\documentclass{article}

\usepackage{amsmath,amsthm,amsopn,amsfonts,graphicx,amssymb,dsfont,color,hyperref}
\usepackage{gensymb} 
\usepackage{textcomp}
\usepackage{mathrsfs}
\usepackage{color}
\usepackage{geometry}
\usepackage{subfigure}





\newtheorem{theorem}{Theorem}[section]
\newtheorem{prop}[theorem]{Proposition}
\newtheorem{defi}[theorem]{Definition}
\newtheorem{lem}[theorem]{Lemma}
\newtheorem{rem}[theorem]{Remark}

\newcommand{\baco}{\left\{ \begin{array}}
\newcommand{\eaco}{\end{array} \right.}

 \newcommand{\ot}{\overline \theta}
\newcommand{\oc}{\overline c}
\newcommand{\ol}{\overline \ell}

\newcommand{\ds}{\displaystyle}
\def\lp {\left( }
\def\rp {\right) }
\def\p{\partial}

\def\R{\mathbb R}
\def\N{\mathbb N}

\def\W{\mathcal W}
\def\opt{\mathcal{O}}

\def\ep{\varepsilon}

\def\j{\mathbf j}
\def\u{\mathbf{u}}

\def\x{\mathbf{x}}
\def\y{\mathbf{y}}
\def\z{\mathbf{z}}
\def\l{\mathbf{l}}
\def\0{\mathbf 0}

\def\tg{\tilde{\gamma}}

\def\xb{\overline{\x}}
\def\yb{\overline{\y}}

\def\k{{\mathbf k}}
\def\w{{\mathbf w}}
\def\1{{\mathbf 1}}

\definecolor{aquamarine}{rgb}{0.13, 0.68, 0.8}

\definecolor{colflo}{rgb}{0.7, 0.5, 1}

\def\Bk{\color{black}}

\title{\bf A host-pathogen coevolution model. Part I:  Run straight for your life 
}

\begin{document}
\maketitle

\begin{center}
{\large \bf Matthieu Alfaro\footnote{Univ. Rouen Normandie, CNRS, LMRS UMR 6085, F-76000 Rouen, France.}, Florian Lavigne\footnote{Univ. Rouen Normandie, CNRS, LMRS UMR 6085, F-76000 Rouen, France.}, and Lionel Roques\footnote{INRAE, BioSP, 84914, Avignon, France.}}
\end{center}

\vspace{15pt}


\vspace{10pt}

\begin{abstract}
In this study, we propose a novel model describing the coevolution between hosts and pathogens, based on a non-local partial differential equation formalism for populations structured by phenotypic traits. Our objective with this model is to illustrate scenarios corresponding to the evolutionary concept of ``Chase Red Queen scenario", characterized by perpetual evolutionary chases between hosts and pathogens. 
First, numerical simulations show the emergence of such scenarios, depicting the escape of the host (in phenotypic space) pursued by the pathogen. We observe two types of behaviors, depending on the assumption about the presence of a phenotypic optimum for the host: either the formation of traveling pulses moving along a straight line with constant speed and constant profiles, or stable phenotypic distributions that periodically rotate along a circle in the phenotypic space. Through rigorous perturbation techniques and careful application of the implicit function theorem in rather intricate function spaces, we demonstrate the existence of the first type of behavior, namely traveling pulses moving with constant speed along a straight line. Just as the Lotka-Volterra models have revealed periodic dynamics without the need for environmental forcing, our work shows that, from the pathogen's point of view, various trajectories of mobile optima can emerge from coevolution with a host species.\\

\noindent{\underline{Keywords:} coevolution, Red Queen hypothesis, integro-differential systems, traveling pulse, perturbation techniques.}\\

\noindent{\underline{AMS Subject Classifications:} 45K05, 35C08, 92D15.}
\end{abstract}

\vspace{3pt}

\section{Introduction}\label{s:intro}

In Lewis Carroll's novel \textit{Through the Looking-Glass} \cite{Car-1872}, the Red Queen says the famous sentence: ``It takes all the running you can do, to keep in the same place". This metaphor was later adopted by Van Valen, see \cite{Van73}, to formulate his evolutionary theory (extensively reviewed in \cite{BroChap14}):
\begin{quote}
    ``The Red Queen does not need changes in the physical environment, although she can accommodate them. Biotic forces provide the basis for a self-driving... perpetual motion of the effective environment and so of the evolution of the species affected by it."
\end{quote}
In the framework of host-pathogen interactions, this theory, known as the ``Red Queen Hypothesis", posits that  pathogens apply evolutionary pressure on hosts to develop resistance, while simultaneously evolving to sustain their infectivity   \cite{Sir-et-al-15}. These interactions can manifest in three distinct scenarios, summarized in \cite{BroChap14}:
\begin{itemize}
    \item Fluctuating Red Queen: this scenario describes how allele frequencies within a population oscillate over time. Predators, parasites, or competitors target the most common genotypes, providing an opportunity for rarer genotypes to flourish. This cyclical pattern ensures genetic diversity is maintained as environmental conditions and selective pressures change \cite{Bell19}. 
    \item Escalatory Red Queen: here, species are engaged in an evolutionary arms race, constantly adapting to outdo each other. Each new adaptation by one species prompts a counter-adaptation by its competitors or predators, leading to a continuous cycle of escalation. This process drives significant evolutionary changes as species strive to surpass one another's adaptations \cite{DawKreb79,NuiRid07,Sas00}.
    \item Chase Red Queen:  this scenario focuses on the evolutionary chase between  pathogens and hosts. Hosts evolve to become less exploitable, while pathogens evolve to counter these adaptations, aiming to reduce the phenotypic gap. This results in a perpetual cycle of adaptation and counter-adaptation, with neither side gaining a lasting upper hand \cite{Gav97,Kopp06}.
\end{itemize}

Several modeling approaches have been proposed to address the coevolutionary dynamics in host-pathogen interactions. These range from simple genetic models focusing on one or two loci to sophisticated simulations incorporating population dynamics, quantitative traits, and complex genetic structures \cite{BucAsh22}. Alongside this, numerous single-species PDE models of populations structured by traits have recently emerged  \cite{AlfCar17,ChaFer06,HamLav20,HamLavRoq20,MirGan20}. These models track the dynamics of the distribution of phenotypic traits described by a vector $\mathbf{x} \in \mathbb{R}^n$ over time, influenced by mutations and selection. Most of these models assume a unique phenotypic optimum, with fitness decreasing as one deviates from this optimum, following the paradigm of Fisher's Geometrical Model. Following this paradigm, a first objective of this work is to develop and analyze a host-pathogen coevolution PDE model for asexual populations structured by traits, aiming to encapsulate the principal aspects of their interactions. We particularly focus on the emergence of the Chase Red Queen scenario with this formalism. Another goal is to justify the existence of a shifting phenotypic optimum over time from the pathogen's perspective. Indeed, several studies have recently focused on analyzing single-species PDE models with a shifting phenotypic optimum that either moves at a constant speed \cite{AlfBer17,CalHen22}, fluctuates periodically \cite{CarNad20,FigMir19,LorChi15}, or exhibits general dynamics \cite{Lav23,RoqPat20}. In all these works, the trajectory of the phenotypic optimum was a given. Our goal is to demonstrate that, just as the Lotka-Volterra models have revealed periodic dynamics without the need for environmental forcing, these various trajectories of mobile optima can emerge from coevolution with a host species.

\section{A new host-pathogen coevolution model}\label{s:model}

\paragraph{Demographic model.} 
We model the dynamics of the host population size using a logistic growth term, complemented by a Lotka-Volterra-like term to represent the impact of the pathogen on the host compartment, 
\[
\frac{dH}{dt} = r_H H - \gamma_H H^2 - \rho HP, \quad t > 0,
\]
where $H(t)$ and $P(t)$ respectively denote the host and pathogen population sizes at time $t$. Here, $r_H$ represents the host's intrinsic growth rate, $\gamma_H > 0$ is a constant measuring the effect of intraspecific competition, and $\rho > 0$ quantifies the pathogen's impact on the host growth rate. On the other hand, the pathogen population grows at a rate $r_P > 0$ but is constrained by the number of hosts, implying that the carrying capacity for $P$ is determined by $\frac{r_P H(t)}{\gamma_P}$, as follows:
\[
\frac{dP}{dt} = r_P P - \gamma_P \frac{P^2}{H}, \quad t > 0.
\]
For any initial data $H(0)>0$, $P(0)>0$, the long time behavior is coexsitence of both populations:
\[
    \lim_{t \to +\infty} P(t) = \frac{r_H r_P}{\gamma_H \gamma_P + \rho r_P} \quad \text{and} \quad \lim_{t \to +\infty} H(t) = \frac{r_H \gamma_P}{\gamma_H \gamma_P + \rho r_P}.
    \]

\paragraph{Phenotypically structured population model.} We now extend the model to include that both the host and pathogen populations are structured by their respective phenotypes, denoted as $\mathbf{x} = (x_1, \dots, x_m) \in \mathbb{R}^m$ for the host and $\mathbf{y} = (y_1, \dots, y_n) \in \mathbb{R}^n$ for the pathogen. Here, each coordinate $x_i$ for the host (and correspondingly, $y_j$ for the pathogen) represents a distinct biological trait. For simplicity, we posit that the number of biological traits of interest is the same for both the host and the pathogen,  that is
\[
n = m.
\]

Let $ h(t,\mathbf x)$ be the  density  of hosts with phenotype $\mathbf x\in \R^n$  and $ p(t, \mathbf y)$  the density  of pathogens with phenotype $\mathbf y \in \R^n$ at time $t\ge 0$. The coevolutionary dynamics of the host and pathogen populations are described by the system
\begin{equation}  \label{sys:initial}
    \left\{\begin{array}{ll}
        \ds  \p_t h=\mu_H^2 \Delta_\x h +r_H[h,\x] h-\gamma_H H h - \rho[p,\x]  h P, & \quad t>0,\,  \mathbf x \in \R^n, \vspace{7pt}\\
         \ds \p_t p = \mu_P^2 \Delta_\y p + r_P[h,\y]p - \gamma_P \, \frac{P}{H}\, p, & \quad t>0,\,  \mathbf y\in \R^n,
    \end{array}\right.
\end{equation}
where  $H=H(t)$ and $P=P(t)$ are the total populations of hosts and pathogens, that is
\[
 H(t):= \int_{\R^n} h(t,\mathbf x) \, \mathrm d\mathbf x\qquad  \hbox{ and }\qquad  P(t):= \int_{\R^n } p(t,\mathbf y)\, \mathrm d\mathbf y.
\]
The Laplacian terms in \eqref{sys:initial} model the effects of mutations, that act on $\mathbf x$ for the hosts and on $\mathbf y$ for the pathogens. The coefficients $\mu_H^2$ and $\mu_P^2$ measure the intensity of the mutations (i.e., their strength and rate, under a weak selection strong mutation assumption, see the Appendix in \cite{HamLav20}).

By formally integrating \eqref{sys:initial} over $\mathbb{R}^n$, we obtain a system that is closely related to our initial ODE system, namely
\begin{equation}  \label{sys:ODE2}
\left\{\begin{array}{l}
        \ds \frac{dH}{dt} = \overline{r}_H(t) H - \gamma_H H^2 - \overline{\rho}(t) HP,  \vspace{7pt}\\
         \ds \frac{dP}{dt} = \overline{r}_P(t) P - \gamma_P \frac{P^2}{H}, 
    \end{array}\right.
\end{equation}
where $\overline{r}_H(t)$ represents the mean fitness of the host population, $\overline{r}_P(t)$ denotes the mean fitness of the pathogen population, and $\overline{\rho}(t)$ is the mean value of $\rho[p,\x]$ within the host population.

\paragraph{Fitness functions.} To define the fitness functions $r_H$ and $r_P$, we draw inspiration from Fisher's Geometric Model. More precisely, each fitness function has a unique maximum and decreases quadratically from it. 

The function $r_H[h,\mathbf{x}]$ describes the fitness of the hosts with phenotype $\mathbf{x}$ in the absence of pathogens:
\begin{equation}
    r_H[h,\x]=R_H-\alpha_H^2 \|\x\|^2-\beta^2 \|\x-\xb(t)\|^2, 
\label{def:rH}
\end{equation}
with $R_H>0$ the fitness of the optimal phenotype. When $\alpha_H > 0$, the term $\alpha_H \|\mathbf{x}\|^2$ indicates that, all else being equal, phenotypes $\mathbf{x}$ close to $\mathbf{0}$ tend to have higher fitnesses. As for the less usual term involving $\beta$ and the mean host phenotype
\begin{equation}
    \xb(t) := \frac{1}{H(t)} \int_{\R^n} \x\, h(t,\x)\, \mathrm d\x,
\label{def:xb}
\end{equation}
it can be considered as a ``concerted evolution'' term, modeling selection around the mean phenotype, thus preventing excessive variance in the distribution of $h(t,\mathbf{x})$ over time. We will see later that, for the model \eqref{sys:initial} presented here, this term plays a crucial role in achieving trajectories that describe a Chase Red Queen scenario.

For simplicity, we assume that the pathogen phenotype optimum only depends on the host phenotype distribution through the mean host phenotype $\xb(t)$: 
\begin{equation}
    r_P[h,\y]=R_P-\alpha_P^2 \|\y-\opt(\xb(t))\|^2,
\label{def:rP}
\end{equation}
where $R_P>0$ represents the fitness of the optimal phenotype, $\alpha_P >0$ is the selection pressure on pathogens, and $\opt:\R^n\to \R^n$ is a function to be specified, describing how the pathogen phenotype optimum is influenced by the mean host phenotype.

The second equation in \eqref{sys:initial} is thus recast
\begin{equation}
\p_tp=\mu_P^2\Delta_\y p+\left(R_P-\alpha_P^2\|\y-\opt(\xb(t))\|^2 \right)p-\, \gamma_P \, \frac{P}{H}\, p,\quad t>0,\, \y\in \R^n.
    \label{eq:opt_mob}
\end{equation}
If $\opt(\xb(t))$ were independent of hosts and defined as a given function $\opt (t)$, \eqref{eq:opt_mob} could be considered as a moving optimum problem. As mentioned above, this type of problem has been studied for several types of environmental fluctuations. For instance, the constant speed assumption in \cite{AlfBer17,CalHen22}, corresponds to  $\opt(t)=\opt_0+ct\u$ for some speed $c$ and unit vector $\u\in \R^n$. In previous studies, periodicity was considered along a line: $\opt(t)= f(t)\u$ for some scalar periodic function $f$, see  \cite{CarNad20,FigMir19,LorChi15,RoqPat20}. We will explore here how this type of mobile optimum, or generalizations where the optimum does not necessarily remain on a line, can emerge from interactions with the host.

\paragraph{Pathogen's impact.}  The function $\rho[p,\x]$  measuring the strength of the impact of the pathogen on the host in the first equation of \eqref{sys:initial} has to remain positive so we use a Gaussian profile, namely
\begin{equation}
    \rho[p,\x]=\rho_{\max} \, e^{-\, \theta \|\x-\W(\yb(t))\|^2},
\label{def:rho}
\end{equation}
where $\rho_{\max}>0$ is the maximal impact, $\theta>0$ and $\W:\R^n\to \R^n$ a function describing the host ``worst" phenotype, which is the most sensitive to the pathogen. We assume that it only depends on the mean pathogen phenotype $\yb(t)$:
\begin{equation}
    \yb(t):=\frac{1}{P(t)} \int_{\R^n}\y \ p(t,\y)\ \mathrm d \y.
    \label{def:yb}
\end{equation}

\paragraph{The full model.}  
For biological realism, host and pathogen phenotypes should not be comparable except through complex functions $\opt$ and $\W$. Nevertheless, for simplicity and to allow for a rigorous mathematical study, we assume here that the pathogen's phenotypic optimum is a translation of the host's mean phenotype, while the worst phenotype for the host corresponds to the mean phenotype of the pathogen.
Thus, from now on, for $\ell\geq  0$ and a given unit vector $\u$, we choose
$$
\opt(\x)=\x-\ell\u, \quad \W(\y)=\y,
$$
and we focus on
\begin{equation}  \label{sys:full}
    \left\{\begin{array}{ll}
        \ds \p_t h=\mu_H^2 \Delta_\x h +\left(R_H-\gamma_H H -\alpha_H^2 \|\x\|^2-\beta^2\|\x-\xb(t)\|^2- P\rho_{\max} \, e^{-\, \theta \|\x-\yb(t)\|^2} \right)  h , \vspace{7pt} \\
        \ds \p_tp = \mu_P^2 \Delta_\y p + \left(R_P- \gamma_P \, \frac{P}{H}-\alpha_P^2 \|\y+\ell\u -\xb(t)\|^2\right)p.
    \end{array}\right.
\end{equation}
In the present work, we focus on solutions corresponding to a demographic equilibrium,  that is $H(t)=cste=H$ and $P(t)=cste=P$.

\paragraph{Organization of the paper.} System \eqref{sys:full} is the starting point of the present work. We aim at performing a rigorous analysis of  such nonlocal PDE systems, thus shedding light on the aforementioned biological scenarii.

We start with some numerical explorations of \eqref{sys:full} in Section \ref{s:num}, revealing different outcomes depending on parameters $\alpha_H$ and $\beta$. In Section \ref{s:static}, we consider the case $\beta=0$ and construct some steady state solutions. This suggests that this cannot serve as a model for the Chase Red Queen scenario. In Section \ref{s:pursuit}, we consider the case with aggregation $\beta>0$, but with $\alpha_H=0$, and start the construction of solutions having the form of two traveling pulses, the pathogen distribution tracking the escaping host distribution. The actual construction is achieved in Section \ref{s:proof-th}. It relies on perturbation techniques in rather intricate ad-hoc function spaces, on refined estimates for the eigenelements of the multivariate harmonic oscillator gathered in Appendix \ref{A:linear}, and a technical lemma on series involving binomial coefficients in Appendix \ref{B:proof-taupe}. This reveals that this acts as a model for the Chase Red Queen scenario without adding any external environmental force.  A short discussion is also presented in Section \ref{s:discussion}, between the setting of our main result, namely Theorem \ref{th:lin_pursuit}, and its proof in Section \ref{s:proof-th}, Appendix \ref{A:linear} and Appendix \ref{B:proof-taupe}.

\section{Numerical exploration of the possible outcomes}\label{s:num}

In this section, we will verify that model \eqref{sys:full} is capable of describing situations corresponding to the Chase Red Queen scenario, that is, situations where the mean phenotypes of the host $\xb(t)$ and the pathogen $\yb(t)$ do not converge but instead engage in a form of perpetual chase.

In all cases, we operate in two dimensions ($n=2$) and set $\ell=0$. The numerical resolution relies on a method of lines (a combination of a finite difference method for spatial discretization and the Runge-Kutta method for time integration), which easily handles non-local terms. The programs are written in Python, and are available in a \href{https://doi.org/10.17605/OSF.IO/W6FGV}{Jupyter notebook} and can also be executed on \href{https://colab.research.google.com/drive/1AJpZuQJEUQ8ESCWUBFFZCe71w0_CNgTx?usp=sharing}{Google Colab}. The parameter values are $\mu_H^2=\mu_P^2=0.1$, $R_H=4$, $R_P=1$, $\gamma_H=1$, $\gamma_P=0.01$, $\rho_{\max}=0.1$, $\theta=1$ and $\alpha_P=1$. The initial distributions of $h$ and $p$ have initial mass $10$ and are respectively concentrated at $\x_0=(0.5,0.5)$ and $\y_0=(0.7,0)$.

\paragraph{Model without aggregation term ($\beta=0$), Figure~\ref{fig:z}.}

We first consider the case $\alpha_H=0$ and $\beta=0$, meaning that, in the absence of pathogen, there is no phenotypic optimum for the host (all phenotypes have the same fitness). In this case, we observe that the host density appears to form a ring that diffuses towards infinity. Since the optimum for the pathogen is $\opt(\xb(t)) = \xb(t)$, the pathogen distribution tends to concentrate around $\xb(t)$ (which slightly deviates from its initial position). The hosts with the worst phenotype $\W(\yb(t)) = \yb(t)$ are therefore those whose phenotype is close to $\xb(t)$. Selection thus enables the host to avoid the pathogen by forming a ring-shaped distribution. This ring diffuses towards infinity by mutation, reducing the effect of the pathogen on the host, without diminishing the fitness of the pathogen.

When $\alpha_H>0$, the position $\mathbf{0}$ corresponds to a phenotypic optimum for the host in the absence of pathogen. We observe the same phenomenon as before, namely the formation of a ring. The essential difference from the previous case lies in the fact that $\xb(t)$ tends towards $\mathbf{0}$, and the ring tends to stabilize instead of diffusing to infinity, as hosts must find a compromise between being near $\mathbf{0}$ and away from the pathogens, which concentrate around $\xb(t)$, which itself tends towards $\mathbf{0}$.

Thus, under these assumptions, the model does not reproduce the Chase Red Queen scenario, and above all, it lacks realism: artificially, the pathogen maintains a high fitness by being close to $\xb(t)$, while the host distribution, in the form of a ring, avoids the position $\xb(t)$. The pathogen thus lives on hosts that do not even exist. In Section~\ref{s:static}, we offer an analysis of this model, which confirms that it does not capture the Chase Red Queen Scenario.

A more complex model, involving the entire distributions of $h$ and $p$ in the interactions between the two species, rather than just the mean, might not have this problem. However, its mathematical analysis is out of reach. Hence the introduction of the aggregation term with $\beta > 0$.

\begin{figure}
   \centering
   \begin{minipage}[c]{0.4\textwidth}
       \centering
       \includegraphics[scale=0.25]{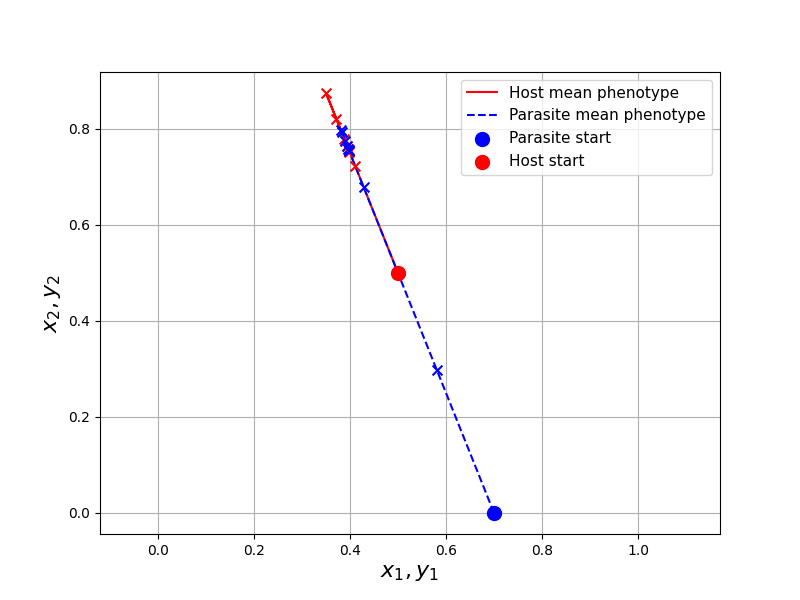} 
       \includegraphics[scale=0.25]{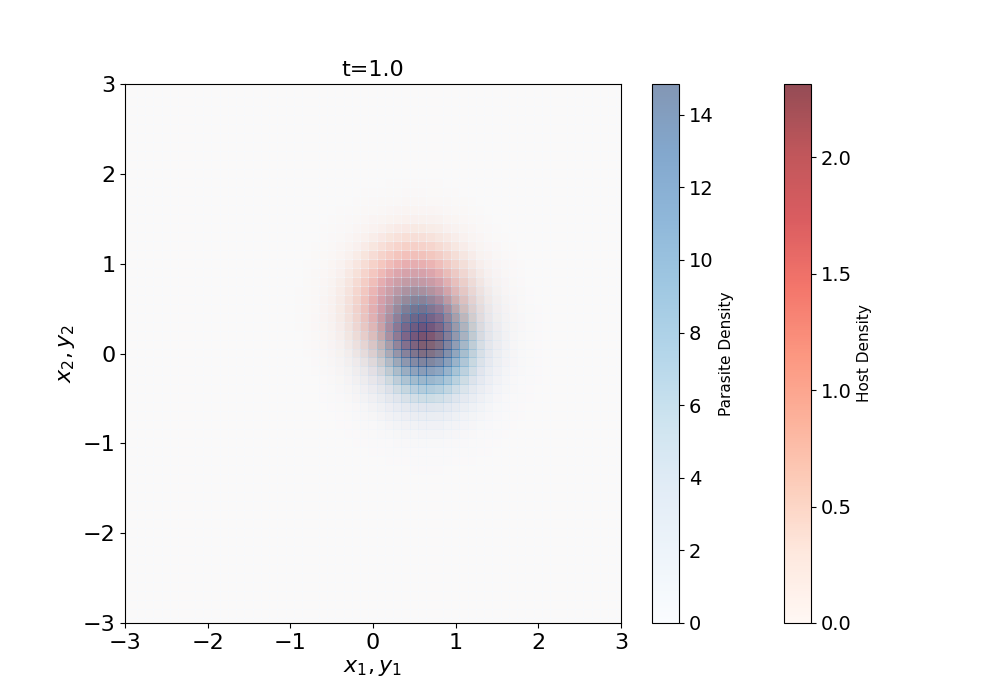}
       \includegraphics[scale=0.25]{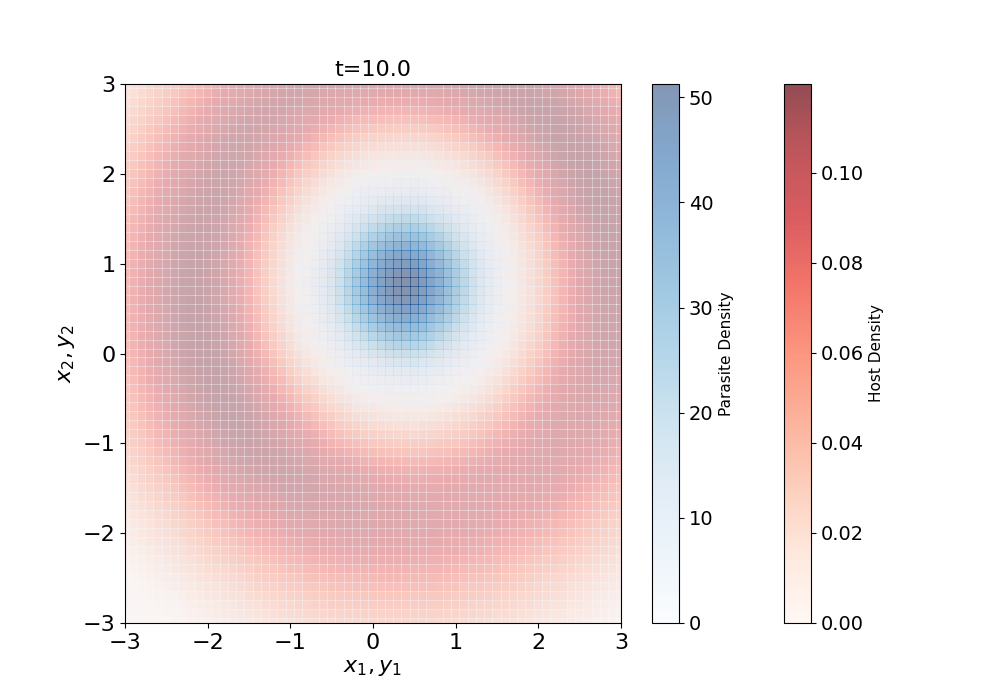}
       \includegraphics[scale=0.25]{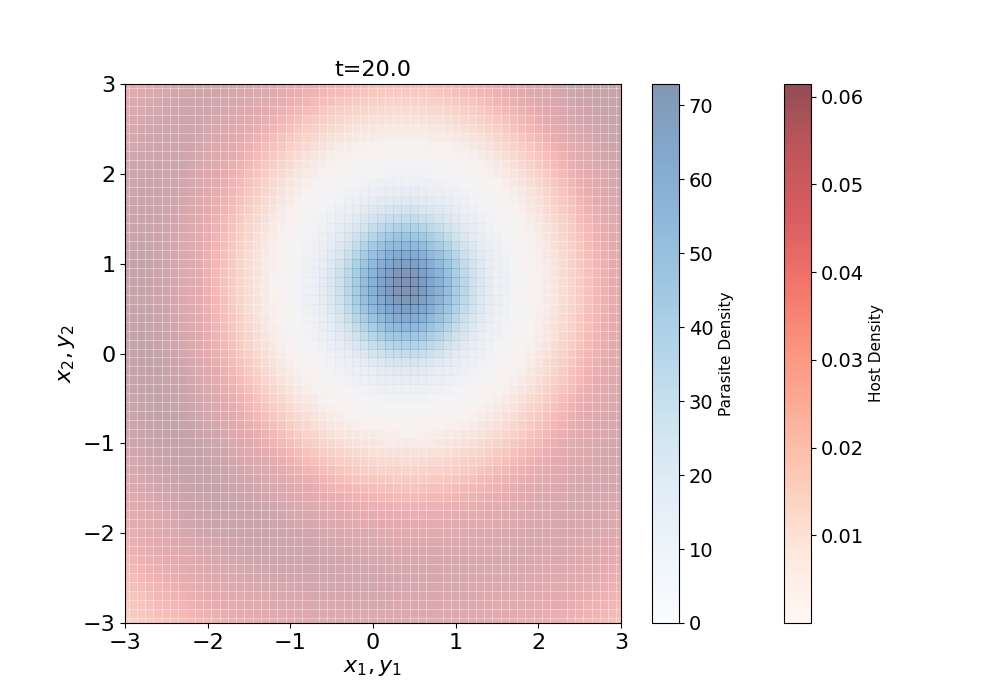}
   \end{minipage}
   \begin{minipage}[c]{0.4\textwidth}
       \centering
       \includegraphics[scale=0.25]{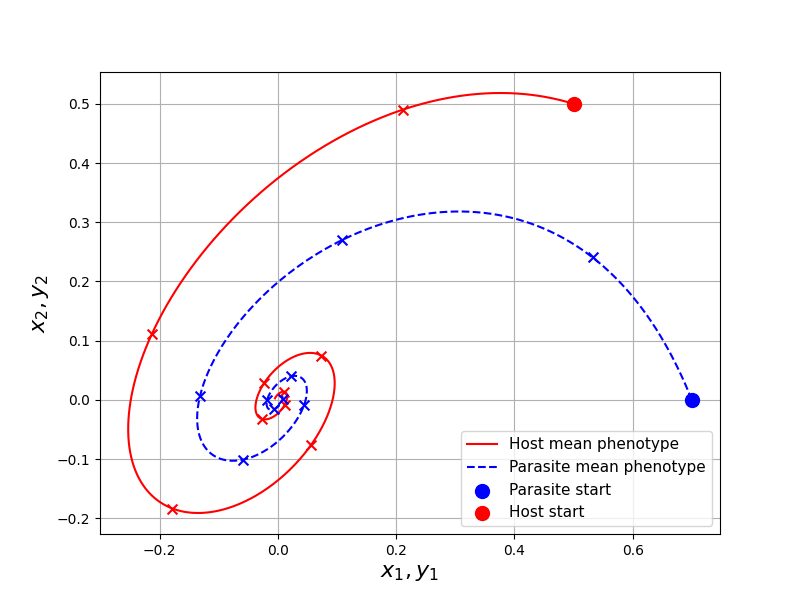}
       \includegraphics[scale=0.25]{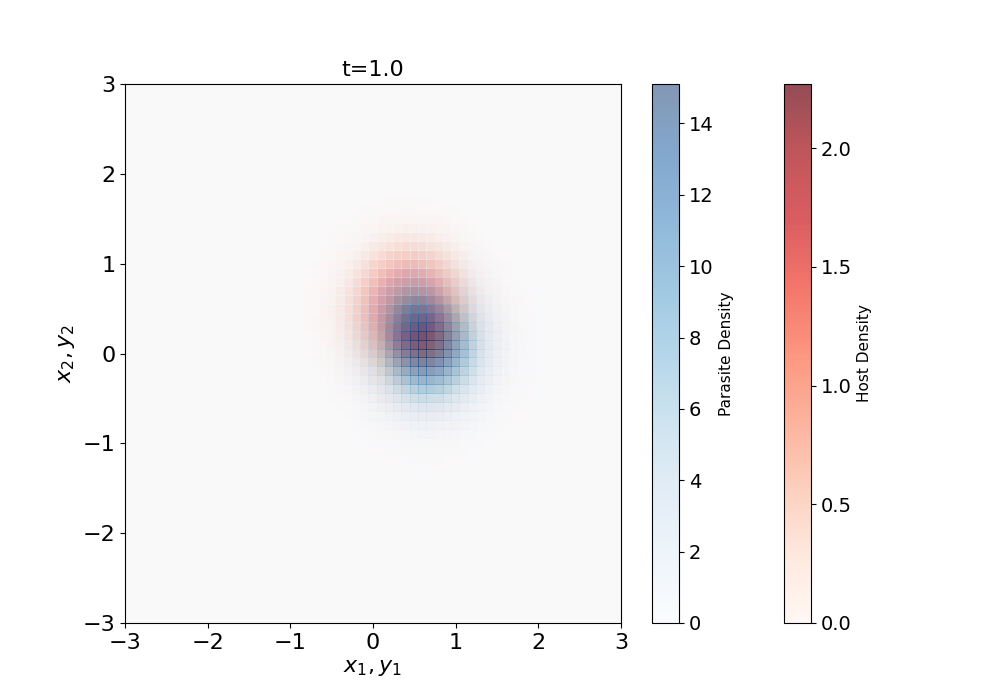}
       \includegraphics[scale=0.25]{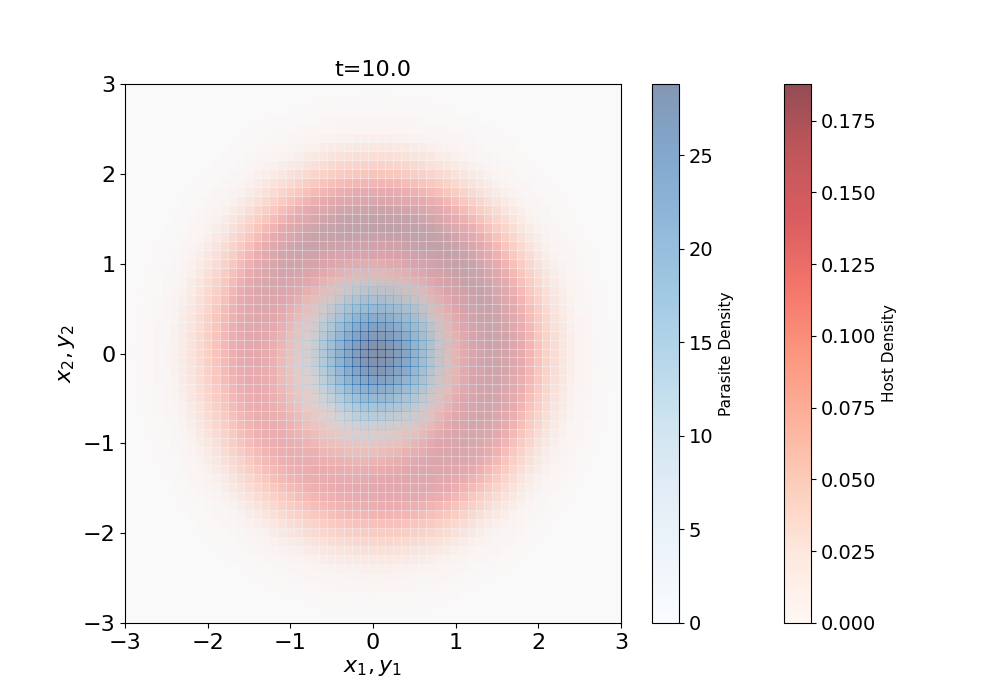}
       \includegraphics[scale=0.25]{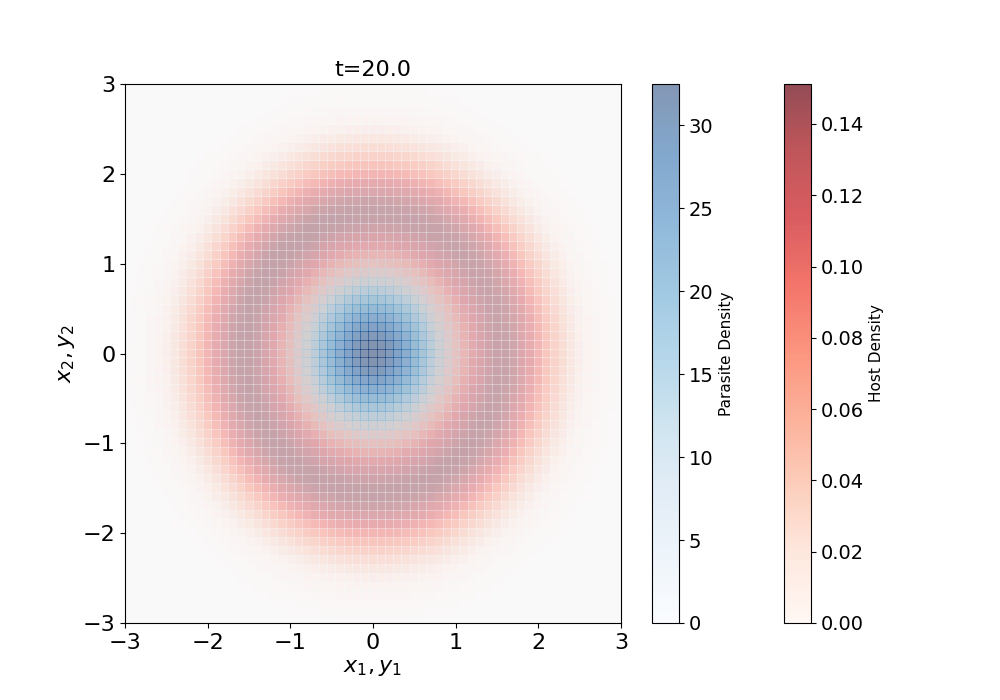}
   \end{minipage}
   \caption{\textbf{Model without aggregation term  ($\beta= 0$)}. Left column: model without phenotype optimum for the host ($\alpha_H=0$); Right column: model with a phenotype optimum $\x=0$ for the host ($\alpha_H=0.5$). The first row represents the trajectories of the mean phenotypes $\xb(t)$ and $\yb(t)$ for $t \in (0,20)$; the crosses correspond to the positions of the mean phenotypes at successive times, going by twos. The subsequent rows represent snapshots of the distributions $h(t,\x)$ and $p(t,\y)$ at successive times $t=1,\,10, \, 20.$ }
   \label{fig:z}
\end{figure}

\paragraph{Model with aggregation term ($\beta>0$), Figure~\ref{fig:n}.}
Once again, we begin by analyzing the case where $\alpha_H=0$ (no phenotypic optimum for the host). The mean phenotypes of the host and pathogen, $\xb(t)$ and $\yb(t)$, appear to move at a constant speed along a straight line defined by the initial phenotypes $\x_0$ and $\y_0$. The host density is bean-shaped, while the pathogen density remains Gaussian-shaped and follow the host density with a constant lag. These two densities seem to move at a constant speed along the line $(\x_0,\y_0)$  and with constant profiles, thus forming a couple of  traveling pulses. Thus, we have indeed achieved a scenario akin to the Chase Red Queen. Furthermore, this model also appears to show that a mobile optimum $\opt(t) = \opt_0 + ct \u$, as in \cite{AlfBer17,CalHen22,RoqPat20},  can naturally emerge from interactions between hosts and pathogens. We provide a rigorous analysis of this model and prove the existence of traveling pulses in Sections~\ref{s:pursuit} and \ref{s:proof-th}.

When $\alpha_H > 0$, the fitness of the host decays for phenotypes $\x$ away from the position $\mathbf{0}$. As a result, host densities can no longer shift to infinity to escape pathogens. We observe here that the mean phenotypes of the host and pathogen, $\xb(t)$ and $\yb(t)$, converge towards circular trajectories, rotating around $\mathbf{0}$ with a constant radial speed and a constant lag between the host and the pathogen. The host density is again bean-shaped, while the pathogen density is Gaussian-shaped. They form a pair of generalized traveling pulses, moving at constant speed along a circle. Thus, we again encounter a Chase Red Queen scenario, but this time with cyclic trajectories. In this case, from the pathogen's perspective, periodic trajectories for $\opt(t)$ emerge, as in \cite{CarNad20,FigMir19,LorChi15,RoqPat20},  but along a circle (these works only considered optima moving along a straight line). The mathematical analysis of the model under these assumptions will be the subject of a future study.

\begin{figure}
   \centering
   \begin{minipage}[c]{0.4\textwidth}
       \centering
       \includegraphics[scale=0.25]{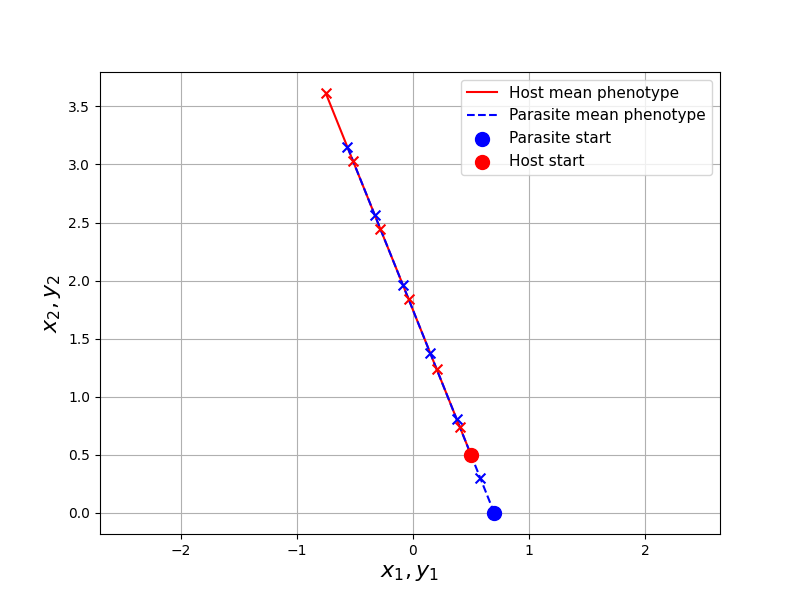}
       \includegraphics[scale=0.25]{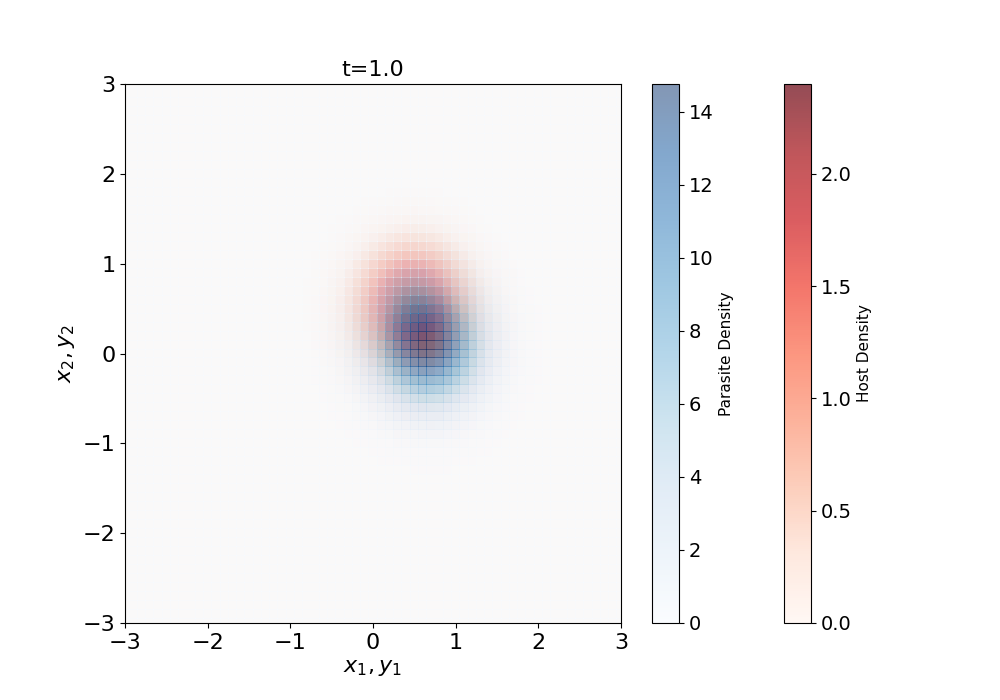}
       \includegraphics[scale=0.25]{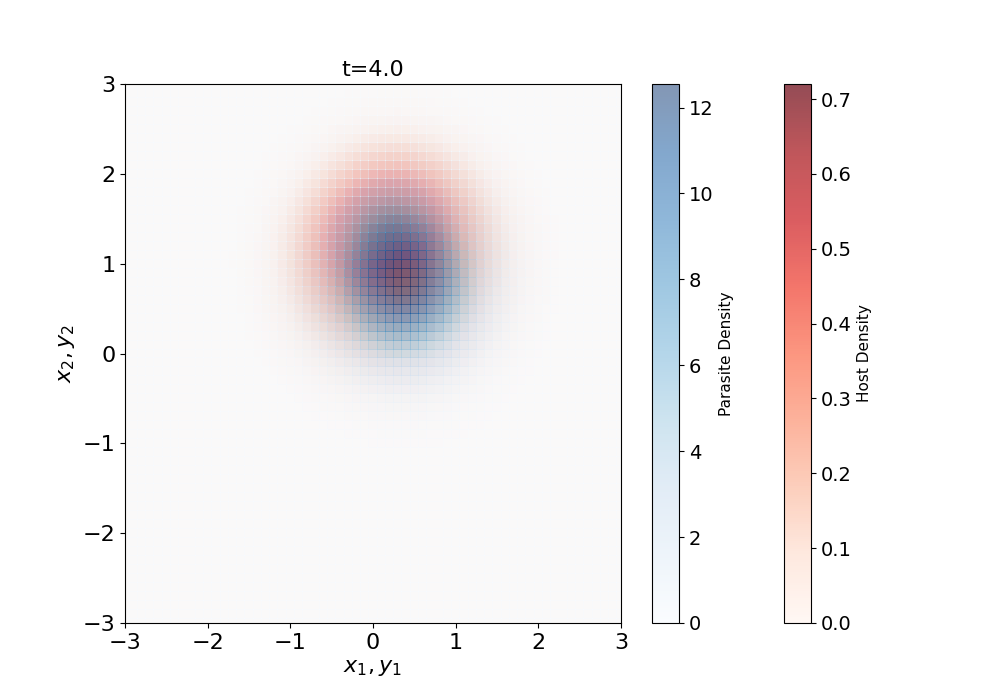}
       \includegraphics[scale=0.25]{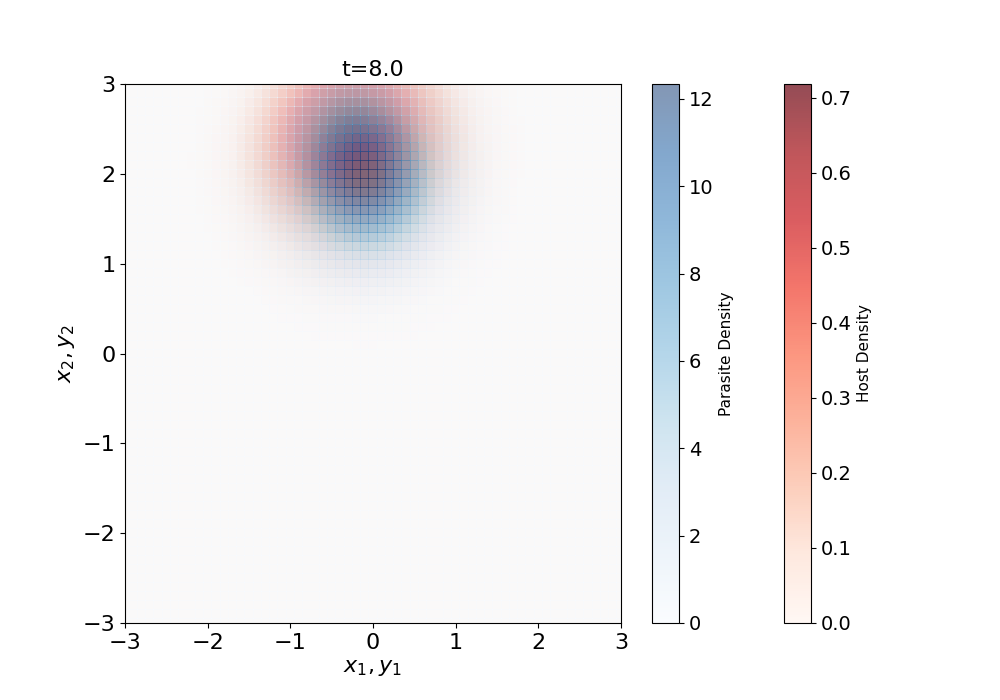}
   \end{minipage}
   \begin{minipage}[c]{0.4\textwidth}
       \centering
       \includegraphics[scale=0.25]{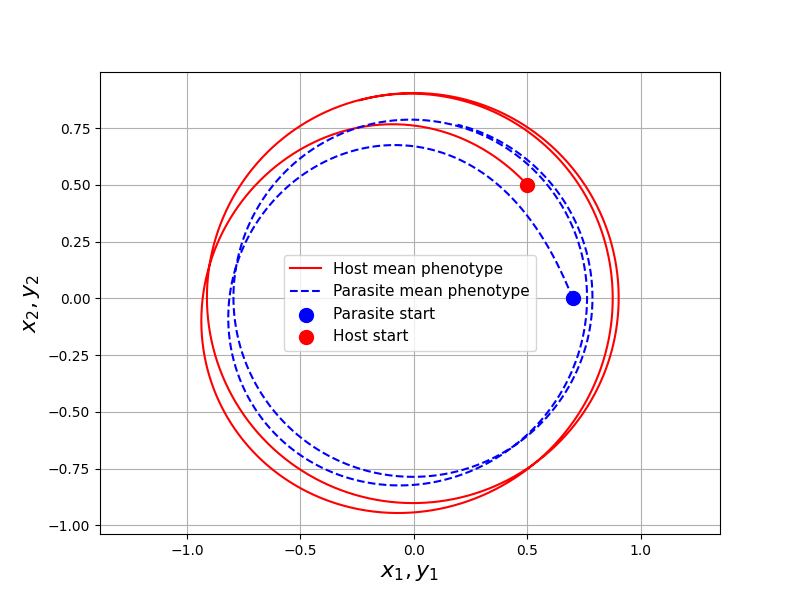}
       \includegraphics[scale=0.25]{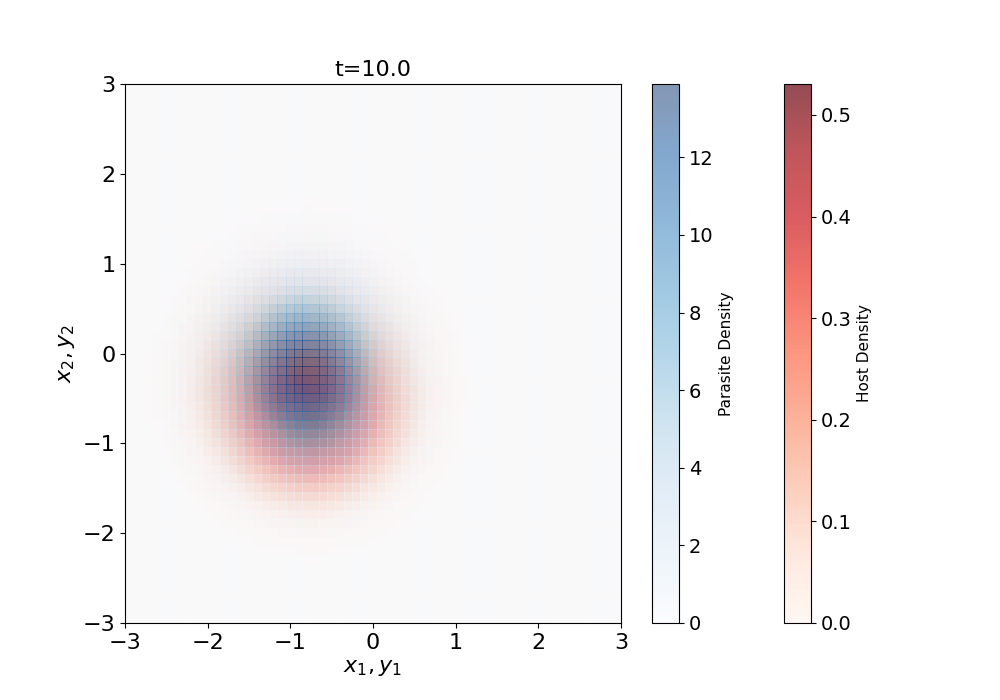}
       \includegraphics[scale=0.25]{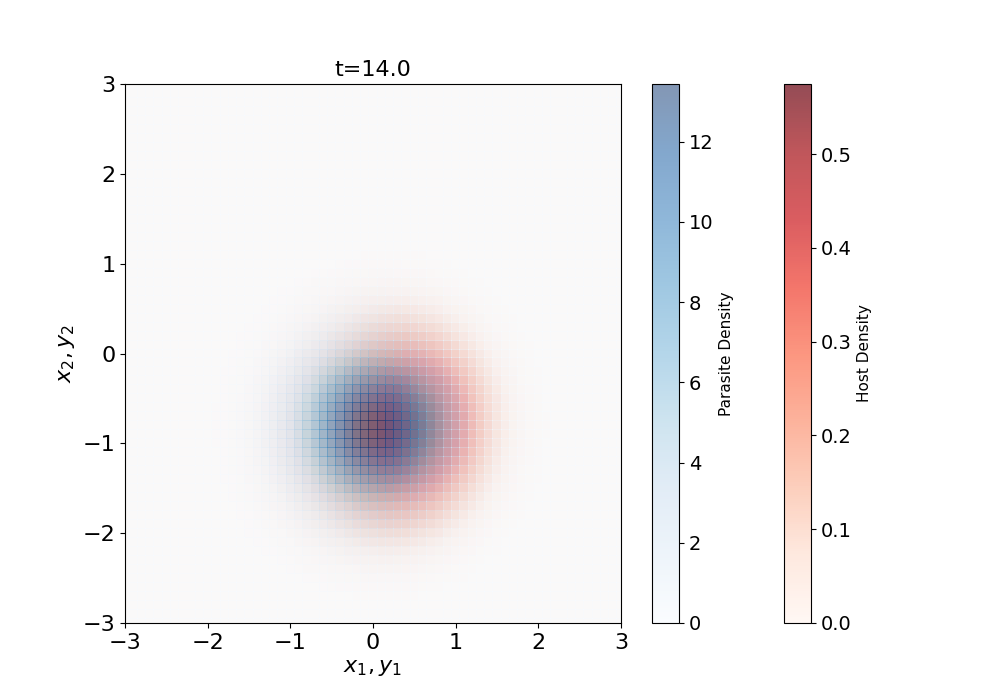}
       \includegraphics[scale=0.25]{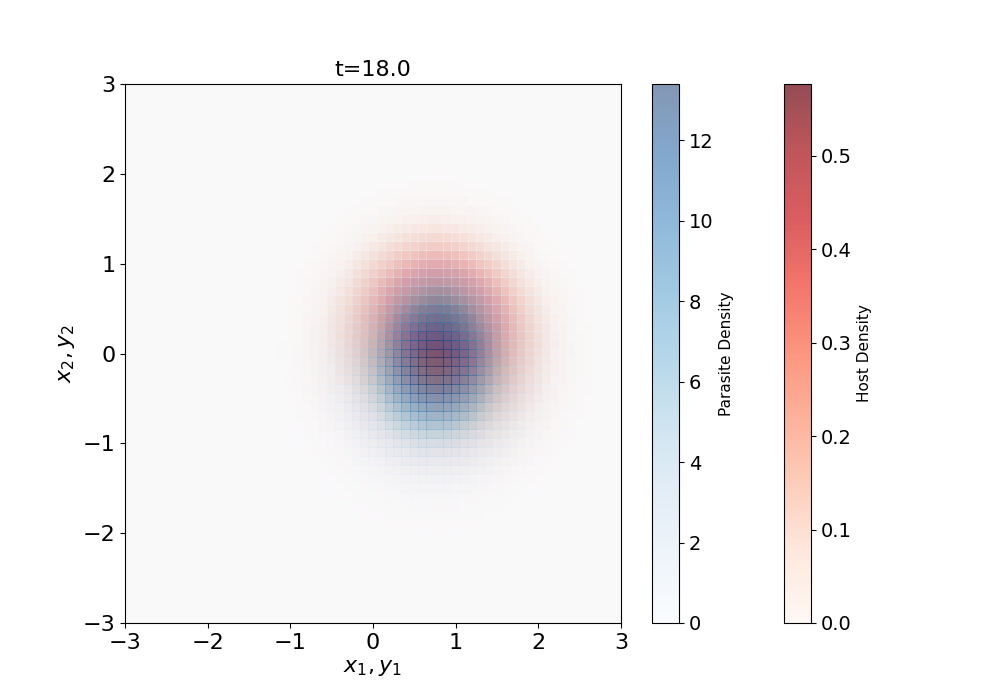}
   \end{minipage}
   \caption{\textbf{Model with aggregation term  ($\beta= 1$)}. Left column: model without phenotype optimum for the host ($\alpha_H=0$); Right column: model with a phenotype optimum $\x=0$ for the host ($\alpha_H=0.2$). The first row represents the trajectories of the mean phenotypes $\xb(t)$ and $\yb(t)$ for $t \in (0,12)$ (left) or $t \in (0,40)$ (right); the crosses correspond to the positions of the mean phenotypes at successive times, going by twos. The subsequent rows represent snapshots of the distributions $h(t,\x)$ and $p(t,\y)$ at different times.}
   \label{fig:n}
\end{figure}

\section{Model without aggregation term  \texorpdfstring{($\alpha_H>0$, $\beta=0$, $\ell=0$)}{}}\label{s:static}

In this section we focus on \eqref{sys:full} in the special case $\beta=0$.  If $\ell>0$, one would need to use a perturbation approach as performed in  the much more complex case of Section \ref{s:pursuit}. Since our goal is here to confirm the numerical evidence that $\beta=0$ is not adequate to capture the Chase Red Queen scenario, we assume $\ell=0$. We thus focus on
\begin{equation}  \label{sys:static}
    \left\{\begin{array}{ll}
        \ds \p_t h=\mu_H^2 \Delta_\x h +\left(R_H-\gamma_H H -\alpha_H^2 \|\x\|^2- P\rho_{\max} \, e^{-\, \theta \|\x-\yb(t)\|^2}\right) h  , & \quad t>0,\,  \mathbf x \in \R^n, \vspace{7pt} \\
        \ds \p_tp = \mu_P^2 \Delta_\y p + \left(R_P- \gamma_P \, \frac{P}{H}-\alpha_P^2 \|\y -\xb(t)\|^2\right) p, & \quad t>0,\,  \mathbf y\in \R^n,
    \end{array}\right.
\end{equation}
where all parameters are positive. In the sequel, we are concerned with the following particular solutions. 

\begin{defi}[Stationary solution]\label{def:stationary} A stationary solution of \eqref{sys:static} is a quadruplet $(H,P,\varphi,\psi)$ with $H>0$ (the host population size), $P>0$ (the pathogen population size), $\varphi$ and $\psi$ two positive probability densities on $\R^n$, that is
\begin{equation}
    \label{profiles-stat-1}
\varphi > 0,\quad  \psi > 0,\quad \int _{\R^n} \varphi(\x)\,\mathrm d\x=\int _{\R^n} \psi(\y)\,\mathrm d\y=1,
\end{equation}
with zero means
\begin{equation}
    \label{profiles-stat-2}
 \int _{\R^n} \x\varphi(\x)\,\mathrm d\x=\int _{\R^n} \y\psi(\y)\,\mathrm d\y=\0 ,
\end{equation}
such that the couple $ (h(t,\x),p(t,\y))=(H\varphi(\x),P\psi(\y))$  solves \eqref{sys:static}.
\end{defi}

\begin{theorem}[Stationary state]\label{th:stat}
A stationary solution of \eqref{sys:static} exists if and only if
    \begin{equation}\label{cond-stationary}
        R_P>n\mu_P\alpha_P \qquad \hbox{ and } \qquad R_H>n\mu_H\alpha_H.
    \end{equation}
    If \eqref{cond-stationary} holds, the stationary state is unique, radial, $\psi$ is explicit
    \begin{equation}\label{psi-explicit}
  \psi(\y)=\left(\frac{\alpha_P}{2 \pi\mu_P} \right) ^{n/2}\exp\left(-\, \frac{\alpha_P}{2 \mu_P}\, \|\y \|^2\right),
   \end{equation}
    and we have the relation $R_P-\gamma_P \, \frac{P}{H}=n\mu_P\alpha_P$. 
\end{theorem}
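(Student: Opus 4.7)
The plan is to exploit the zero-mean assumption to decouple the two equations, reducing the problem to two scalar Schrödinger-type eigenvalue problems whose coupling is only through the two positive constants $H$ and $P$.

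First I would observe that, for a stationary solution in the sense of Definition \ref{def:stationary}, the conditions in \eqref{profiles-stat-2} force $\xb(t)\equiv \0$ and $\yb(t)\equiv \0$. Plugging this into \eqref{sys:static} and cancelling $H$ and $P$, the stationary profiles must satisfy
\begin{equation*}
\mu_H^2\Delta \varphi +\bigl(R_H-\gamma_H H -\alpha_H^2\|\x\|^2-P\rho_{\max}\,e^{-\theta\|\x\|^2}\bigr)\varphi=0,
\end{equation*}
\begin{equation*}
\mu_P^2\Delta \psi +\bigl(R_P-\gamma_P P/H-\alpha_P^2\|\y\|^2\bigr)\psi=0.
\end{equation*}
The second equation is a pure harmonic oscillator and is the cleanest: it requires $R_P-\gamma_PP/H$ to be the principal eigenvalue of $-\mu_P^2\Delta+\alpha_P^2\|\y\|^2$ with positive eigenfunction. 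By the spectral theory of the multivariate harmonic oscillator (gathered in Appendix \ref{A:linear}), this principal eigenvalue is $n\mu_P\alpha_P$ and the unique associated positive, unit-mass, radial (hence zero-mean) eigenfunction is the Gaussian in \eqref{psi-explicit}. This forces
\[
R_P-\gamma_P P/H=n\mu_P\alpha_P,\qquad P/H=k:=(R_P-n\mu_P\alpha_P)/\gamma_P,
\]
which requires the first condition in \eqref{cond-stationary} and determines the ratio $P/H$.

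Next I would treat the host equation as a nonlinear eigenvalue problem in the single unknown $H$, using $P=kH$. Denote by $\lambda(P)$ the principal eigenvalue of the Schrödinger operator
\[
L_P:=-\mu_H^2\Delta+\alpha_H^2\|\x\|^2+P\rho_{\max}\,e^{-\theta\|\x\|^2}
\]
on $\R^n$. Since the potential is confining, continuous and bounded below, classical spectral theory provides a simple ground state eigenvalue $\lambda(P)$ with a unique (up to scaling) positive eigenfunction $\varphi_P$; moreover $\varphi_P$ is radial because $L_P$ commutes with rotations, so the zero-mean condition is automatic. The min-max principle shows that $P\mapsto\lambda(P)$ is continuous and strictly increasing, with $\lambda(0)=n\mu_H\alpha_H$ (again by Appendix \ref{A:linear}) and $\lambda(P)\to+\infty$ as $P\to+\infty$. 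The stationary host equation then amounts to
\[
f(H):=\gamma_H H+\lambda(kH)=R_H.
\]
Since $f$ is continuous and strictly increasing on $[0,+\infty)$ with $f(0)=n\mu_H\alpha_H$ and $f(+\infty)=+\infty$, the equation admits a (unique) solution $H>0$ iff $R_H>n\mu_H\alpha_H$, which is exactly the second condition in \eqref{cond-stationary}. The normalized eigenfunction $\varphi:=\varphi_{kH}/\|\varphi_{kH}\|_{L^1}$ then satisfies \eqref{profiles-stat-1}--\eqref{profiles-stat-2}, and $P=kH$ yields the pathogen size.

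The uniqueness part follows from each step: the ratio $P/H$ is uniquely determined by the pathogen equation, $\psi$ is the unique positive Gaussian ground state, $H$ is the unique root of the monotone equation $f(H)=R_H$, and $\varphi$ is the unique positive unit-mass ground state of $L_{kH}$. The only non-routine point is the detailed spectral analysis of $L_P$, in particular the identity $\lambda(0)=n\mu_H\alpha_H$, the strict monotonicity of $\lambda(\cdot)$, and the radial symmetry and uniqueness of the principal eigenfunction in the non-compact setting $\R^n$; I expect this to be the main step requiring care, but it is a direct consequence of the material on the multivariate harmonic oscillator and Perron--Frobenius-type arguments that Appendix \ref{A:linear} is designed to supply.
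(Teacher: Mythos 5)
Your proof is correct and takes essentially the same route as the paper: you reduce to the same decoupled elliptic system, identify $\psi$ and the ratio $P/H$ from the harmonic-oscillator ground state, and close the existence/uniqueness argument via monotonicity (with respect to a scalar parameter) of the ground-state energy of the host Schr\"odinger operator together with the intermediate value theorem. The differences are only cosmetic: you parametrize by $H$ via $P=kH$ whereas the paper parametrizes by $P$ via $H=H(P)$, and you keep $\gamma_H H$ outside the eigenvalue where the paper absorbs $R_H-\gamma_H H(P)$ into $\lambda_P$; note also that your auxiliary claim $\lambda(P)\to+\infty$ (left unjustified) is superfluous, since $f(H)\ge \gamma_H H + n\mu_H\alpha_H\to+\infty$ already suffices.
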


\begin{proof} We plug the ansatz $(h(t,\x),p(t,\y))=(H\varphi(\x),P\psi(\y))$ into \eqref{sys:static}. Observe that  \eqref{profiles-stat-2} enforces
$$
\xb(t)=\0 \quad \text{and }\quad  \yb(t)=\0,
$$
so that we get the elliptic system (where $H$ and $P$ are also to be determined)
\begin{equation}  \label{sol-stat}
\left\{\begin{array}{ll}
    \ds \mu_H^2 \Delta \varphi+\left( R_H-\gamma_H H -\alpha_H^2 \Vert \x\Vert ^2 -P \rho_{\max} e^{-\theta\Vert \x\Vert ^2}\right)\varphi=0, & \quad \x \in \R^n, \vspace{7pt}
    \\
  \ds \mu_P^2 \,\Delta \psi+ \left(R_P- \gamma_P \, \frac{P}{H}-\alpha_P^2 \|\y\|^2\right) \, \psi  =0,  & \quad  \y \in \R^n.
     \end{array}\right.
\end{equation}

On the one hand, from the second equation in \eqref{sol-stat}, the function $\psi$ and the scalar $R_P- \gamma_P \, \frac{P}{H}$ have to be the ($L^1$ normalized) principal eigenelements of  the differential (harmonic oscillator) operator 
$-\mu_P^2 \,\Delta +\alpha_P^2\|\y\|^2$, that is (see Proposition \ref{prop:basis_eigenfunctions}) $\psi$ is given by \eqref{psi-explicit} and
\begin{equation}
\label{H(P)}
R_P-\gamma_P \frac P H=n\mu_P \alpha_P\qquad \Leftrightarrow \qquad H=H(P):= \frac{\gamma_P}{ R_P-n\mu_P\alpha_P}P,
\end{equation}
which, in particular, enforces $R_P>n\mu_P\alpha_P$. 

On the other hand,  for  fixed $P\ge 0$, we define $(\lambda_{P},\varphi_{P})$ as the unique principal eigenpair such that $\varphi_{P}>0$ in $\R^n$,
\begin{equation}\label{ODE-tphi}
\mu_H^2 \Delta \varphi _{P}+\left(R_H-\gamma_H H(P) -\alpha_H^2 \Vert \x\Vert ^2-P \rho_{\max} e^{-\theta\Vert \x \Vert ^2}\right)\varphi _{P}=\lambda _{P}\, \varphi _{P}, 
\end{equation}
with the normalisation condition $
\int_{\R^n}  \varphi _{P} (\x) \, \mathrm d\x =1$. From the invariance by rotation of the operator, we know that $\varphi_P$ is radial, and so its mean value is null. We know that $\varphi  _{P} \in H^1(\R^n)\cap L^2_{w}(\R^n)$, where
\[
L^2_w(\R^n):=\left\{f:\R^n\to\R\hbox{ such that } \x \mapsto \Vert \x \Vert\,f(\x) \in L^2(\R^n)\right\},
\]
and that the Rayleigh formula is available, namely
\begin{equation}\label{Rayleigh2}
\lambda  _{P}=R_H -\gamma_H H(P)-\min \left\{ Q _{P}(f): f\in H^1(\R^n)\cap L^2_{w}(\R^n), \int_{\R^n} f^2(\x)\,\mathrm d\x=1\right \},
\end{equation}
where
\begin{equation}\label{eq:rayleigh_Q}
Q _{P}(f):=\mu_H^2\int_{\R^n} \Vert \nabla f (\x)\Vert ^2 \, \mathrm d\x+\int_{\R^n} \left(\alpha_H^2 \Vert \x\Vert ^2+P\rho_{\max} e^{-\theta\Vert \x\Vert ^2}\right)f^2(\x)\, \mathrm d\x.
\end{equation}
The function $P\mapsto \lambda_P$ is continuous and, as clear from \eqref{H(P)}, \eqref{Rayleigh2} and \eqref{eq:rayleigh_Q}, decreasing on $[0,+\infty)$.

When $P=0$, the eigenpair is
\[
 \varphi_{0}(\x)=\left(\alpha_H\over 2 \,\pi\mu_H\right) ^{n/2} \exp \left(-\, \frac{\alpha_H}{2\, \mu_H}\, \|\x\|^2\right), \quad \lambda_0=R_H-n\mu_H\alpha_H.
\]
Moreover, we have that
\begin{eqnarray*}
Q _{P}(f)&\geq & \mu_H^2\int_{\R^n} \Vert \nabla f (\x)\Vert ^2 \,\mathrm d\x+\int_{\R^n} \alpha_H^2\Vert \x\Vert ^2 f^2(\x)\,\mathrm d\x,\\
&\geq & \min\left \{  \mu_H^2\int_{\R^n} \Vert \nabla f (\x)\Vert ^2 \,\mathrm d\x+\int_{\R^n} \alpha_H^2 \Vert \x\Vert ^2 f^2(\x)\, \mathrm d\x\right\}=n\mu_H \alpha_H,
\end{eqnarray*}
implying that
\[
\lambda _{P}\leq R_H-n\mu_H \alpha_H-\gamma _H H(P) \to -\infty \quad \text{ as } P\to +\infty.
\]
As a result, there is (a unique) $P>0$ such that $\lambda_P=0$ (which corresponds to constructing a stationary state)  if and only if $\lambda_0>0$, from which the result follows. 
\end{proof}

\section{Pursuit model  \texorpdfstring{($\alpha_H=0$, $\beta>0$, $\ell>0$)}{}}
\label{s:pursuit}

The objective of the rest of the present work  is to prove the existence of traveling pulses moving along a straight line, as observed in Section~\ref{s:num} and Figure~\ref{fig:n} (left column), under the same assumptions that is $\alpha_H=0$ and $\beta>0$. We start with the problem where the pathogen has no impact on the host ($\rho_{max}=0$) and show that in this case, there exist stationary pulses with velocity $c=0$. By perturbing this parameter, we manage to construct pulses moving at a velocity $c>0$. This perturbation technique only works when $\ell >0$ and not in the case $\ell=0$.

 Indeed, in the case $\ell=0$, the stationary solutions ($\rho_{max}=0$) we construct are symmetric with respect to $\mathbf{0}$; after perturbation, due to the symmetry of the problem, the existence of a solution with $c>0$ would imply the existence of a solution with $c<0$. However, the constructed solution is unique and would therefore also have zero velocity. Numerical computations (not shown) indicate that this solution corresponds to a ring-shaped host density, as in the case $\beta=0$ and is unstable: we obtain it as the long time behavior of the Cauchy problem only when $\x_0=\y_0$, i.e. when the problem is symmetric with respect to $\x_0=\y_0$. On the other hand, as soon as $\x_0\neq \y_0$ the numerical results of Section~\ref{s:num} show that traveling pulses can  be achieved even with $\ell=0$.

 In light of this, to rigorously establish the existence of traveling pulses with nonzero speed, we  break the symmetry of the problem by assuming the technical condition $\ell>0$. Our main result is presented in Theorem~\ref{th:lin_pursuit}.

\medskip

We thus  focus on \eqref{sys:full} in the special case $\alpha_H=0$, $\beta>0$, $\ell>0$, that is 
\begin{equation}  \label{sys-run}
    \left\{\begin{array}{ll}
        \ds \p_t h=\mu_H^2 \Delta_\x h +\left(R_H-\gamma_H H -\beta^2\|\x-\xb(t)\|^2- P\rho_{\max} \, e^{-\, \theta \|\x-\yb(t)\|^2} \right)  h, & \quad t>0,\,  \mathbf x \in \R^n,\vspace{7pt} \\
        \ds \p_tp = \mu_P^2 \Delta_\y p + \left(R_P- \gamma_P \, \frac{P}{H}-\alpha_P^2 \|\y+\ell\u -\xb(t)\|^2\right)p, & \quad t>0,\,  \mathbf y \in \R^n,
    \end{array}\right.
\end{equation}
where all parameters are positive, and $\u$ is a given unit vector. In the sequel, we are concerned with the following particular solutions.

\begin{defi}[Pursuit pulse]\label{def:pulse}  A pursuit pulse solution of \eqref{sys-run} is a sextuplet $(c,\tau,H,P,\varphi,\psi)$ with $c\neq  0$ (the propagation speed), $\tau>0$ (the delay of the pathogen), $H>0$ (the host population size), $P>0$ (the pathogen population size), $\varphi$ and $\psi$ two  positive probability densities  on $\R^n$, that is
\begin{equation}
    \label{profiles-gen}
\varphi > 0,\quad  \psi > 0,\quad \int _{\R^n} \varphi(\z)\,\mathrm d\z=\int _{\R^n} \psi(\w)\,\mathrm d\w =1,
\end{equation}
with means
\begin{equation}
    \label{means}
 \int _{\R^n} \z\varphi(\z)\,\mathrm d\z=\0, \quad \int _{\R^n} \w\psi(\w)\,\mathrm d\w=-\ell \u,
\end{equation}
such that the couple
\begin{equation}
 (h(t,\x),p(t,\y)) := \left(H \varphi(\x- ct\u),P \psi(\y-c(t-\tau)\u)\right),
        \label{ansatz:lin}
\end{equation}
solves \eqref{sys-run}.
\end{defi}

We now plug the ansatz \eqref{ansatz:lin} into \eqref{sys-run}. Observe that  \eqref{profiles-gen} and \eqref{means}  enforce
\begin{equation}
    \xb(t)=c  t  \u \quad \text{ and } \quad  \yb(t)= \left(c(t-\tau)-\ell\right)\u,
\label{ansatz:speed}
\end{equation}
so that we get the elliptic system (where $c$, $\tau$, $H$ and $P$ are also to be determined)
\begin{equation} \label{eq:tw-2}
    \left\{
    \begin{array}{lll}
& -c\nabla \varphi \cdot \u  = \mu_H^2 \Delta \varphi + \left(R_H-\gamma_H H -\beta^2 \Vert \z \Vert ^2-P\rho_{\max} e^{-\theta\Vert \z+(c\tau+\ell) \u \Vert ^2 }\right) \varphi  ,  & \qquad \z\in \R^n, \vspace{7pt}
\\
 & -c\nabla \psi\cdot\u  = \mu_P^2 \Delta \psi + \left(R_P-\gamma_P \ds \frac P H-\alpha_P^2\Vert \w -(c\tau-\ell)\u \Vert ^2\right)\psi,  & \qquad \w \in \R^n, 
    \end{array}
    \right.
\end{equation}
for $\varphi=\varphi(\z)$, $\psi=\psi(\w)$ where, roughly speaking, $\z=\x-ct\u$ and $\w=\y-c(t-\tau)\u$. 

\medskip

\noindent {\bf Determination of $\psi$.}  Letting
\begin{equation}
\overline \psi(\w) := \exp\left(\frac c{2\mu_P^2}\, \w\cdot \u\right)\, \psi(\w),
\end{equation}
the second equation in \eqref{eq:tw-2} becomes
\[
 \mu_P^2 \Delta \overline \psi+\left(R_P-\gamma_P \, {P\over H} -\, {c^2\over 4\mu_P^2} -\alpha_P^2 \|\w-(c\tau-\ell)\u \|^2\right)\overline \psi=0.
\]
Thus, the function $\overline \psi$ and the scalar $R_P-\gamma_P \, {P\over H} -\, {c^2\over 4\mu_P^2}$ have to be the principal eigenelements of  the differential (harmonic oscillator) operator 
$-\mu_P^2 \,\Delta +\alpha_P^2\|\w -(c\tau-\ell) \u\|^2$, that is (see Proposition \ref{prop:basis_eigenfunctions})
\[
\overline \psi(\w)=K \exp \left(-\, {\alpha_P\over 2\, \mu_P}\, \|\w-(c\tau-\ell) \u\|^2\right) \qquad\hbox{ and } \qquad R_P-\gamma_P {P\over H} -{c^2\over 4\mu_P ^2}=n\mu_P \alpha_P,
\]
for some normalization constant $K>0$ ensuring $\int_{\R^n}\psi(\w)\,\mathrm d\w=1$.  From now, we assume without loss of generality that
\begin{equation}\label{choix-u}
\u=(1,0,\dots,0),
\end{equation}
and check, using straightforward computations based on Lemma \ref{lem:gaussiennes}, that 
\begin{equation}
\label{K}
K^{-1}=\left(\frac{2\pi \mu_P}{\alpha_P}\right)^{\frac n2} e^{-\frac{(c\tau-\ell)c}{2\mu_P^2}+\frac{c^2}{8\alpha_P\mu_P^3}}.
\end{equation}

\begin{lem}\label{lem:gaussiennes} For any $a>0$, $b\in \R$, $c\in \R$, one has
$$
\int_\R e^{-ax^2+bx+c}\,\mathrm dx=\sqrt{\frac{\pi}{a}} \, e^{\frac{b^2}{4a}+c}, \qquad 
\int_\R x e^{-ax^2+bx+c}\,\mathrm dx=  \frac{b}{2a} \sqrt{\frac{\pi}{a}} \, e^{\frac{b^2}{4a}+c}.
$$
\end{lem}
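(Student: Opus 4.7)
The lemma is a standard Gaussian integral computation, so the plan is simply to reduce both identities to the classical identity
\[
\int_\R e^{-u^2}\,\mathrm du=\sqrt{\pi}.
\]

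For the first identity, I would complete the square in the exponent, writing
\[
-ax^2+bx+c=-a\left(x-\frac{b}{2a}\right)^2+\frac{b^2}{4a}+c,
\]
then apply the substitution $u=\sqrt{a}\,(x-b/(2a))$, which gives $\mathrm dx=\mathrm du/\sqrt{a}$. The factor $e^{b^2/(4a)+c}$ comes out of the integral, and what remains is $\int_\R e^{-u^2}\,\mathrm du/\sqrt{a}=\sqrt{\pi/a}$, yielding the claimed formula.

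For the second identity, the cleanest approach is to split $x=(x-b/(2a))+b/(2a)$. After the same change of variables as above, the first term integrates to zero by antisymmetry (it is $u\,e^{-u^2}$ against an even measure over $\R$), and the second term produces $(b/(2a))$ times the first identity. An equally short alternative is to differentiate the first identity with respect to $b$ under the integral sign (justified since the integrand and its derivative are dominated by integrable Gaussians), which gives $\int_\R x\,e^{-ax^2+bx+c}\,\mathrm dx$ on the left and $(b/(2a))\sqrt{\pi/a}\,e^{b^2/(4a)+c}$ on the right.

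There is no real obstacle here; the only care needed is the standard justification that completing the square and the linear change of variables preserve the integral, and that differentiation under the integral sign is admissible, both of which follow from exponential decay of the integrand for $a>0$.
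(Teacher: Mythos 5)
Your proof is correct. The paper states Lemma \ref{lem:gaussiennes} without proof, treating it as a standard Gaussian integral fact; your completion-of-the-square argument (and either of your two treatments of the second identity) is exactly the expected justification and contains no gaps.
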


 As for the mean value of $\psi$, we need to ensure $\int_{\R^n} w_1\psi(\w)\,\mathrm d\w=-\ell$. Using again straightforward computations based on Lemma \ref{lem:gaussiennes}, we see that this requires the relation
 $$
 c\left(\tau-\frac{1}{2\mu_P\alpha_P}\right)=0
 $$ 
 to hold.
 
To sum up, we have the following.

\begin{lem} \label{lem:psi}  If a pursuit pulse solution $(c,\tau,H,P,\varphi,\psi)$ of \eqref{sys-run} exists then, necessarily, 
\begin{equation}
\label{1}
\psi(\w)=K \exp\left(-\frac c{2\mu_P^2}\, \w\cdot \u\right)\ \exp \left(-\, {\alpha_P\over 2\, \mu_P}\, \|\w-c\tau \u\|^2\right),
\end{equation}
where $K>0$ is given by \eqref{K} (so that $\int _{\R^n}\psi(\w)\, \mathrm d \w=1$),
\begin{equation}
\label{2}
\tau=\frac{1}{2\mu_P\alpha_P}
\end{equation}
(so that $\int_{\R^n} \w \psi(\w)\,\mathrm d\w=-\ell \u$), and we have the relation
\begin{equation}
\label{3}
R_P-\gamma_P {P\over H} -{c^2\over 4\mu_P ^2}=n\mu_P \alpha_P.
\end{equation}
\end{lem}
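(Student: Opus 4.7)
The plan is to make precise the three conclusions by assembling the computations already sketched in the paragraphs preceding the statement. The starting point is the second equation in \eqref{eq:tw-2}, where the first-order drift term $-c\nabla\psi\cdot\u$ prevents immediate identification with a harmonic oscillator. I would apply the gauge-type substitution $\overline\psi(\w) := \exp(c\,\w\cdot\u/(2\mu_P^2))\,\psi(\w)$; a direct computation using the product rule then cancels the drift and shows that $\overline\psi$ solves the pure eigenvalue equation
\[
\mu_P^2\Delta\overline\psi + \bigl(\lambda - \alpha_P^2\|\w - (c\tau-\ell)\u\|^2\bigr)\overline\psi = 0, \qquad \lambda := R_P - \gamma_P \tfrac{P}{H} - \tfrac{c^2}{4\mu_P^2}.
\]

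Next, since $\psi > 0$ forces $\overline\psi > 0$, the pair $(\overline\psi,\lambda)$ must be the principal eigenpair of the shifted multivariate harmonic oscillator appearing on the left-hand side. Applying Proposition~\ref{prop:basis_eigenfunctions} (the translation by $(c\tau-\ell)\u$ does not affect the spectrum, only the position of the Gaussian), I would identify $\lambda = n\mu_P\alpha_P$, which is exactly \eqref{3}, and $\overline\psi$ as a Gaussian of the form $K\exp(-\alpha_P\|\w-(c\tau-\ell)\u\|^2/(2\mu_P))$ for some positive constant $K$. Undoing the gauge transformation then produces the form of $\psi$ asserted in \eqref{1}.

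The remaining two conclusions come from the two integral constraints on $\psi$. Having fixed $\u = (1,0,\dots,0)$, the integrand factorizes over coordinates and the integrals in $w_2,\dots,w_n$ are elementary Gaussian integrals. For the total mass constraint $\int_{\R^n}\psi = 1$, the nontrivial $w_1$-integral has the form $\int_\R e^{-aw^2 + bw + \gamma}\,\mathrm dw$ handled by the first identity of Lemma~\ref{lem:gaussiennes}; completing the square in the exponent and solving for $K$ produces exactly \eqref{K}. For the first-moment constraint $\int_{\R^n} w_1\psi(\w)\,\mathrm d\w = -\ell$, I would use the second identity in Lemma~\ref{lem:gaussiennes}; after simplification, the equation collapses to $c\bigl(\tau - 1/(2\mu_P\alpha_P)\bigr) = 0$, and since $c \neq 0$ by Definition~\ref{def:pulse}, this forces \eqref{2}.

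None of these steps poses a genuine analytical obstacle; the entire difficulty lies in the algebraic bookkeeping required to track constants through the gauge transformation and the one-dimensional Gaussian moment computations, especially to recover the exact exponent appearing in the formula \eqref{K} for $K$.
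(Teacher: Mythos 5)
Your proposal is correct and follows essentially the same path as the paper: the gauge substitution $\overline\psi=e^{c\,\w\cdot\u/(2\mu_P^2)}\psi$, identification of $(\overline\psi,\lambda)$ with the principal eigenpair of the translated harmonic oscillator via Proposition~\ref{prop:basis_eigenfunctions}, then the two integral constraints evaluated with Lemma~\ref{lem:gaussiennes}. (One small remark: your computation correctly produces a Gaussian centred at $(c\tau-\ell)\u$, which is what makes both \eqref{K} and the relation $c\bigl(\tau-\tfrac{1}{2\mu_P\alpha_P}\bigr)=0$ come out right; the centring $c\tau\u$ printed in \eqref{1} appears to be a misprint for $(c\tau-\ell)\u$, so it would be worth flagging this rather than asserting the formulas coincide.)
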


Therefore it remains to find the function $\varphi$, the speed $c$ (which has to be nonzero) and the population size $H$. Obviously, the difficulty comes from the exponential term in the equation for $\varphi$. Since the case $\rho_{\max}=0$ is decoupled, our strategy consists in using $\rho_{\max}$ as a perturbation parameter to finish the construction of pursuit pulse when $0<\rho_{\max}\ll 1$. 

When $\rho_{max}=0$ the pulse is actually stationary, as revealed by the  following which  is proved using exactly the same arguments as above for $\psi$.

\begin{lem}\label{lem:rhomaxzero} Assume
\[
R_H>n\mu_H\beta.
\]
If $c^0\in\R$, $H^0\in \R$ and $\varphi^0$ a probability density on $\R^n$ with zero mean are such that
\begin{equation} -c^0\nabla \varphi^0 \cdot \u  = \mu_H^2 \Delta \varphi^0 + \left(R_H-\gamma_H H^0 -\beta^2 \Vert \z \Vert ^2\right) \varphi ^0 ,   \qquad \z\in \R^n, 
\end{equation}
then they are given by
\[
c^0=0, \qquad   \varphi^0(\z)=\displaystyle \left({\beta\over 2\pi \mu_H}\right)^{n/2} \exp\left(-\, {\beta \over 2\mu_H}\, \|\z\|^2 \right), \qquad     H^0 =\displaystyle {R_H - n\mu_H \beta \over \gamma_H}>0.
\]
\end{lem}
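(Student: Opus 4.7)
The plan is to mimic, line by line, the argument just given for $\psi$. The equation for $\varphi^0$ differs from that for $\overline\psi$ only by the absence of the shift $\ell\u$ in the quadratic potential and by the normalization/zero-mean conditions on $\varphi^0$ rather than on $\psi$, so the three-step structure (gauge transform $\to$ harmonic oscillator $\to$ enforce constraints) carries over.

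\textbf{Step 1: remove the drift by a gauge transform.} Following the recipe used for $\psi$, I set
\[
\overline{\varphi}^0(\z) := \exp\!\left(\frac{c^0}{2\mu_H^2}\,\z\cdot\u\right)\varphi^0(\z).
\]
A direct computation (identical to the one leading to the equation for $\overline\psi$) shows that the first-order term $-c^0\nabla\varphi^0\cdot\u$ is cancelled and that $\overline{\varphi}^0$ satisfies
\[
\mu_H^2\,\Delta\overline{\varphi}^0 + \left(R_H - \gamma_H H^0 - \frac{(c^0)^2}{4\mu_H^2} - \beta^2\|\z\|^2\right)\overline{\varphi}^0 = 0, \qquad \z\in\R^n.
\]

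\textbf{Step 2: apply the harmonic oscillator.} Since $\varphi^0>0$ and the weight is positive, $\overline{\varphi}^0>0$. Hence $\overline{\varphi}^0$ and $R_H - \gamma_H H^0 - (c^0)^2/(4\mu_H^2)$ are the principal eigenelements of $-\mu_H^2\Delta + \beta^2\|\z\|^2$. By Proposition~\ref{prop:basis_eigenfunctions}, there exists $\widetilde K>0$ such that
\[
\overline{\varphi}^0(\z) = \widetilde K\,\exp\!\left(-\frac{\beta}{2\mu_H}\|\z\|^2\right), \qquad R_H - \gamma_H H^0 - \frac{(c^0)^2}{4\mu_H^2} = n\mu_H\beta.
\]
Undoing the gauge,
\[
\varphi^0(\z) = \widetilde K\,\exp\!\left(-\frac{c^0}{2\mu_H^2}\,\z\cdot\u - \frac{\beta}{2\mu_H}\|\z\|^2\right).
\]

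\textbf{Step 3: enforce the mean condition to get $c^0=0$, then conclude.} With the convention \eqref{choix-u}, $\u=(1,0,\dots,0)$, and the zero-mean constraint reduces to $\int_{\R^n} z_1 \varphi^0(\z)\,\mathrm d\z = 0$. Factorizing the integral coordinate by coordinate and applying Lemma~\ref{lem:gaussiennes} with $a = \beta/(2\mu_H)$ and $b = -c^0/(2\mu_H^2)$ yields
\[
\int_{\R^n} z_1 \varphi^0(\z)\,\mathrm d\z = -\frac{c^0}{2\mu_H\beta},
\]
up to a strictly positive multiplicative constant, so the zero-mean condition forces $c^0 = 0$. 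Plugging this back, the exponential weight disappears, the normalization $\int\varphi^0 = 1$ together with Lemma~\ref{lem:gaussiennes} fixes $\widetilde K = (\beta/(2\pi\mu_H))^{n/2}$, and the eigenvalue relation reduces to $R_H - \gamma_H H^0 = n\mu_H\beta$, i.e.\ $H^0 = (R_H - n\mu_H\beta)/\gamma_H$, which is positive thanks to the assumption $R_H > n\mu_H\beta$.

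I do not anticipate a genuine obstacle: everything is dictated by the parallel with the $\psi$-computation. The only place where one must be careful is the bookkeeping of the constant terms after the gauge transform (checking that the $(c^0)^2/(4\mu_H^2)$ correction lands with the right sign), and the sign in the mean-value computation, which is exactly what forces $c^0=0$ and thereby explains why the $\rho_{\max}=0$ pulse is necessarily stationary.
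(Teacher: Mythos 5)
Your proof is correct and follows exactly the route the paper indicates: the paper proves this lemma in one line by saying it follows ``using exactly the same arguments as above for $\psi$,'' and your three-step argument (gauge transform to kill the drift, identification with the principal eigenpair of the harmonic oscillator via Proposition~\ref{prop:basis_eigenfunctions}, then imposition of the normalization and zero-mean constraints via Lemma~\ref{lem:gaussiennes}) is precisely that transplantation of the $\psi$-computation. The bookkeeping of the $(c^0)^2/(4\mu_H^2)$ shift and the sign of the mean integral are handled correctly.
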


The main result of this work is the following.

\begin{theorem}[Run straight for your life]\label{th:lin_pursuit} Let $n=1$ or $n=2$. Let $\mu_H$, $R_H$, $\gamma_H$, $\beta$, $\theta$, $\mu_P$, $R_P$, $\gamma_P$, $\alpha_P$  be positive parameters such that
$$
 R_P>n\mu_P\alpha_P, \quad R_H > n\mu_H \beta ,\quad \beta >\max\left( 3\times2^{n-2}-1,\frac 1{\sqrt 5-2}\right)\mu_H\theta.
$$
Then there is $\ell_0>0$ small enough such that,  for all $0<\ell<\ell_0$, the following holds.

There exists $\ep^*>0$ small enough such that, for all $0<\rho_{\max}<\ep^*$, there is a pursuit pulse solution $(c,\tau,H,P,\varphi,\psi)$ to \eqref{sys-run}. Furthermore, $\psi$ is given by \eqref{1}, the delay $\tau$ by \eqref{2}, we have the relation \eqref{3}, $c>0$ and $H<H^0={R_H - n\mu_H \beta \over \gamma_H}$.
\end{theorem}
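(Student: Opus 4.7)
By Lemma~\ref{lem:psi}, the unknowns $\psi$, $\tau$ and the algebraic relation between $P$, $H$, $c$ are prescribed, and it remains to find $\varphi$, $c\neq 0$, $H$ solving the first equation of \eqref{eq:tw-2} together with $\int\varphi=1$ and $\int \z\varphi=\0$. I would first absorb the advection via the Liouville-type substitution $\overline\varphi(\z):=\exp\!\bigl(\tfrac{c}{2\mu_H^2}\z\cdot\u\bigr)\varphi(\z)$, obtaining the self-adjoint equation
\[
\mu_H^2\Delta\overline\varphi+\Bigl(R_H-\gamma_H H-\tfrac{c^2}{4\mu_H^2}-\beta^2\|\z\|^2-P\rho_{\max}\,e^{-\theta\|\z+(c\tau+\ell)\u\|^2}\Bigr)\overline\varphi=0.
\]
At $\rho_{\max}=0$ this reduces to the shifted harmonic oscillator and Lemma~\ref{lem:rhomaxzero}, together with $\int\z\varphi=\0$, forces $c=0$, $H=H^0$, $\overline\varphi=\varphi^0$. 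The strategy is then to use $\rho_{\max}$ as a bifurcation parameter and apply the implicit function theorem around this reference point.

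\textbf{Implicit function theorem setup.} The natural functional setting is a Hilbert space of functions expanded in the Hermite eigenbasis of the multivariate harmonic oscillator (Appendix~\ref{A:linear}), with a weighted norm strong enough to control multiplication by the Gaussian $e^{-\theta\|\z\|^2}$. By the axial symmetry around $\u$, only the first component of the mean condition \eqref{means} is nontrivial, so the full map $F$ I would invert consists of the PDE displayed above together with the two scalar constraints $\int\varphi=1$ and $\int z_1\varphi=0$, matched against the infinite-dimensional unknown $\overline\varphi$ and three scalar unknowns $(\delta c,\delta H,\delta\lambda)$, where $\delta\lambda$ is the amplitude of $\overline\varphi$ along $\varphi^0$.

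\textbf{Linearization, the main obstacle.} The linearization $DF$ at the reference point admits a triangular structure that I would exploit in three cascading steps: (i) the linearized PDE reads $\L_0\delta\overline\varphi-\gamma_H\,\delta H\,\varphi^0=\mathrm{source}$, where $\L_0:=\mu_H^2\Delta-\beta^2\|\z\|^2+n\mu_H\beta$ is self-adjoint with kernel spanned by $\varphi^0$, so the Fredholm alternative determines $\delta H$ and a unique $\delta\overline\varphi\perp\varphi^0$; (ii) the mass constraint fixes the $\varphi^0$-component of $\delta\overline\varphi$; (iii) the mean constraint determines $\delta c$, since $\partial_c\!\int z_1\varphi\big|_{c=0}=-\tfrac{1}{2\mu_H\beta}\neq 0$. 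The bulk of the work is to make this rigorous: I would have to show that $\L_0$ is Fredholm of index zero on the ad-hoc Hilbert space, and that the Gaussian multiplier $\overline\varphi\mapsto e^{-\theta\|\z\|^2}\overline\varphi$ is a bounded operator on it. Expanding both in the Hermite basis produces double series indexed by multi-indices with products of binomial coefficients; their absolute convergence is exactly the content of the technical lemma of Appendix~\ref{B:proof-taupe}, and the sharp quantitative bound there forces the spectral ratio condition $\beta>\max\!\bigl(3\!\times\!2^{n-2}-1,\tfrac{1}{\sqrt 5-2}\bigr)\mu_H\theta$ appearing in the theorem. I expect this spectral/combinatorial analysis to be the hardest part of the proof.

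\textbf{Signs of $c$ and $H$.} Once the implicit function theorem delivers a branch $(c(\rho_{\max}),H(\rho_{\max}),\overline\varphi(\rho_{\max}))$, the signs $c>0$ and $H<H^0$ would follow from first-order Taylor expansions in $\rho_{\max}$. The solvability condition in step (i), projected against $\varphi^0$, gives at leading order a relation of the form $\gamma_H H^{(1)}+P^0\!\int e^{-\theta\|\z+\ell\u\|^2}(\varphi^0)^2\,\mathrm d\z=0$, hence $H^{(1)}<0$. For the speed, projecting the first-order equation against the eigenfunction $z_1\varphi^0$ of $\L_0$ (with eigenvalue $-2\mu_H\beta$) and using that the Gaussian source $e^{-\theta\|\z+\ell\u\|^2}$ is shifted along $-\ell\u$ with $\ell>0$, a direct Gaussian computation yields $c^{(1)}>0$. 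This last sign is precisely where the symmetry-breaking assumption $\ell>0$ plays its role, as anticipated in the discussion preceding the theorem.
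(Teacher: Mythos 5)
Your outline correctly identifies the perturbation framework around $\rho_{\max}=0$, the implicit function theorem architecture, and the sign argument for $H^{(1)}<0$; the cascading structure of the linearization is also sound. Two comments.

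The Liouville substitution $\overline\varphi=\exp\!\bigl(\tfrac{c}{2\mu_H^2}\z\cdot\u\bigr)\varphi$ is a genuine variant from the paper's route: the paper keeps $\varphi$ with its advection term inside the linearized PDE, so the $c$-dependence enters as a $z_1\varphi^0$ source, whereas your substitution removes the first-order term but pushes the $c$-dependence into the constraints, which then read $\int e^{-cz_1/(2\mu_H^2)}\overline\varphi\,\mathrm d\z=1$ and $\int z_1 e^{-cz_1/(2\mu_H^2)}\overline\varphi\,\mathrm d\z=0$. Both routes lead to the same invertibility structure at the linear-algebra level, but neither bypasses the functional-analytic difficulty: the Gaussian multiplier $e^{-\theta\|\z+(c\tau+\ell)\u\|^2}$ carries a shift that depends on the unknown $c$, which is why the paper defines $\mathcal X$ as a (non-Hilbertian) Banach space inside $\R\times C^2(\R^n)$ whose very definition entangles the scalar $c$ with the function $\phi$ through a shifted-Gaussian-weighted decay condition on the Hermite coefficients. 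Describing the right setting as ``a Hilbert space of functions expanded in the Hermite eigenbasis'' underestimates this step, and in particular does not explain how the IFT is to be applied when the Gaussian weight itself moves with $c$.

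The substantive gap is the sign of $c^{(1)}$. Your claim that projecting against $z_1\varphi^0\propto\Gamma_\u$ gives the answer by ``a direct Gaussian computation'' does not survive scrutiny. The mean constraint $r=\int z_1\delta\overline\varphi$ involves the projection of $\delta\overline\varphi$ onto \emph{every} $\Gamma_\k$ with $\k\in\mathcal I_1$ (i.e., $k_1$ odd, $k_2,\dots,k_n$ even), so $c^{(1)}$ is an infinite series over $\mathcal I_1$, not just the $\Gamma_\u$ component. Moreover, as $\ell\to 0$ each source coordinate $f_\k$ is itself $O(\ell)$ via $H_{2j_1+1}(-\lambda\ol)=O(\ol)$, so no single term dominates the series. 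The paper resolves this by expanding the odd Hermite polynomials, exchanging sums via Fubini, extracting the $O(\ell)$ leading coefficient $\alpha_0$ as an $n$-dimensional alternating series, and invoking a Borwein--Borwein positivity theorem to show $\alpha_0>0$. It is precisely this positivity step that imposes the hypothesis $\beta>(3\times 2^{n-2}-1)\mu_H\theta$, which is logically independent from the constraint $\ot<\sqrt5-2$ (equivalently $\beta>\tfrac{1}{\sqrt5-2}\mu_H\theta$) coming from Lemma~\ref{lem:taupe-plus}. Your proposal misattributes both parameter conditions to the binomial lemma of Appendix~\ref{B:proof-taupe} and does not address the alternating-series positivity at all; without it, you cannot conclude $c>0$.
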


 Our result holds in dimension 1 or 2. However, it would suffice to improve the technical Lemma \ref{lem:taupe-plus} for the construction to hold in any dimension, see Remark \ref{rem:num-conj} for further details. Note also that $H<H^0$ means that the pathogen has  a negative impact on the host population size.

\section{Discussion}\label{s:discussion}

Inspired by recent literature on phenotypically-structured PDE models, particularly those in which the phenotypic optimum is mobile, we developed a new host-pathogen coevolution model. In this model, the pathogen follows a classic adaptation dynamic in the presence of a mobile optimum; however, unlike existing PDE models, the trajectory of the optimum here results from interactions with a host population. Our results, derived from rigorous mathematical analysis and numerical simulations, illustrate scenarios where hosts and pathogens engage in a perpetual evolutionary race, characterized by the formation of traveling pulses and cyclical adaptations.

Although the model is based on simplifying assumptions, it provides an initial mathematical justification for the emergence of moving optima, such as those considered in \cite{AlfBer17,CalHen22} (constant speed), or \cite{CarNad20,FigMir19,LorChi15,RoqPat20} (periodic fluctuations), without relying on external forcing terms (e.g., climate change, pharmacokinetics, etc.). Furthermore, in all these studies (except \cite{Lav23}), the assumption was that the moving optimum traveled either in a one-dimensional space or along a straight line in a higher-dimensional space. Simulations in the case where $\alpha, \, \beta>0$ (Figure~\ref{fig:n}, right column), which include an aggregation term and a phenotype optimum for the host, show that curvilinear trajectories (in this case, circular) can also naturally emerge.

Among these simplifying assumptions, we assumed that the interactions terms between hosts and pathogens are influenced solely by the mean phenotypes and the total populations. This simplification makes the model more tractable for mathematical analysis. However, as a consequence of this assumption, unrealistic behaviors can emerge (see Figure~\ref{fig:z}). To mitigate these, we introduced an aggregation term for the host population (case $\beta>0$), which forces the phenotypic population to remain concentrated around its average value. This aggregation term is crucial for the emergence of Chase Red Queen scenarios.

Among the future research directions, we can mention the improvement of certain technical results (see Remark \ref{rem:num-conj}) that would allow us to extend our findings to dimensions $n \geq 3$. Future research should also focus on rigorously demonstrating the existence of periodic circular trajectories, as observed numerically. Another promising direction could involve introducing non-perturbative methods to prove results over a broader range of parameters. If the pathogen's distribution remains Gaussian, as in the existing literature (e.g., \cite{HamLav20}), numerical simulations suggest that the pathogen's effect on the host leads to a bean-shaped distribution of the latter. Therefore, we cannot expect to explicitly construct Gaussian solutions to the system. One possible approach could be to use methods based on the principal eigenfunctions of elliptic operators, as discussed in Section~\ref{s:static}. Finally, the emergence of periodic trajectories along circles opens the door to revisiting the existing results in \cite{CarNad20,FigMir19,LorChi15,RoqPat20} for more general periodic trajectories.

\section{Proof of Theorem \ref{th:lin_pursuit}}\label{s:proof-th}

 Let us recall that it remains to construct $c\neq 0$, $H>0$ and $\varphi$ a probability density with zero mean so that the first equation in \eqref{eq:tw-2} holds.

Let us remark that, recalling \eqref{choix-u}, if we are equipped with a probability density $\varphi$ solving the first equation in \eqref{eq:tw-2} and satisfying
\begin{equation}
    \int_{\R^n} z_1\varphi(\z)\,\mathrm d\z=0,
    \label{hyp:x1}
\end{equation}
then the function $\tilde \varphi$ defined by
\[
 \tilde \varphi(\z) := \frac {\varphi(\z)+\varphi(\iota(\z))} 2 , \quad \hbox{ where } \quad \iota(\z):=(z_1,-z_{2},-z_3,\dots,-z_n),
\]
is still a probability density   solving the first equation in \eqref{eq:tw-2} and, additionally, it has zero mean. Thus, it is enough to consider the constraint \eqref{hyp:x1}.

In the sequel use $\ep$ as a shortcut for $\rho_{\max}$. As explained above, we will rely on perturbations technics around the case $\ep=0$ solved in Lemma \ref{lem:rhomaxzero}. We recall the {\it Implicit Function Theorem}, see \cite[Theorem 4.B]{Zei-86} for instance.

\begin{theorem}[Implicit Function Theorem]\label{thm:implicit-functions}
	Let $X$, $Y$ and $Z$ be three Banach spaces. Suppose that
	\begin{enumerate}
		\item [(i)] The mapping $\mathcal F:U\subset X\times Y\to Z$ is defined on an open neighborhood $U$ of $(x_0, y_0)\in X\times Y$ and $\mathcal F(x_0, y_0)=0$. 
		\item [(ii)] The partial Fr\'echet derivative of $\mathcal F$ with respect to $y$ exists on $U$ and
			$$
			\mathcal F_y(x_0, y_0):Y\to Z \text{  is bijective}.
			$$
		\item [(iii)] $\mathcal F$ and $\mathcal F_y$ are continuous at $(x_0,y_0)$. 
	\end{enumerate}
	Then, the following properties hold.
	\begin{enumerate}
		\item [(i)]  {\rm Existence and uniqueness}. There exist $r_0>0 $ and $r>0$ such that,  for every $x\in X$ satisfying $\Vert x-x_0\Vert\leq r_0$, there exists a unique $y(x)\in Y$ such that $\Vert y-y_0\Vert\leq r$ and $\mathcal F(x, y(x))=0$.
		\item [(ii)] {\rm Continuity}. If $\mathcal F$ is continuous in a neighborhood of $(x_0,y_0)$, then the mapping  $x\mapsto y(x)$ is continuous in a neighborhood of $x_0$. 
		\item [(iii)] {\rm Higher regularity}. If $\mathcal F$ is of the class $C^m$, $1\leq m\leq \infty$,  on a neighborhood of $(x_0, y_0)$, then $x\mapsto y(x)$ is also of the class $C^m$ on a neighborhood of $x_0$.
	\end{enumerate}
\end{theorem}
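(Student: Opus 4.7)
The plan is to reduce Theorem \ref{th:lin_pursuit} to a single application of the Implicit Function Theorem (Theorem \ref{thm:implicit-functions}), with $\ep:=\rho_{\max}$ as the perturbation parameter, around the trivial solution $(\ep,c,H,\varphi)=(0,0,H^0,\varphi^0)$ identified in Lemma \ref{lem:rhomaxzero}. By Lemma \ref{lem:psi}, $\psi$, $\tau$ and the relation \eqref{3} are already pinned down. By the symmetrization argument following Definition \ref{def:pulse}, it is enough to construct $(c,H,\varphi)$ with $\varphi$ a probability density satisfying only the symmetry constraint $\int z_1\varphi\,\mathrm d\z=0$, together with the first equation of \eqref{eq:tw-2}.

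The natural ansatz is to write $\varphi=\varphi^0+\eta$ and view the triple $(c,H,\eta)$ as the unknown $y$ in the IFT, with $x=\ep$. One defines
\[
\mathcal{F}(\ep,c,H,\eta):=\mu_H^2\Delta(\varphi^0+\eta)+c\,\partial_{z_1}(\varphi^0+\eta)+\bigl(R_H-\gamma_HH-\beta^2\|\z\|^2-\ep K(H,c)\,e^{-\theta\|\z+(c\tau+\ell)\u\|^2}\bigr)(\varphi^0+\eta),
\]
where $K(H,c)=P$ is the explicit population size extracted from \eqref{3}, and one couples this PDE equation with the two scalar constraints $\int\eta=0$ and $\int z_1\eta=0$ (the first enforcing $\int\varphi=1$, the second enforcing \eqref{hyp:x1}). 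The choice of Banach spaces is dictated by the need for the Gaussian perturbation $e^{-\theta\|\z+(c\tau+\ell)\u\|^2}\varphi$ to belong to the target space and be controlled in the Hermite expansion of the multivariate harmonic oscillator of Appendix \ref{A:linear}; this is precisely where ad-hoc weighted Sobolev-type spaces built on the Gaussian measure $\varphi^0\,\mathrm d\z$ enter, and why the assumption $\beta>\max(3\cdot 2^{n-2}-1,\frac1{\sqrt5-2})\mu_H\theta$ is imposed, so that the product of the two Gaussians remains absorbed by the spectral weight.

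The linearized operator $\partial_{(c,H,\eta)}\mathcal{F}(0,0,H^0,0)$ is, up to the two scalar directions, the shifted harmonic oscillator $\mu_H^2\Delta+(R_H-\gamma_HH^0-\beta^2\|\z\|^2)$. By Proposition \ref{prop:basis_eigenfunctions} its kernel on the ambient space is spanned by $\varphi^0$ and the first excited modes $z_i\varphi^0$. The crux is a Fredholm-style bookkeeping: the variation in $H$ produces $-\gamma_H\varphi^0$, covering the ground-state direction, while the variation in $c$ produces $\partial_{z_1}\varphi^0\propto z_1\varphi^0$, covering the $z_1$-excited direction; the other excited directions $z_j\varphi^0$ ($j\ge 2$) are suppressed by the $(z_2,\dots,z_n)$-even/symmetric framework. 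Thus the combined linearization is bijective from the constrained product space to the target, and IFT yields a unique $C^1$ branch $\ep\mapsto(c(\ep),H(\ep),\eta(\ep))$ for $0\le\ep<\ep^*$.

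The main obstacle, and the reason for the restriction $n\le 2$, is the quantitative control of the nonlinear term $\ep\,e^{-\theta\|\z+(c\tau+\ell)\u\|^2}\varphi$ in the chosen weighted norm: expanding in Hermite functions generates multinomial sums whose convergence is governed by the combinatorial Lemma \ref{lem:taupe-plus}, proved in Appendix \ref{B:proof-taupe}, and this lemma is only established in dimensions $1$ and $2$ (cf. Remark \ref{rem:num-conj}). Once the branch exists, the sign statements follow by differentiating $\mathcal{F}(\ep,c(\ep),H(\ep),\eta(\ep))=0$ at $\ep=0$ and testing against $\varphi^0$ and $z_1\varphi^0$: the term $-\ep K e^{-\theta\|\z+\ell\u\|^2}\varphi^0$ contributes a strictly negative integral against $\varphi^0$ (forcing $H'(0)<0$, whence $H<H^0$) and, thanks to $\ell>0$ breaking the $z_1$-reflection symmetry, a strictly signed integral against $z_1\varphi^0$ (forcing $c'(0)>0$, whence $c>0$). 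Positivity of $\varphi$ for $\ep>0$ small is immediate from $\varphi^0>0$ and the uniform continuity of the branch in a sufficiently regular norm.
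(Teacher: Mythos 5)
Your proposal does not address the statement you were asked to prove. The statement is the classical Implicit Function Theorem in Banach spaces (Theorem~\ref{thm:implicit-functions}), which the paper does not prove at all: it is taken verbatim from the literature (the paper cites Zeidler, Theorem~4.B). What you have written instead is a sketch of how the paper \emph{applies} this theorem to establish the main result, Theorem~\ref{th:lin_pursuit} --- that is, the choice of perturbation parameter $\ep=\rho_{\max}$, the ansatz $\varphi=\varphi^0+\phi$, the map $\mathcal F$, the invertibility of $\mathcal L=D_{(c,\phi,\eta)}\mathcal F(0,0,0,0)$, and the sign computations for $c^\ep$ and $\eta^\ep$. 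That material belongs to Section~\ref{s:proof-th}, not to the statement under review.

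A proof of the statement itself would have to be the standard contraction-mapping argument for the IFT. In outline: set $L:=\mathcal F_y(x_0,y_0)$, which is bijective and hence (open mapping theorem) has bounded inverse; rewrite $\mathcal F(x,y)=0$ as the fixed-point problem $y=\Phi_x(y):=y-L^{-1}\mathcal F(x,y)$; use hypothesis~(iii) to show that, on a small ball $\Vert y-y_0\Vert\leq r$ and for $\Vert x-x_0\Vert\leq r_0$, the map $\Phi_x$ is a contraction into that ball (the key estimate being $\Vert I-L^{-1}\mathcal F_y(x,y)\Vert<1$ by continuity of $\mathcal F_y$ at $(x_0,y_0)$, plus $\Vert\Phi_x(y_0)-y_0\Vert$ small by continuity of $\mathcal F$); apply Banach's fixed point theorem to get the unique $y(x)$; then deduce continuity from the contraction estimate and higher regularity by differentiating the identity $\mathcal F(x,y(x))=0$ and bootstrapping. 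Since the paper explicitly cites Zeidler for this, no original argument was expected here, but a blind proof needs to supply precisely this chain of reasoning rather than the downstream application.
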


In the sequel, we shall use the notations (multi-indexes $\k$ or $\j$ in $\N^n$, their $L^1$ norm $\sigma(\k)$, $\sigma(\j)$, eigenfunctions $\Gamma _\k=\Gamma_\k(\z)$, etc.) and results of Appendix \ref{A:linear}.

\subsection{The functional spaces and the map \texorpdfstring{$\mathcal F$}{}}


For $b>0$ to be precised later, we consider the spaces
\begin{equation*}
\mathcal X:=\left\{ (c,\varphi)\in \R\times  C^{2}(\mathbb{R}^{n})\left|\begin{array}{c}
  \exists C>0, \forall  \j \in \N ^n \text{ with } \sigma(\j) \leq 2, \forall \z \in \R^n, \vspace{5pt}\\
  \vert D^{\j }\varphi (\z)\vert \leq C\ds\prod_{i=1}^n{1\over (1+z_i^2)^2},\vspace{10pt}\\
\text{ and }\vspace{5pt}\\
  \exists K>0, \forall  {\j} \in \N ^n \text{ with } \sigma(\j) \leq 2, \forall \k \in \N^n,\vspace{5pt}\\
\ds \left|\int_{\mathbb{R}^n}D^{\j}\varphi(\z)\Gamma_{\k}(\z)\,\mathrm d\z\right|\leq \prod_{i=1}^n\frac{K}{(1+k_i)^{b-\frac{\sigma(\j)}{2n}}},\vspace{10pt}\\
\text{ and } \vspace{5pt}\\
       \exists M>0,\, \forall {\k}\in \N^n, \vspace{5pt}\\ \ds \Bigg|\int_{\R^n}\varphi(\z)e^{-\theta\|\z+(c\tau+\ell)\u\|^2}\Gamma_{\k}(\z)\,\mathrm d\z\Bigg |\le \prod_{i=1}^n{M\over (1+k_i)^{b-\frac 1n}}
\end{array}\right.\right\},
\end{equation*}
where $D^\j=\partial_{z_1}^{j_1}\dots \partial_{z_n}^{j_n}$, and
\begin{equation*}
\mathcal Y:=\left\{ f\in C^{0}(\mathbb{R}^{n})\left|\begin{array}{c}
\exists C>0, \forall \z \in \R^n, \vert f (\z)\vert \leq C\ds\prod_{i=1}^n {1\over 1+z_i^2}, \vspace{10pt}\\
\text{ and } \vspace{5pt}\\
\ds \exists K>0, \forall \k \in \N^n, \left|\int_{\mathbb{R}^n}f(\z)\Gamma_{\k}(\z)d\z\right|\leq  \prod_{i=1}^n\frac{K}{(1+k_i)^{b-\frac 1 n}}
\end{array}\right.\right\}.
\end{equation*}
They are  Banach spaces, see \cite[Lemma 4.1]{AlfPel21} for a related situation, when equipped with their respective norm defined by
$$
\|(c,\varphi)\|_{\mathcal X}:= |c|+\sum_{\sigma(\j)\le 2}\, \sup_{\z\in \R^n} \left| D^{\j}\varphi (\z) \prod_{i=1}^n (1+z_i^2)^2\right|+\sum_{\sigma(\j)\le 2} \Vert D^{\j}\varphi\Vert_{b-\frac{\sigma(\j)}{2n}}+ \left\|\varphi e^{-\theta\|\bullet+(c\tau+\ell)\u\|^2}\right\|_{b-\frac 1n},
$$
and
$$
\|f\|_{\mathcal Y}:= \sup_{\z\in \R^n} \left|f(\z)\prod_{i=1}^n (1+z_i^2)\right|+\left\|f\right\|_{b-1/n},
$$
where, for $m\in \R$,
\[
\|w\|_{m}:=\sup_{\k\in \N^n}\,\left(\left|\int_{\R^n} w(\z)\Gamma_\k(\z)\,\mathrm d\z\right|\,\prod_{i=1}^n(1+k_i)^{m}\right).
\]

For  $\ep>0$, we are thus  looking for a perturbation of the triplet $(c^0=0,\varphi^0,H^0)$ defined in Lemma~\ref{lem:rhomaxzero}. Precisely we look for $c^\ep$, $\phi^\ep$, $\eta^\ep$ so that  the triplet
$$(c^\ep,\varphi^\ep,H^\ep):=(c^\varepsilon,\varphi^0 +\phi^\varepsilon,H^0 +\eta^\varepsilon)$$
validates the first equation in \eqref{eq:tw-2}. Recalling \eqref{3}, this means nothing else that 
 $\mathcal R(\ep,c^\varepsilon,\phi^\varepsilon,\eta^\ep)=0$ where
\begin{multline}
    \mathcal R(\varepsilon,c,\phi,\eta):=\mu_H^2 \Delta \phi +c\nabla \phi\cdot \u \nonumber \\
    +\left[R_H-\gamma_H(H^0+\eta)-\beta^2\|\z\|^2-\ep \left(R_P-{c^2\over 4\mu_P^2}-n\mu_P\alpha_P\right)\frac{H^0+\eta}{\gamma_P} e^{-\theta\|\z+(c \tau+\ell) \u\|^2}\right]\phi\nonumber \\
      +c\nabla \varphi^0(\z)\cdot \u -\gamma_H \eta \varphi ^0(\z) -\ep \left(R_P-{c^2\over 4\mu_P^2}-n\mu_P\alpha_P\right)\frac{H^0+\eta}{\gamma_P} e^{-\theta\|\z+(c\tau+\ell) \u\|^2}\varphi^0(\z).\label{def-mathcal-R}
\end{multline}
We thus consider the map
\begin{equation}\label{map-F}
\begin{array}{ccccc}
    \mathcal F & : & \R\times \mathcal X\times \R& \rightarrow &  \mathcal Y \times \R^2\\
     & &(\varepsilon,(c,\phi),\eta) & \mapsto & \ds
\left(\mathcal R (\varepsilon,c,\phi,\eta), \int_{\R^n} \phi(\z)\,\mathrm d\z, \int_{\R^n} z_1 \phi(\z)\,\mathrm d\z\right).
\end{array}
\end{equation}
Obviously $\mathcal F(0,0,0,0)=0$.  Let us now check the hypotheses of the Implicit Function Theorem.

\paragraph{$\mathcal F$ is well-defined.} From our choices of the  spaces $\mathcal X$ and $\mathcal Y$, for $(\varepsilon,c,\phi,\eta)\in \R \times \mathcal X\times \R$, the function $\mathcal R(\varepsilon,c,\phi,\eta)$ is well defined and in $\mathcal Y$, and $\mathcal{F}(\ep,c,\phi,\eta)$ is well defined and in $\mathcal Y \times \R^2$. This follows from a straightforward adaptation of \cite[Lemma 4.2]{AlfPel21}, the adequate linear material coming from Appendix \ref{A:linear}. Let us simply note that, because of the term $e^{-\theta \Vert \z+(c\tau+\ell) \u\Vert ^2}\phi$ appearing in the definition of $\mathcal R$ above, we had to choose $\mathcal X$ not as a single function space but as a \lq\lq scalar-function space'' (see the third condition in the definition of $\mathcal X$). This contrasts with the analysis in \cite{AlfPel21}.

\paragraph{$\mathcal L:=D_{(c,\phi,\eta)}\mathcal F (0,0,0,0)$ the Fr\'echet derivative of $\mathcal F$ along $(c,\phi,\eta)$ at point $(0,0,0,0)$.} It is straightforward to check that it exists and is given by the linear continuous operator
\begin{equation}
\label{def-mathcalL}
\mathcal L: (c,\phi,\eta)\in  \mathcal X\times \R \mapsto \left(R(c,\phi,\eta), \int_{\R^n} \phi(\z)\,\mathrm d\z,
    \int_{\R^n} z_1 \phi(\z)\,\mathrm d\z 
\right) \in \mathcal Y\times \R^2,
\end{equation}
where
\begin{eqnarray}
    R(c,\phi,\eta) &:=&\mu_H^2 \Delta \phi +\left(R_H-\gamma_HH^0-\beta^2\|\z\|^2\right)\phi+c \nabla \varphi^0(\z) \cdot \u -\gamma_H \eta\varphi^0(\z)\nonumber\\
    & =&\mu_H^2 \Delta \phi +\left(R_H-\gamma_HH^0-\beta^2\|\z\|^2\right)\phi-\,{c\beta\over \mu_H}\,z_1 \varphi^0(\z)  -\gamma_H \eta\varphi^0(\z),\label{def-R}
\end{eqnarray}
since $ \nabla \varphi^0(\z) = -\,\frac \beta {\mu_H} \,\varphi^0(\z)\,\z$ from Lemma \ref{lem:rhomaxzero} and $\u=(1,0,\dots,0)$. 

Also, one may check that $D_{(c,\phi,\eta)}\mathcal F$ is well-defined on a neighborhood of $(0,0,0,0)$, and that both $\mathcal F$ and $D_{(c,\phi,\eta)}\mathcal F$ are continuous at $(0,0,0,0)$.

The next two subsections are  dedicated to prove that the map $\mathcal L$ is one-to-one. This is the core of the proof. We proceed by analysis and synthesis.

\subsection{Construction of the antecedent}\label{ss:antecedent}

{Let $(f,h,r)\in \mathcal Y\times \R^2$ be given and assume there is   $(c,\phi,\eta)\in \mathcal X\times\R$ such that $\mathcal L (c,\phi,\eta)=(f,h,r)$.} By definition of $\mathcal X$ and $\mathcal Y$, the functions $\phi$ and $f$ belong to $L^2(\R^n)$ so we may use the Hilbert basis  $(\Gamma_\k)_{\k\in \N^n}$ of Proposition \ref{prop:basis_eigenfunctions} to write
\[
\phi(\z)= \sum_{\k\in \N^n} \phi_{\k} \Gamma_{\k}(\z),\qquad \hbox{ and } \qquad   f(\z)=\sum_{\k\in \N^n} f_{\k} \Gamma_{\k}(\z),
\]
where the coordinates $(f_\k)_{\k\in\N^n}$ are given, while 
 the coordinates $(\phi_\k)_{\k\in \N^n}$ are to be found. From Proposition \ref{prop:basis_eigenfunctions} and the expression of $H^0$ in Lemma \ref{lem:rhomaxzero}, we have
\begin{eqnarray*}
    \mu_H^2\Delta \phi-\beta^2\|\z\|^2\phi+\left(R_H-\gamma_HH^0\right)\phi &=&-\sum_{\k\in \N^n}\lambda_\k \phi_\k\Gamma_\k+n\mu_H\beta\phi\\
    & =& -2 \mu_H \beta\sum_{\k\in \N ^n}\sigma(\k)\phi_{\k}\Gamma_{\k}.
\end{eqnarray*}
Also, from the expression  of $\varphi^0$ in Lemma \ref{lem:rhomaxzero} and Proposition \ref{prop:basis_eigenfunctions}, we have
\begin{equation}
    \varphi^0(\z) = \left({\beta\over2\pi\mu_H}\right)^{n/2} {1\over C_{\0}}\Gamma_{\0}(\z) = \left({\beta\over 4\pi\mu_H}\right)^{n/4}\Gamma_{\0}(\z).
\label{eq:relation_varphiast_et_gamma0}
\end{equation}
From the above and \eqref{def-R}, we have
\begin{equation}
\label{important}
R(c,\phi,\eta)(\z)=-2 \mu_H \beta\sum_{\k\in \N ^n}\sigma(\k)\phi_{\k}\Gamma_{\k}(\z)-\,{c\beta\over \mu_H} \left({\beta\over 4\pi\mu_H}\right)^{n/4}\,z_1 \Gamma_{\0}(\z)  -\gamma_H \eta\left({\beta\over 4\pi\mu_H}\right)^{n/4}\Gamma_{\0}(\z).
\end{equation}
Hence, projecting $R(c,\phi,\eta)=f$ on all $\Gamma _k$'s, we collect the following relations according to $\k\in \N^n$.

\begin{enumerate}
    \item [(i)]  The case $\k=(0,\dots,0)$.   From Lemma \ref{lem:form_polyn_hermite} $(ii)$ we have $\int_{\R^n} z_1\Gamma_{\0}(\z)\Gamma_{\0}(\z)\, \mathrm d\z=0$  and thus $
        f_{\0} =-\gamma_H \eta \left({\beta\over 4\pi\mu_H}\right)^{n/4}$. In other words, $\eta$ is given by
    \begin{equation}
    \label{le-eta}
    \eta=-\, \frac{f_{\0}}{\gamma_H}\left(4\pi\mu_H\over \beta\right)^{n/4}.
    \end{equation}
    \item [(ii)] The case $\k=\u=(1,0,\dots,0)$. From Lemma \ref{lem:form_polyn_hermite} $(iii)$ we have $\int_{\R^n} z_1\Gamma_{\0}(\z)\Gamma_{\u}(\z)\,\mathrm d\z=\sqrt{\frac{\mu_H}{2\beta}}$ and thus
    $$
    f_{\u}=
    -2\mu_H\beta  \phi_\u   -\,{c\beta\over \mu_H}\left(\beta\over 4\pi\mu_H\right)^{n/4}\sqrt{\mu_H\over 2\beta}.
    $$
 In other words, $\phi_\u$ is given by (note that $c$ is still to be found)
    \begin{equation}
        \phi_{\u}=-\, {1\over 2\mu_H\beta}\, f_{\u} -\,{c\over  2\mu_H}\,{1\over \sqrt{2\mu_H\beta}} \left(\beta \over 4\pi \mu_H\right) ^{n/4}.
        \label{rel:varphi_u_et_c}
    \end{equation}
    \item [(iii)] The case $\k\in\N^n\setminus \{\0,\u\}$.  This time, from Lemma \ref{lem:form_polyn_hermite} $(iii)$ we have $\int_{\R^n} z_1\Gamma_{\0}(\z)\Gamma_{\k}(\z)\,\mathrm d\z=0$ and thus $\phi_\k$ is given by
    \begin{equation}
    \label{les-phi-k}
    \phi_{\k}=-{1\over 2 \mu_H\beta\sigma(\k)}\, f_{\k}, \quad \k\in\N^n\setminus \{\0,\u\}.
    \end{equation}
\end{enumerate}

Next, we need $h=\int_{\R^n}\phi(\z)\,\mathrm d\z$ which, in view of Lemma \ref{lem:form_polyn_hermite} $(i)$, translates to the equality $h=\sum _{\k \in \N ^n} \phi_{2\k}m_{2\k}$. In other words, $\phi_{\0}$ is given by
$$
\phi_{\0}=\frac{h-\sum_{\k\in \N^n\setminus \{\0\}}\phi_{2\mathbf k}m_{2\mathbf k}}{m_\0},
$$
or equivalently,  in view of Lemma \ref{lem:form_polyn_hermite} $(i)$ and \eqref{les-phi-k},
\begin{equation}\label{le-phi-zero}
\phi_{\0}=\left({\beta \over 4\mu_H\pi }\right)^{n/4}h+{1\over 4\mu_H\beta}\sum_{\k\in \N^n\setminus\{\0\}}\left( \prod_{i=1}^n {\sqrt{(2k_i)!}\over {2^{k_i}} k_i!}\right) \frac{f_{2\k}}{\sigma(\k)}.
\end{equation}
Note that, for $b>0$ large enough, the above series does converge. Indeed, from Stirling's formula there is $C>0$ such that ${\sqrt{(2k_i)!}\over {2^{k_i}} k_i!}\leq \frac{C}{k_i^{1/4}}\leq C$ and since, by the second condition in the definition of $\mathcal Y$, 
 $\vert f_\k\vert \leq \prod_{i=1}^n \frac{K}{(1+k_i)^{b-1/n}}$ one has  $\sum_{\k\in\N^n\setminus\{\0\}} \frac{\vert f_{2\k}\vert}{\sigma(\k)}<+\infty$ for $b>0$ sufficiently large.

Last, we need $r=\int_{\R^n} z_1\phi(\z)\,\mathrm d\z$  which, in view of Lemma \ref{lem:form_polyn_hermite} $(ii)$, translates to
\[r=\sum_{\begin{smallmatrix}
    \k=(k_1,\dots,k_n)\in \N^n\\
    k_1 \text{ odd},\\
     k_2,\dots,k_n\text{ even}
\end{smallmatrix}}\phi_\k w_{1,\k}.
\]In other words, in view of \eqref{rel:varphi_u_et_c}, $c$ is given by
$$
c=\left({1\over  2\mu_H}\,{1\over \sqrt{2\mu_H\beta}} \left(\beta \over 4\pi \mu_H\right) ^{n/4}\right)^{-1} \left(-\frac{1}{2\mu _H\beta}f_{\u}-\, {r\over w_{1,\u}}+\frac{1}{w_{1,\u}} \sum_{\begin{smallmatrix}
    \k=(k_1,\dots,k_n)\in \N^n \setminus \{\u\}\\
    k_1 \text{ odd},\\
     k_2,\dots,k_n\text{ even}
\end{smallmatrix}}\phi_\k w_{1,\k}\right).
$$
Using  Lemma \ref{lem:form_polyn_hermite} $(ii)$ and \eqref{les-phi-k}, some tedious but straightforward computations show that this can be recast
\begin{equation*}
    c =- 2\beta\mu_H r-2\sqrt{\mu_H\over \beta} \left(4\pi\mu_H\over \beta\right) ^{n/4}\sum_{\begin{smallmatrix}
    \k=(k_1,\dots,k_n)\in \N^n\\
    k_1 \text{ odd}\\
    k_2,\dots,k_n\text{ even}
\end{smallmatrix}} {\ds\prod_{i=1}^n \sqrt{k_i!}\over [(k_1-1)/2]!\ds\prod_{i=2}^n(k_i/2)!} {f_\k\over 2^{\sigma(\k)/2}\sigma(\k)},
\end{equation*}
or, equivalently,
\begin{equation}\label{le-c}
    c =- 2\beta\mu_H r-2\sqrt{\mu_H\over \beta} \left(4\pi\mu_H\over \beta\right) ^{n/4}\sum_{\begin{smallmatrix}
    \k=(k_1,\dots,k_n)\in \N^n\\
    k_1 \text{ odd}\\
    k_2,\dots,k_n\text{ even}
\end{smallmatrix}}\left( \frac{\sqrt{k_1!/2^{k_1}}}{[(k_1-1)/2]!}\prod_{i=2}^n \frac{\sqrt{k_i!/2^{k_i}}}{(k_i/2)!}\right)\frac{f_\k}{\sigma(\k)}.
\end{equation}
Under the above form, using very similar arguments as above, one can check that, for $b>0$ large enough, the above series does converge.

Hence, $(c,\phi,\eta)$ is uniquely determined by \eqref{le-eta}, \eqref{rel:varphi_u_et_c}, \eqref{les-phi-k}, \eqref{le-phi-zero}, and \eqref{le-c}.

\medskip

Conversely, we have to check that $(c,\phi)$ --- given by \eqref{rel:varphi_u_et_c}, \eqref{les-phi-k}, \eqref{le-phi-zero}, and \eqref{le-c}--- belongs to $\mathcal X$. In the sequel, $C$ denotes a positive constant depending on the different parameters ($\mu_H$, $\beta$, etc.) but not on the multi-index $\k\in \N^n$, and that may change from line to line. For all $\k\in \N^n\setminus\{\0, \u\}$, it follows from \eqref{les-phi-k}, the second condition in the definition of $\mathcal Y$ and Lemma~\ref{lem:eigenfunc_control}, that
$$
\Vert \phi _\k \Gamma _\k \Vert _\infty \leq C\frac{\vert f_\k\vert}{\sigma(\k)}\Vert \Gamma _\k\Vert_\infty \le {C\over \sigma(\k)}\prod_{i=1}^n {k_i^{1/4}\over (1+k_i)^{b-1/n}}\le  C \prod_{i=1}^n \frac 1{(1+k_i)^{b-1/4}}.
$$
Taking $b>0$ sufficiently large ensures the normal convergence of  $\sum_\k \phi_\k \Gamma_\k$ and, by very similar arguments, that of $\sum_\k \phi_\k \partial _{z_j}\Gamma_\k$, $\sum_\k  \phi_\k \partial _{z_j} \partial _{z_l}\Gamma_\k$. Hence $\phi\in C^2(\R^n)$. It remains to check the three conditions appearing in the definition of $\mathcal X$.

\paragraph{The function $\phi$ satisfies the first property in the definition of $\mathcal X$ (algebraically decay).} Using successively \eqref{les-phi-k}, estimates \eqref{norm_infini:gamma_k},  \eqref{norm_infini_a_poids} from  Lemma~\ref{lem:eigenfunc_control}, and the fact that $f$ satisfies the second condition in the definition of $\mathcal Y$, we get
\begin{align*}
    \left|\left(\phi(\z)-\varphi_\0\Gamma_\0(\z)-\phi_\u \Gamma_\u(\z)\right)\prod_{i=1}^n (1+z_i^2)^2\right| & \leq \sum_{\k\in \N^n\setminus\{\0,\u\}} \left|\phi_\k \left(\prod_{i=1}^n (1+z_i^2)^2\right)\Gamma_\k(\z)\right|\\
& \leq C \sum_{\k\in \N^n\setminus\{\0,\u\}}{\vert f_{\k}\vert \over \sigma(\k)}\prod_{i=1}^n k_i^{9/4}
\\
& \leq C \sum_{\k\in \N^n\setminus\{\0,\u\}}{1 \over \sigma(\k)}\prod_{i=1}^n {k_i^{9/4}\over (1+k_i)^{b-1/n}},
\end{align*}
which, again, is finite for  $b>0$ sufficiently large. Thus there is a constant $C>0$ such that
\begin{equation}
\label{truc}
\forall \z=(z_1,\dots,z_n)\in \R^n,\quad |\phi(\z)|\leq C \prod_{i=1}^n {1\over (1+z_i^2)^2}.
\end{equation}
Now let us fix $1\le i \le n$, with $k_i>0$ and define the multi-indexes
$$\k^-:=(k_1,\dots,k_{i-1},-1+k_i,k_{i+1},\dots,k_n), \quad \k^+:=(k_1,\dots,k_{i-1},1+k_i,k_{i+1},\dots,k_n).
$$
 By \eqref{def:eigenfunctionsGamma}, \eqref{eq:eigenfunc_cst} and the recursive relations \eqref{rec-der-H}, we have
\begin{align}
    \partial_{z_i}\Gamma_\k (z) & = C_\k \sqrt{\beta\over \mu_H} \,e^{-\, {\beta\over 2\mu_H}\|\z\|^2} \left[H'_{k_i}\left(\sqrt{\beta \over \mu_H}z_i\right)-\sqrt{\beta \over\mu_H}z_i H_{k_i}\left(\sqrt{\beta \over \mu_H}z_i\right)\right] \prod_{\begin{smallmatrix}
    j=1\\
    j\neq i
\end{smallmatrix}}^nH_{k_j}\left(\sqrt{\beta \over \mu_H} z_i\right) \nonumber \\
& =  C_\k \sqrt{\beta\over \mu_H} \,e^{-\, {\beta\over 2\mu_H}\|\z\|^2} \left[k_iH_{k_i-1}\left(\sqrt{\beta \over \mu_H}z_i\right)-{1\over2} H_{k_i+1}\left(\sqrt{\beta \over \mu_H}z_i\right)\right]\prod_{\begin{smallmatrix}
    j=1\\
    j\neq i
\end{smallmatrix}}^nH_{k_j}\left(\sqrt{\beta \over \mu_H} z_i\right) \nonumber \\
& =\sqrt{\beta\over \mu_H} \left(\sqrt{\frac{k_i}2}\Gamma_{\k^-}(\z)-\sqrt{\frac{k_i+1}2} \Gamma_{\k^+}(\z)\right).
\label{rel_deriv_Gamma_k}
\end{align}
Thanks to these relations, similar arguments as above show that, for any $\j\in \N^n$ with $\sigma(\j)\leq 2$, $D^{\j}\phi$ also satisfies \eqref{truc} (for $b>0$ sufficiently large).

\paragraph{The function $\phi$ satisfies the second property in the definition of $\mathcal X$ (decay of $L^2$ projections).}
For all $\k\in \N^n\setminus \{\0,\u\}$, it follows from \eqref{les-phi-k} and the fact that $f$ satisfies the second condition in the definition of $\mathcal Y$ that
\[
\left|\int_{\R^n} \phi(\z) \Gamma_\k(\z) \, \mathrm{d}\z \right|=|\phi_\k| \leq C { |f_\k|\over \sigma(\k)}\le C\frac{\prod_{i=1}^n (1+k_i)^{1/n}}{\sigma(\k)} \prod_{i=1}^n{1\over (1+k_i)^{b}} \leq  \prod_{i=1}^n{C\over (1+k_i)^{b}},
\]
since $\sigma(\k)=\prod_{i=1}^n \sigma(\k)^{1/n}\geq \prod _{i=1}^n k_i^{1/n}$. Next,  integration by parts and \eqref{rel_deriv_Gamma_k} yields, for all $1\le i\le n$,
\begin{align*}
    \left|\int_{\R^n}  \partial_{z_i}\phi(\z) \Gamma_\k(\z) \, \mathrm{d}\z \right|& =\left|\int_{\R^n}  \phi(\z) \partial_{z_i}\Gamma_\k(\z) \, \mathrm{d}\z \right|\\
     & \leq C\left( \sqrt{k_i}\vert \phi_{\k^-}\vert +\sqrt{k_i+1}\vert \phi_{\k^+}\vert \right)\\
    & \leq C\left( \sqrt{k_i}{\vert f_{\k^-}\vert \over \sigma(\k^-)}+\sqrt{k_i+1}{\vert f_{\k^+}\vert \over \sigma(\k^+)}\right)\\
      & \le C\prod_{\begin{smallmatrix}
        l=1
    \end{smallmatrix}}^n{1\over (1+k_l)^{b-\frac 1{2n}}},
\end{align*}
since  $\sigma(\k^\pm)=\sigma(\k^\pm)^{1/2}\sigma(\k^\pm)^{1/2}\ge C k_i^{1/2} \prod_{l=1}^n k_l^{1/(2n)}$ and $f\in \mathcal Y$. Very similar arguments reveal that
\[
\left|\int_{\R^n} D^{\j}\phi(\z) \Gamma_\k(\z)\, \mathrm d \z\right|\le \prod_{i=1}^n{C\over (1+k_i)^{b-\frac 1 n}},
\]
for all $\j\in \N^n$ with $\sigma(\j)=2$, which concludes this part.

\medskip

Last, the fact that $\phi$ satisfies the third property (involving the speed $c$) in the definition of $\mathcal X$  deserves a full subsection.

\subsection{The function \texorpdfstring{$\phi$}{} satisfies the third property in the definition of \texorpdfstring{$\mathcal X$}{}}

Let us fix $\k=(k_1,\dots,k_n)\in \N^n$.  For the sake of clarity, we use the shortcuts
\begin{equation}\label{shortcuts}
\ot := {\mu_H\over \beta}\, \theta >0, \quad \oc: = \sqrt{\beta\over \mu_H}\,c, \quad  \ol = \sqrt{\beta\over \mu_H}\,\ell,
\end{equation}
Using successively that $\phi(\z) =\sum_{\j\in \N^n} \phi _\j \Gamma _\j (\z)$, Fubini's theorem, \eqref{def:eigenfunctionsGamma} and \eqref{eq:eigenfunc_cst}, we get
\begin{align*}
    \left|\int_{\R^n}\phi(\z)\right.& \left.e^{-\theta\|\z+(c\tau+\ell)\u\|^2}\Gamma_{\k}(\z)\,\mathrm d\z \right|= \left|\sum_{\j \in \N^n}\phi_\j \int_{\R^n}\Gamma_\j (\z)\Gamma_\k (\z) e^{-\theta \|\z+(c\tau+\ell) \u \|^2}\mathrm d\z\right|\\
    & =\left|C_\k \, \sum_{\j \in \N^n} \phi_\j\,  C_\j  \, \prod_{i=1}^n\int_{\R}H_{j_i} \left(\sqrt{\beta\over \mu_H}z_i\right)H_{k_i}\left(\sqrt{\beta\over \mu_H}z_i\right) e^{-\, {\beta\over \mu_H}z_i^2-\theta (z_i+(c\tau+\ell) u_i)^2}\mathrm dz_i\right|\\
    & =\left|\pi^{-\frac n2} \, \sum_{\j \in \N^n} \,  {\phi_\j\over \sqrt{2^{\sigma(\k)+\sigma(\j)}\prod_{i=1}^nk_i!j_i!}}\prod_{i=1}^n\int_{\R}H_{j_i} (y)H_{k_i}(y) e^{-y^2-\theta {\mu_H\over \beta}\left(y+\sqrt{\beta\over \mu_H}(c\tau+\ell) u_i\right)^2}\mathrm dy\right|\\
    & \le \pi^{-\frac n2} \sum_{\j \in \N^n} {|\phi_\j|\over \sqrt{2^{\sigma(\k)+\sigma(\j)}\prod_{i=1}^nk_i!j_i!}} \prod_{i=1}^n\left|\int_{\R}H_{j_i} (y)H_{k_i}(y) e^{-y^2-\ot\left(y+(\oc\tau+\ol) u_i\right)^2}\mathrm dy\right|\\
    & =:\pi^{-\frac n 2}\sum _{\j\in \N^n} \gamma_\j ^{\k}.
\end{align*}
We now investigate on $\gamma_\j^{\k}$, $\j \in \N^n$. In the sequel, $C$ denotes a positive constant depending on the different parameters ($\mu_H$, $\beta$, etc.),   but not on the multi-indexes $\k$, $\j\in \N^n$, and that may change from line to line.

\paragraph{First case: $\j=\0$.} Thanks to Lemma~\ref{lem:prod_scal_Hk_Hj_with_exp}, as $H_0\equiv 1$, we have, denoting $\kappa=-(\oc\tau+\ol) u_i$,
\begin{align*}
    \left|\int_{\R}H_{0} (y)H_{k_i}(y) e^{-y^2-\ot\left(y+(\oc\tau+\ol) u_i\right)^2}\mathrm dy\right|& = \sqrt{\pi\over 1+\ot}\left(\ot\over 1+\ot\right)^{k_i/2}\exp\left(-\, {\ot\, \,\kappa ^2\over 1+\ot}\right)\left|H_{k_i}\left(\sqrt{\ot\over1+\ot}\kappa\right)\right|\\
    & \leq C \left(\ot\over 1+\ot\right)^{k_i/2} 2^{k_i/2}\sqrt{k_i!} \\
    & \leq C  2^{k_i/2}\sqrt{k_i!},
\end{align*}
where we have used Cramer's inequality for Hermite polynomials, namely
\begin{equation}
\forall k\in \N,\, \forall x\in \R,\qquad    |H_k(x)|\leq C \sqrt{2^kk!}\, e^{\frac{x^2}2}.
\label{cramers_ineq}
\end{equation}
As a result,
\[
\gamma_\0 ^{\k}={|\phi_\0|\over \sqrt{2^{\sigma(\k)}\prod_{i=1}^nk_i!}}\prod_{i=1}^n\left|\int_{\R}H_{0} (y)H_{k_i}(y) e^{-y^2-\ot\left(y+(\oc\tau+\ol) u_i\right)^2}\mathrm dy\right|\leq C\vert \phi_\0\vert \leq \prod_{i=1}^n{C\over (1+k_i)^b},
\]
since $\phi$ satisfies the second property in the definition of $\mathcal X$. It thus remains to  estimate   $\sum_{\j \neq \0} \gamma _\j^\k$.

\paragraph{Second case: $\j\neq \0$.} With a slight abuse of notation, we now redefine $f_\u$ as the quantity such that
\[
\phi_\u=-\, {f_\u\over 2\mu_H\beta\sigma(\u)},
\]
see \eqref{rel:varphi_u_et_c}. This enables to make the relation \eqref{les-phi-k} valid for any non trivial multi-index and, in particular, 
$$
\sum_{\j \in \N^n\setminus \{\0\}} \gamma _\j ^\k\leq C s_c(\k),
$$
where
$$
s_c(\k):=\sum_{\j\in \N^n\setminus \{\0\}} {|f_\j|\over \sigma(\j)}\, \prod_{i=1}^n {1\over\sqrt{2^{k_i+j_i} k_i!j_i!}}\left|\int_{\R}H_{j_i}(y)H_{k_i}(y)e^{-y^2 -\ot(y+(\oc\tau+\ol) u_i)^2}\,\mathrm d y\right|.
$$
Since $f\in \mathcal Y$, we have 
\[
s_c(\k)\leq C\sum_{\j\in \N^n\setminus\{\0\}}\prod_{i=1}^n{1\over \sqrt{2^{k_i+j_i}k_i!j_i!}} \, {1\over p(j_i)^{1/n}(1+j_i)^{b-1/n}}\, \left|\int_{\R}H_{j_i} (y)H_{k_i}(y) e^{-y^2-\ot\left(y+(\oc\tau+\ol) u_i\right)^2}\mathrm dy\right|,
\]
where
\[
p(j)=\left\{\begin{array}{ll}
    1, & \text{ if $j= 0$},  \\
    j, & \text{ if $j>0$}.
\end{array}\right.
\]
We have finally simplified the problem into the study of a one-dimensional indexed series, depending on an integer $k$ and a real $u\in \{0,1\}$, namely
\begin{equation}
\Sigma(k,u):=\sum_{j=0}^{+\infty}\underbrace{{1\over \sqrt{2^{k+j}k!j!}} \, {1\over p(j)^{1/n}(1+j)^{b-1/n}}\, \left|\int_{\R}H_{j} (y)H_{k}(y) e^{-y^2-\ot\left(y+(\oc\tau+\ol) u\right)^2}\mathrm dy\right|}_{:=\gamma_j^k},
\label{maj:sc_avec_sigma_i}
\end{equation}
as $s_c(\k)\le C\prod_{i=1}^n \Sigma (k_i,u_i)$. In the sequel, we fix $u\in \{0,1\}$ (recall \eqref{choix-u}), and focus on $\Sigma(k,u)$. 

Applying Lemma~\ref{lem:prod_scal_Hk_Hj_with_exp}, we get
\begin{multline*}
\gamma_j^k\leq {C \over \sqrt{2^{k+j}k!j!}} \left(1+j\over p(j)\right)^{1/n} \frac 1{(1+j)^{b}} \left(\ot\over 1+\ot\right)^{\frac{j+k}2} \exp\left(-\, {\ot \kappa^2 \over 1+\ot}\right)\\
    \times \sum_{l=0}^{\min(k,j)} {j! k!\over l!{(j-l)!}(k-l)!} \left({{2}\over \ot}\right)^l    \left|H_{k-l}\left(\sqrt{\ot\over 1+\ot}\kappa\right)\right|\left|H_{j-l}\left(\sqrt{\ot\over 1+\ot}\kappa\right)\right|,
\end{multline*}
so that, using the shortcut $\lambda=\sqrt{\ot\over 1+\ot}$,
$$
\gamma_j^k\leq {C \over \sqrt{2^{k+j}}}  \frac 1{(1+j)^{b}} \lambda^j \lambda^k e^{- \lambda^2\kappa^2}\sum_{l=0}^{\min(k,j)} {\sqrt{j! k!}\over l!{(j-l)!}(k-l)!} \left({{2}\over \ot}\right)^l    \left|H_{k-l}\left(\lambda\kappa\right)\right|\left|H_{j-l}\left(\lambda\kappa\right)\right|.
$$
At this point we need a (local) refinement of the (uniform) Cramer's inequality \eqref{cramers_ineq}, namely \cite[Theorem 1, $(i)$]{Bon-Cla-90} (see also \cite{Erd-60}) which, in our setting, rewrites as
\begin{equation}
\forall k\in \N, \qquad    |H_k(\lambda \kappa)|\leq C \frac{\sqrt{2^kk!}}{\max(1,k^{1/4})} \, e^{\frac{\lambda^2\kappa^2}2}.
\end{equation}
Using this twice we reach
\begin{equation}
\label{rename-gamma}
\gamma_j^k\leq \frac{C}{(1+j)^b}\lambda^j\lambda^k \sum_{l=0}^{\min(k,j)}\frac{\sqrt{\binom{j}{l}\binom{k}{l}}}{\max(1,(k-l)^{1/4})\times \max(1,(j-l)^{1/4})}(\ot^{-1})^l.
\end{equation}
Based on this, we prove in Appendix \ref{B:proof-taupe} the following. 

\begin{lem}\label{lem:taupe-plus} There holds
\begin{equation}
\label{limsup}
\limsup_{k\to+\infty}\, (1+k)^{b-\frac 12} \sum_{j=0}^{+\infty} \gamma_j^k<+\infty.
\end{equation}
\end{lem}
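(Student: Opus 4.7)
The plan is to bound $S(k) := \sum_{j\ge 0}\gamma_j^k$ starting from \eqref{rename-gamma} and to show that $(1+k)^{b-\frac12}S(k)$ stays bounded. A key preliminary observation is that a naive Cauchy-Schwarz applied to the inner $l$-sum gives $\sum_l \sqrt{\binom{j}{l}\binom{k}{l}}\ot^{-l} \leq (1+\ot^{-1})^{(j+k)/2}$, which combined with the factor $\lambda^{j+k}$ cancels exactly, since $\lambda^2(1+\ot)/\ot = 1$, and yields only $S(k) = O(1)$. Any polynomial decay in $k$ must therefore be extracted from the inverse-quarter-power penalties $\max(1,(j-l)^{1/4})^{-1}\max(1,(k-l)^{1/4})^{-1}$ that deplete the combinatorial bump of the $l$-sum near its peak.

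The first step is to exchange the order of summation and write
$$S(k) \leq C\lambda^k \sum_{l=0}^{k}\frac{\ot^{-l}\sqrt{\binom{k}{l}}}{\max(1,(k-l)^{1/4})}\,U(l), \qquad U(l) := \sum_{m\ge 0}\frac{\lambda^{l+m}\sqrt{\binom{l+m}{l}}}{(1+l+m)^b\max(1,m^{1/4})}.$$
A Laplace-type analysis of $U(l)$ based on the generating function $\sum_m x^m\binom{l+m}{l} = (1-x)^{-(l+1)}$ reveals that $m \mapsto \lambda^m\sqrt{\binom{l+m}{l}}$ concentrates around $m^\star = \ot l$ with peak value of order $(1+\ot)^{l/2}/l^{1/4}$ and Gaussian width of order $\sqrt l$, producing an estimate of the form $U(l) \leq C\ot^{l/2}/(1+l)^{b-c_0}$ for some small $c_0 \ge 0$. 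Substituting this bound and making the change of variable $l = k - p$, the outer sum takes the form $(1+\ot)^{-k/2}\sum_p \ot^{p/2}\sqrt{\binom{k}{p}}/[(1+k-p)^{b-c_0}\,p^{1/4}]$, using the identity $(\lambda/\sqrt\ot)^k = (1+\ot)^{-k/2}$. The function $p \mapsto \ot^{p/2}\sqrt{\binom{k}{p}}$ is again a Gaussian-like bump, concentrated at $p^\star = \ot k/(1+\ot)$, with peak value of order $(1+\ot)^{k/2}/k^{1/4}$ and width of order $\sqrt k$. Cancelling $(1+\ot)^{k/2}$ against the prefactor and evaluating the remaining polynomial factors near $p^\star$ produces the desired rate $S(k) = O(k^{-(b-\frac12)})$.

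The main obstacle is to make these bump estimates rigorous and to control the tails $m \gg l$ and $p \gg k$, where the Gaussian heuristic breaks down. The parameter condition $\ot<\sqrt{5}-2$, equivalent to $\ot^2+4\ot<1$, is expected to enter as the convergence threshold ensuring geometric summability of these tails and survival of the peak cancellation when all constants are carefully bookkept. Key technical inputs are the local Cramer-type estimate on Hermite polynomials already exploited to derive \eqref{rename-gamma}, together with careful tracking of the ratios $\binom{k}{p\pm 1}/\binom{k}{p}$ near the combinatorial peak and a multi-regime split of the $m$- and $p$-sums.
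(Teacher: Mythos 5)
Your plan is a genuinely different route from the paper's: you exchange the order of summation and try to evaluate both the inner $m$-sum (through the auxiliary function $U(l)$) and the outer $p$-sum via a Laplace/saddle-point analysis, whereas the paper splits the $j$-sum into the three ranges $j \le k/2$, $k/2 < j < 3k/2$, $j \ge 3k/2$, disposes of the extreme ranges by crude geometric bounds (this is where the hypothesis $\ot < \sqrt{5}-2$ actually enters, via $2\sqrt{2}\lambda/(1+\ot)<1$ after replacing $\binom{j}{l}$ and $\binom{k}{l}$ by $2^j$, $2^k$), and in the middle range uses a log-concavity inequality $\binom{j}{l}\binom{k}{l} \le \binom{(j+k)/2}{l}^2$ to make the naive Cauchy--Schwarz cancellation exact, after which a second Cauchy--Schwarz on the quarter-power penalty factors extracts the $k^{-1/2}$ gain. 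Your algebra is internally consistent (the identity $\lambda/\sqrt{\ot} = 1/\sqrt{1+\ot}$, the location of both peaks, the cancellation of $(1+\ot)^{k/2}$), and a careful bookkeeping of your heuristic even suggests $c_0=0$, i.e.\ the stronger rate $S(k)=O(k^{-b})$.

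However, there is a genuine gap: the proposal stops exactly where the work begins. The crucial bound $U(l)\le C\,\ot^{l/2}/(1+l)^{b-c_0}$ is asserted from a Laplace heuristic but not proved, and the final evaluation of the $p$-sum is likewise heuristic. Making these bump estimates rigorous with constants that are uniform in $l$ and $k$ (including the small-$l$ and near-$p=k$ regimes, and the off-peak corrections to Stirling) is a nontrivial exercise that you explicitly flag as ``the main obstacle'' but do not address. As written, this is a plausible strategy, not a proof. Note also that your reading of the condition $\ot<\sqrt{5}-2$ is likely misdirected in your framework: your tails decay geometrically for any $\ot>0$ since $\lambda<1$, so that hypothesis does not obviously arise as a convergence threshold in the Laplace picture — in the paper it is an artifact of the crude $2^j$-type bounds used in the extreme $j$-ranges. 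If you want to pursue your approach, the paper's binomial inequality $\binom{j}{l}\binom{k}{l}\le\binom{(j+k)/2}{l}^2$ (proved via log-concavity of $\Gamma$) might still be the cleanest device to quantify the near-peak terms rigorously; alternatively, you would need uniform two-sided Stirling-type estimates for $\sqrt{\binom{l+m}{l}}$ near $m=\ot l$ and a separate treatment of the geometric tails, at a total cost comparable to (or exceeding) the paper's elementary argument.
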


We have finally proven that
\[
\Sigma(k,u)\le {C\over (1+k)^{b-\frac 12}},
\]
which ends the proof that $\phi\in \mathcal X$,  at least when $n=1$ or $n=2$. 

\begin{rem} \label{rem:num-conj}  A numerical conjecture is that a stronger property than \eqref{limsup} holds true, namely that, for any $n\geq 1$,
\begin{equation}
\label{limsup-conj}
\limsup_{k\to+\infty}\, (1+k)^{b-\frac 1n} \sum_{j=0}^{+\infty} \gamma_j^k<+\infty.
\end{equation}
If so then our whole construction works in any dimension $n\geq 1$. We leave this as an open question.
\end{rem}

\subsection{End of the proof}

 Hence, from the above we can apply the Function Implicit Theorem: there exists $\ep^*>0$ such that for all $0\leq \ep<\ep^*$, there exists $(c^\ep,\phi^\ep,\eta^\ep)\in \mathcal X\times  \R$ such that $(c^\ep,H^\ep,\varphi^\ep)=(c^\ep,H^0+\eta^\ep,\varphi^0+\phi^\ep)$ satisfies the first equation in \eqref{eq:tw-2}, namely
\begin{equation}\label{eq-satisfied}
-c^\ep \nabla \varphi^\ep \cdot \u  = \mu_H^2 \Delta \varphi^\ep + \left(R_H-\gamma_H H^\ep -\beta^2 \Vert \z \Vert ^2-\ep P^\ep e^{-\theta\Vert \z+(c^\ep\tau+\ell) \u \Vert ^2 }\right) \varphi ^\ep,   \qquad \z\in \R^n,
\end{equation}
where
\[
P^\ep:=\frac 1{\gamma_P} \left(R_P-n\mu_P\alpha_P-\frac{(c^\ep)^2}{4\mu_P^2}\right)H^\ep.
\]
Also, up to reducing $\ep^*$, we have $H^\ep>0$ (since $H^0>0$),  $P^\ep>0$ (since $R_P-n\mu_P\alpha_P>0$).

To conclude the proof of Theorem \ref{th:lin_pursuit}, it remains to prove the positivity of the  speed $c^\ep$, of the  profile $\varphi^\ep$, and the negativity of the mass $\eta^\ep$ (meaning $H^\ep<H^0$ for $\ep>0$). 

\paragraph{The sign of $\eta^\ep$ and of $c^\ep$  for $0<\ep\ll 1$.}  Let us recall that the map $\mathcal F$ was defined in \eqref{map-F}. Differentiating the equality $\mathcal{F}(\ep,c^\ep,\phi^\ep,\eta^\ep)=0$ with respect to $\ep$ and evaluating at $\ep=0$, we get
$$
\frac{\partial \mathcal F}{\partial \ep}(0,0,0,0)+\mathcal L\left(\left.\frac{d c^\ep}{d\ep} \right\vert_{\ep=0}, \left.\frac{d \phi^\ep}{d\ep}\right\vert_{\ep=0}, \left.\frac{d \eta^\ep}{d\ep} \right\vert_{\ep=0} \right)=0_{\mathcal Y\times \R^2},
$$
where $\mathcal L$ was defined in \eqref{def-mathcalL}. Computing 
$\frac{\partial \mathcal F}{\partial \ep}(0,0,0,0)$, using $H^0={R_H-n\mu_H\beta\over \gamma_H}$ and \eqref{eq:relation_varphiast_et_gamma0}, this transfers to
\begin{equation}\label{to-be-inverted}
\mathcal L\left(\left.\frac{d c^\ep}{d\ep} \right\vert_{\ep=0}, \left.\frac{d \phi^\ep}{d\ep}\right\vert_{\ep=0}, \left.\frac{d \eta^\ep}{d\ep} \right\vert_{\ep=0} \right)=(f,0,0),
\end{equation}
with
$$
f(\z):=M\left(\frac{\beta}{4\pi \mu_H}\right)^{\frac n 4}e^{-\theta\|\z +\ell \u\|^2} \Gamma _\0(\z), \quad M:={(R_H-n\mu_H\beta)(R_P-n\mu_P\alpha_P) \over \gamma_H \gamma_P}>0.
$$
At this point we take advantage of subsection \ref{ss:antecedent} to invert \eqref{to-be-inverted}.

First, \eqref{le-eta} provides 
           \begin{eqnarray*}
 \left.\frac{d \eta^\ep}{d\ep} \right\vert_{\ep=0}&=&\, -\frac{f_{\0}}{\gamma_H}\left(4\pi\mu_H\over \beta\right)^{n/4}\\
 &=&
      - {M \over \gamma_H}\int_{\R^n} e^{-\theta\|\z+\ell \u\|^2} \Gamma _\0 ^2(\z) \,\mathrm d \z\\
       &=&
    -   {M \over \gamma_H}
    \left(\frac{\beta}{\pi \mu _H}\right)^{\frac n 2}   
       \int_{\R^n} e^{-\theta\|\z+\ell \u\|^2} e^{-\frac{\beta}{\mu_H}\|\z\|^2}\,\mathrm d \z\\
       &=&
-       {M \over \gamma_H}\left({\beta \over \beta +\mu_H\theta}\right)^{\frac n 2}e^{-\frac{\beta\theta\ell^2}{\beta+\mu_H\theta}},
     \end{eqnarray*}
     using straightforward computations based on Lemma \ref{lem:gaussiennes}. In particular  $
\left.\frac{d \eta^\ep}{d\ep} \right\vert_{\ep=0}<0$, insuring that, up to reducing $\ep^*>0$ if necessary,  $\eta^\ep<0$  for any $0<\ep<\ep^*$.

   Next, \eqref{le-c} provides  
    \begin{equation}\label{le-c-bis}
    \left.\frac{d c^\ep}{d\ep} \right\vert_{\ep=0} =-2\sqrt{\mu_H\over \beta} \left(4\pi\mu_H\over \beta\right) ^{n/4}\sum_{\begin{smallmatrix}
    \k=(k_1,\dots,k_n)\in \N^n\\
    k_1 \text{ odd}\\
    k_2,\dots,k_n\text{ even}
\end{smallmatrix}}\left( \frac{\sqrt{k_1!/2^{k_1}}}{[(k_1-1)/2]!}\prod_{i=2}^n \frac{\sqrt{k_i!/2^{k_i}}}{(k_i/2)!}\right)\frac{f_\k}{\sigma(\k)}.
\end{equation}
Our task is now to show that this quantity is positive (so that, up to reducing $\ep^*>0$ if necessary, $c^\ep>0$ for any $0<\ep<\ep^*$). To do so, for 
\begin{equation}\label{k-j}
\k=(2j_1+1,2j_2,\dots,2j_n),
\end{equation}
we focus on $f_\k$. We shall use the notation $A_\k\propto B_\k$ when $A_\k=C B_\k$ for some constant $C>0$ depending on $M$, parameters $n$, $\beta$, $\mu_H$, $\theta$, but not on $\ell$ (that we need to take small enough at some point) nor on the multi-index $\k$. Hence, recalling Proposition \ref{prop:basis_eigenfunctions},
\begin{eqnarray*}
f_\k&\propto& \int_{\R^n} e^{-\theta\|\z +\ell \u\|^2} \Gamma _\0(\z)\Gamma_\k(\z)\,\mathrm d \z\\
&\propto & C_\k \int_\R e^{-\theta(z_1+\ell)^2}e^{-\frac{\beta}{\mu_H}z_1^2}H_{2j_1+1}\left(\sqrt{\frac \beta \mu_H}z_1\right)\,\mathrm d z_1\prod _{i=2}^n\int _\R e^{-\theta z_i^2}e^{-\frac{\beta}{\mu_H}z_1^2}H_{2j_i}\left(\sqrt{\frac \beta \mu_H}z_i\right)\,\mathrm d z_i \\
&\propto & C_\k  \int_\R e^{-\ot(y+\ol)^2}e^{-y^2}H_{2j_1+1}\left(y\right)\,\mathrm d y \prod _{i=2}^n\int _\R e^{-\ot y^2}e^{-y^2}H_{2j_i}\left(y\right)\,\mathrm d y,
\end{eqnarray*}
 using again the shortcuts \eqref{shortcuts}. Note that the assumption $\ell>0$ is here crucial since $\ell=0$ would imply the vanishing of the first integral in the above term.  In view of Lemma \ref{lem:prod_scal_Hk_Hj_with_exp} (with $k=0$, $j=2j_1+1$, $\kappa=-\ol$ and with  $k=0$, $j=2j_i$, $\kappa=0$), this yields
\begin{eqnarray*}
f_\k&\propto & e^{-\lambda^2\ol^2} C_\k \lambda^{2j_1+1}H_{2j_1+1}(-\lambda\ol)\prod_{i=2}^n (-\lambda^2)^{j_i},
\end{eqnarray*}
with the shortcut $\lambda=\sqrt{\frac{\ot}{1+\ot}}$. Plugging this into \eqref{le-c-bis} and using \eqref{eq:eigenfunc_cst}, we reach (note that $\sigma(\k)=2\sigma(\j)+1$) 
 \begin{eqnarray*}   
  - e^{\lambda^2\ol^2} \left.\frac{d c^\ep}{d\ep} \right\vert_{\ep=0}&\propto &\sum_{j_2,\dots,j_n} \prod_{i=2}^n \frac{1}{j_i!}\left(-\frac{\lambda^2}4\right)^{j_i}\sum _{j_1=0}^{+\infty}\frac{1}{2\sigma(\j)+1}\frac{1}{j_1!}\left(\frac{\lambda}{2}\right)^{2j_1+1}H_{2j_1+1}(-\lambda\ol).
\end{eqnarray*}
Using the expression of the odd Hermite polynomials, namely
$$
H_{2j+1}(x)=(2j+1)!\sum_{m=0}^j \frac{(-1)^{j-m}}{(2m+1)!(j-m)!}(2x)^{2m+1},
$$
and Fubini theorem, we get
 \begin{eqnarray*}   
  - e^{\lambda^2\ol^2} \left.\frac{d c^\ep}{d\ep} \right\vert_{\ep=0}&\propto & \sum_{m=0}^{+\infty} (-1)^m\frac{(-2\lambda\ol)^{2m+1}}{(2m+1)!}  \alpha_m,
\end{eqnarray*}
where
$$
\alpha_m:= \sum_{j_2,\dots,j_n} \prod_{i=2}^n \frac{1}{j_i!}\left(-\frac{\lambda^2}4\right)^{j_i}\sum _{j_1=m}^{+\infty}\frac{1}{2\sigma(\j)+1}\frac{1}{j_1!}\left(\frac{\lambda}{2}\right)^{2j_1+1}\frac{(2j_1+1)!(-1)^{j_1}}{(j_1-m)!}.
  $$
As announced above we now turn to sufficiently small $\ell\propto \ol$: the above provides the equivalent (still up to a positive multiplicative constant)
 \begin{eqnarray*}   
  \left.\frac{d c^\ep}{d\ep} \right\vert_{\ep=0}&\sim & 2\lambda\ell
  \alpha_0, \quad \text{ as } \ell \to 0.
\end{eqnarray*}
Hence $\left.\frac{d c^\ep}{d\ep} \right\vert_{\ep=0}>0$ for sufficiently small $\ell$ (see the setting of Theorem \ref{th:lin_pursuit}) provided $\alpha_0>0$ which we now aim at proving. If $n=1$, this reduces to
$$
\alpha_0=\sum _{j=0}^{+\infty}(-1)^j\frac{(2j)!}{(j!)^2}\left(\frac{\lambda}{ 2}\right)^{2j+1}
$$
an alternating series which, as easily checked, is positive since $\lambda^2<1$. Hence, it only remains to consider the case $n\geq 2$. To so do, we rewrite $\alpha_0$ as 
$$
\alpha_0=\sum_{\j=(j_1,\dots,j_n)} (-1)^{\sigma(\j)}g(\j), \qquad g(\j):=\frac{1}{2\sigma(\j)+1}\frac{(2j_1+1)!}{(j_1!)^2}\left(\frac{\lambda}{2}\right)^{2j_1+1}\prod_{i=2}^n \frac{1}{j_i!}\left(\frac{\lambda^2}{4}\right)^{j_i}>0
$$
to side with the work \cite{Bor-Bor-86} on alternating series in several dimensions. 
We claim that, for any $\j\in \N^n$, 
\begin{equation}\label{claim}
   g(\j)> \sum_{\begin{smallmatrix}\k\in\{0,1\}^n\\\text{the number of $1$ is odd} \end{smallmatrix}} g(\j+\k)=:S(\j).
\end{equation}
This fact, whose proof is postponed, means that the sign of the alternating sum over any unit $n$-cube is decided by the term at the corner nearest the origin, and immediately implies that $g$ is $n$-monotone decreasing in the sense of \cite[Section 2]{Bor-Bor-86}, and, similarly, so are all its coordinate restrictions. In other words, $g$ is fully monotone in the sense of \cite[Section 2]{Bor-Bor-86} and the positivity of the alternating sum $\alpha_0$ is provided by \cite[Theorem 3.1]{Bor-Bor-86}. To conclude, we prove \eqref{claim}. We have 
\begin{eqnarray*}
    S(\j) &\leq & \frac{1}{2\sigma(\j)+3}\Bigg(\sum_{\begin{smallmatrix}\k\in\{0,1\}^n \\\text{the number of $1$ is odd}\\k_1=0\end{smallmatrix}}\frac{(2j_1+1)!}{(j_1!)^2}\left(\frac{\lambda}{2}\right)^{2j_1+1}\prod_{i=2}^n \frac{1}{(j_i+k_i)!}\left(\frac{\lambda^2}{4}\right)^{j_i+k_i}\\
    &&\qquad \qquad\qquad  +\sum_{\begin{smallmatrix}\k\in\{0,1\}^n\\\text{the number of $1$ is odd}\\k_1=1\end{smallmatrix}}\frac{(2j_1+3)!}{((j_1+1)!)^2}\left(\frac{\lambda}{2}\right)^{2j_1+3}\prod_{i=2}^n \frac{1}{(j_i+k_i)!}\left(\frac{\lambda^2}{4}\right)^{j_i+k_i}\Bigg),
\end{eqnarray*}
and thus (note that  $3\times 2^{n-2}\lambda^2<1$ from the assumption $\beta >(3\times2^{n-2}-1)\mu_H\theta$ in Theorem \ref{th:lin_pursuit})
\begin{eqnarray*}
    S(\j) &\leq &
 \frac{1}{2\sigma(\j)+3}\Bigg(\frac{\lambda^2}{4}\sum_{\begin{smallmatrix}\k\in\{0,1\}^n \\\text{the number of $1$ is odd}\\k_1=0\end{smallmatrix}}\frac{(2j_1+1)!}{(j_1!)^2}\left(\frac{\lambda}{2}\right)^{2j_1+1}\prod_{i=2}^n \frac{1}{(j_i)!}\left(\frac{\lambda^2}{4}\right)^{j_i}\\
    &&\qquad \qquad \qquad +\frac 3 2 \lambda^2 \sum_{\begin{smallmatrix}\k\in\{0,1\}^n\\\text{the number of $1$ is odd}\\k_1=1\end{smallmatrix}}\frac{(2j_1+1)!}{(j_1!)^2}\left(\frac{\lambda}{2}\right)^{2j_1+1}\prod_{i=2}^n \frac{1}{(j_i)!}\left(\frac{\lambda^2}{4}\right)^{j_i}\Bigg)\\
    &\leq & \frac{1}{2\sigma(\j)+3} \times \frac 3 2 \lambda^2 \times 2^{n-1}
g(\j) (2\sigma(\j)+1)\\
&<& g(\j),
\end{eqnarray*}
and we are done.

\paragraph{The profile $\varphi^\ep$ is positive for $0<\ep\ll 1$.}  Assume by contradiction that there is a sequence $\ep_p\searrow 0$ such that $\varphi^{\varepsilon_{p}}$ is {\it not} nonnegative  on $\mathbb{R}^{n}$. Since $\lim_{\Vert \z\Vert\to +\infty} \varphi^{\ep_p}(\z)=0$, there is a point $\z^p$ where $\varphi^{\ep_p}$ reaches its negative minimum.  Testing \eqref{eq-satisfied} at point $\z^p$ we get
$$
\beta^2\Vert \z^p\Vert ^2\leq R_H-\gamma_H H^{\ep_p}-\ep_p P^{\ep _p}e^{-\theta \Vert \z^p+(c^{\ep_p}\tau+\ell) \u\Vert ^2}\leq R_H.
$$
However, let us underline that $\varphi^{0}>0$ on $\mathbb{R}^n$
and  $\Vert \varphi^{\varepsilon}-\varphi^{0}\Vert_{L^{\infty}(\mathbb{R}^{n})}\rightarrow0$ as $\ep \rightarrow0$. As a consequence, for $p$ large enough, there
holds $\varphi^{\varepsilon_p}(\z)>0$ for all $\z$ such that $\Vert \z\Vert \leq \frac{\sqrt{R_H}}{\beta}$, which contradicts $\varphi^{\ep_p}(\z^p)<0$. Therefore, by reducing $\ep ^*>0$ if necessary, we have that, for all $0\leq\ep<\ep^*$, $\varphi^{\ep}$ is nonnegative and thus, from the strong maximum principle, positive.

\medskip

This concludes the proof of Theorem \ref{th:lin_pursuit}. \qed

\appendix

\section{The multivariate harmonic oscillator}\label{A:linear}

We fix $\beta>0$ and consider the multivariate harmonic oscillator operator acting on $u:\R^n\to \R$ through
\[
\mathcal Hu := -\mu_H^2\Delta u+\beta^2\|\z\|^2u.
\]
We define the $L^1$-norm $\sigma(\k)$ of a multi-index $\k$ as
\begin{equation}
    \sigma(\k):=\sum_{i=1}^n k_i,  \quad \text{ for }  \k=(k_1,\dots,k_n) \in \N^n.
\end{equation}
We denote $(H_{i})_{i\in\mathbb{N}}$  the family of Hermite polynomials, that is the unique family of real polynomials satisfying
\begin{equation}
\int_{\mathbb{R}}H_{i}(x)H_{j}(x)e^{-x^{2}}\,\mathrm dx=2^{i}i!\sqrt{\pi}\delta_{i,j},\qquad\deg H_{i}=i,\label{eq:eigenfunc_hermite}
\end{equation}
where $\delta$ stands for Kronecker delta.

The following is well-known, see \cite{PueTor18} or \cite{AlfPel21} among many others.

\begin{prop}[Eigenelements of the harmonic oscillator]
\label{prop:basis_eigenfunctions} The operator $\mathcal H:=-\mu_H^2\Delta +\beta^2\|\z\|^2$ admits a family of eigenelements $\left(\lambda_{\k},\Gamma_{\k}\right)_{\k\in \N^n}$, where, for any $\k=(k_1,\dots,k_n)\in \N^n$,
\begin{equation}
\lambda_{\k} =  \left(2\sigma(\k)+n\right) \mu_H \beta,\label{eq:eigenval}
\end{equation}
and
\begin{equation}\label{def:eigenfunctionsGamma}
\Gamma_{\k}(\z) = C_{\k} \exp\left({-\,{\beta\over 2\mu_H}}\|\z\|^2\right) \prod_{i=1} ^n H_{k_i}\left(\sqrt{\beta\over \mu_H}z_i\right),
\end{equation}
with
\begin{equation}
C_{\k}= \left(\beta\over \pi \mu_H\right)^{n/4}\, {\frac 1 {  \sqrt{ 2^{\sigma (\k)}\prod_{i=1}^n k_i!}}},
\label{eq:eigenfunc_cst}
\end{equation}
a normalization constant so that $\left\|\Gamma_{\k}\right\|_{L^2(\R^n)}=1$. Additionally, the family $(\Gamma_{\k})_{\k \in\mathbb{N}^n}$ forms a Hilbert
basis of $L^{2}(\mathbb{R}^n)$.
\end{prop}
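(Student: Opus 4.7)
\textbf{Proof proposal for Proposition \ref{prop:basis_eigenfunctions}.} The plan is to reduce the multivariate problem to the classical one-dimensional harmonic oscillator by separation of variables, verify the eigenvalue relation by a direct computation based on the Hermite ODE, and then secure completeness via a Fourier-analytic density argument.

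First, I would exploit the separability $\mathcal H = \sum_{i=1}^n \mathcal H_i$, where $\mathcal H_i := -\mu_H^2 \partial_{z_i}^2 + \beta^2 z_i^2$ acts on the $i$-th coordinate alone and the $\mathcal H_i$ commute pairwise. The proposed eigenfunction factorizes as $\Gamma_\k(\z) = \prod_{i=1}^n \gamma_{k_i}(z_i)$, with $\gamma_k(x) := c_k\, e^{-\frac{\beta}{2\mu_H}x^2} H_k\bigl(\sqrt{\beta/\mu_H}\, x\bigr)$ and $c_k = (\beta/(\pi\mu_H))^{1/4}/\sqrt{2^k k!}$. Using the Hilbert tensor decomposition $L^2(\R^n) \simeq \bigotimes_{i=1}^n L^2(\R)$, the proposition reduces to its one-dimensional counterpart: $(\gamma_k)_{k\in\N}$ is an orthonormal Hilbert basis of $L^2(\R)$ satisfying $\mathcal H_1 \gamma_k = (2k+1)\mu_H\beta\,\gamma_k$. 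Summing eigenvalues across factors then gives $\lambda_\k = (2\sigma(\k)+n)\mu_H \beta$, and the tensor product of orthonormal bases is an orthonormal basis of the tensor product space.

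Second, for the one-dimensional eigenvalue problem, I would change variables $y = \sqrt{\beta/\mu_H}\, x$ so that $\mathcal H_1 = \mu_H\beta\,(-\partial_y^2 + y^2)$. A direct computation on the ansatz $g(y) := e^{-y^2/2} H_k(y)$ yields $g''(y) = e^{-y^2/2}\bigl[H_k''(y) - 2y H_k'(y) + (y^2 - 1) H_k(y)\bigr]$, hence
\[
(-\partial_y^2 + y^2)g(y) = e^{-y^2/2}\bigl[-H_k''(y) + 2y H_k'(y) + H_k(y)\bigr] = (2k+1)\,g(y),
\]
where the last identity uses the Hermite ODE $H_k''(y) - 2yH_k'(y) + 2k H_k(y) = 0$. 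This gives the claimed eigenvalue $2k+1$ for the normalized operator, hence $(2k+1)\mu_H\beta$ for $\mathcal H_1$. The normalization constant $c_k$ is then fixed by \eqref{eq:eigenfunc_hermite} together with the Jacobian $\sqrt{\mu_H/\beta}$ of the scaling, and multiplying over coordinates recovers \eqref{eq:eigenfunc_cst}. Orthonormality of the $\Gamma_\k$ in $L^2(\R^n)$ then follows from \eqref{eq:eigenfunc_hermite} by factorization.

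The main obstacle is completeness of $(\gamma_k)_{k\in\N}$ in $L^2(\R)$; once secured, the tensor-product argument above concludes the proof. I would proceed by contradiction: suppose $f\in L^2(\R)$ satisfies $\int_\R f(y)\gamma_k(y)\,\mathrm d y = 0$ for every $k\in\N$. Since $H_k$ has exact degree $k$, this is equivalent to $\int_\R f(y) y^k e^{-y^2/2}\,\mathrm d y = 0$ for every $k \in \N$. Introduce the function
\[
F(\zeta) := \int_\R f(y)\, e^{-y^2/2}\, e^{i\zeta y}\,\mathrm d y, \qquad \zeta \in \C.
\]
Since $y\mapsto f(y)e^{-y^2/2}$ belongs to $L^1(\R)$ by Cauchy--Schwarz and in fact enjoys super-exponential decay, $F$ extends to an entire function. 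Expanding $e^{i\zeta y}$ as a power series and swapping sum and integral (justified by dominated convergence via the Gaussian weight) gives $F(\zeta) = \sum_{k\geq 0} \frac{(i\zeta)^k}{k!}\int_\R f(y) y^k e^{-y^2/2}\,\mathrm d y = 0$. Hence $F\equiv 0$ on $\R$, so the Fourier transform of $f(y)e^{-y^2/2} \in L^1(\R)\cap L^2(\R)$ vanishes identically, and consequently $f = 0$ a.e. This contradicts the hypothesis that $f$ is a nonzero vector orthogonal to the span, and closes the argument.
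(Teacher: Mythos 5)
Your proof is correct, but there is nothing in the paper to compare it against: the authors do not prove Proposition \ref{prop:basis_eigenfunctions}, they simply record it as ``well-known'' and cite \cite{PueTor18} and \cite{AlfPel21}. So you are supplying an argument the paper leaves implicit. The route you take is the standard one: separation of variables into $n$ commuting one-dimensional oscillators, the Hermite ODE $H_k''-2yH_k'+2kH_k=0$ to get the eigenvalue $(2k+1)\mu_H\beta$ after the rescaling $y=\sqrt{\beta/\mu_H}\,x$, the orthogonality relation \eqref{eq:eigenfunc_hermite} together with the Jacobian $\sqrt{\mu_H/\beta}$ to pin down $c_k$, and the classical moment/Fourier argument for completeness. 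All of these steps check out, including the interchange of sum and integral in $F(\zeta)$, which is legitimate because $\int_\R |f(y)|\,|y|^k e^{-y^2/2}\,\mathrm dy \le \|f\|_{L^2}\bigl(\int_\R y^{2k}e^{-y^2}\,\mathrm dy\bigr)^{1/2}$ grows only like $\sqrt{k!}$ up to polynomial factors, so the series in $\zeta$ has infinite radius of convergence.

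One small imprecision worth flagging: you write that $y\mapsto f(y)e^{-y^2/2}$ ``enjoys super-exponential decay,'' but $f$ is merely $L^2$ and need not decay pointwise at all. What you actually need -- and what your subsequent argument implicitly uses -- is that $\int_\R |f(y)|e^{-y^2/2}e^{b|y|}\,\mathrm dy<\infty$ for every $b>0$, which holds by Cauchy--Schwarz since $e^{-y^2/2+b|y|}\in L^2(\R)$. That is what makes $F$ entire and licenses the term-by-term expansion; it is better to state it that way rather than attribute pointwise decay to an $L^2$ function. With this cosmetic correction the proof stands as a complete and correct filling-in of what the paper outsources to the literature.
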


The following elementary results will also be useful.

\begin{lem} \label{lem:form_polyn_hermite} Let $\k=(k_1,\dots,k_n)\in \N^n$ be a given multi-index.
\begin{enumerate}
    \item [(i)] The integral of $\Gamma_{\k}$ is given by
    \[
m_\k:=\int_{\R^n} \Gamma_{\k}(\z)\,\mathrm d\z =\left\{\begin{array}{ll}
    \ds \left(\pi\mu_H\over \beta\right)^{n/4}2^{(n-\sigma(\k))/2} \prod_{i=1}^n{\sqrt{k_i!}\over (k_i/2)!}, & \text{ if all $k_i$ are even,}\\
    0, & \text{ if not.} 
\end{array}\right.
\]
    \item [(ii)] Let $1\leq i \leq n$ be given. We define $\mathcal I_i=\{(k_1,\dots,k_n)\in \N^n$: $k_i$ is odd and for all $ j\neq i,\, k_j$ is even$\}$. Then the $i$-th mean value of $\Gamma_{\k}$ is given by
    \[
  w_{i,\k}:=  \int_{\R^n} z_i \Gamma_{\k}(\z) \,\mathrm d\z = \left\{\begin{array}{ll}
        \ds 2\sqrt{\mu_H\over \beta}\left(\pi\mu_H\over \beta\right)^{n/4}  {\ds 2^{(n-\sigma(\k))/2} \prod_{j=1}^n\sqrt{k_j!}\over \ds \,  [(k_i-1)/2]! \prod_{\begin{smallmatrix}j=1\\j\neq i\end{smallmatrix}}^n (k_j/2)! }, & \text{ if $\k\in \mathcal I_i$}, \\
        0, & \text{ if $\k \notin \mathcal I_i$.}
    \end{array}\right.
    \]
    \item [(iii)] For all $1\leq i\le n$, we have that
\[
\int_{\R^n} z_i\Gamma_{\k}(\z)\exp\left(-\,{\beta\over 2\mu_H}\, \|\z\|^2\right)\,\mathrm d\z= \sqrt{\mu_H\over 2\beta}\,\left(\mu_H \pi\over \beta\right)^{n/4}\, \delta_{k_i,1}\prod_{\begin{smallmatrix}j=1\\j\neq i\end{smallmatrix}}^n\delta_{k_j,0}.
\]
    \item [(iv)] For all $\theta >0$, we have that
\[
\int_{\R^n} \Gamma_{2\k}(\z)\exp\left(- \theta \|\z\|^2\right)\,\mathrm d\z= C_{2\k} \left(\frac{2\pi \mu_H}{\beta+2\theta \mu _H}\right)^{n/2}\,\prod_{i=1}^n\left(\beta-2\theta\mu_H\over \beta + 2\theta\mu_H \right)^{k_i}{(2k_i)!\over k_i!}.
\]
\end{enumerate}
\end{lem}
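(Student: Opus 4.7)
The overall plan is to treat the four parts uniformly: use \eqref{def:eigenfunctionsGamma} to write each integrand as a product, apply Fubini, and evaluate the resulting one-dimensional integrals against Hermite polynomials after the change of variable $y_i = \sqrt{\beta/\mu_H}\,z_i$ (which absorbs one power of $\sqrt{\mu_H/\beta}$ per dimension as a Jacobian and converts the Gaussian factor $e^{-\beta z_i^2/(2\mu_H)}$ into $e^{-y_i^2/2}$). Each one-dimensional integral is then one of three flavors: a Gaussian-weighted integral of $H_k$ alone, of $y H_k$, or the same weighted by $e^{-a y^2}$ with an effective parameter $a$.

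For part $(i)$, after the substitution I am left with $n$ copies of $I_k := \int_\R H_k(y) e^{-y^2/2}\,\mathrm dy$. The generating function $\sum_k H_k(y)\, t^k/k! = e^{2ty-t^2}$ combined with a completion of the square gives $\int e^{2ty - t^2 - y^2/2}\,\mathrm dy = \sqrt{2\pi}\, e^{t^2}$, and matching coefficients in $t$ yields $I_{2m} = \sqrt{2\pi}\,(2m)!/m!$ and $I_{2m+1} = 0$. Assembling the product and plugging in $C_\k$ from \eqref{eq:eigenfunc_cst} gives the formula. Part $(ii)$ is the same computation, except the $i$-th factor carries an extra $z_i$, which after substitution becomes $\sqrt{\mu_H/\beta}\int_\R y\, H_{k_i}(y) e^{-y^2/2}\,\mathrm dy$; the three-term recursion $y H_k(y) = \tfrac12 H_{k+1}(y) + k\, H_{k-1}(y)$ reduces it to $\tfrac12 I_{k_i+1} + k_i I_{k_i-1}$, forcing $k_i$ odd, while the $n-1$ remaining factors force each $k_j$ ($j\neq i$) to be even exactly as in $(i)$.

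Part $(iii)$ is still simpler: the extra weight $\exp(-\beta\|\z\|^2/(2\mu_H))$ combines with the one already present so that, after substitution, the 1D weight becomes $e^{-y^2}$, i.e.\ the Hermite orthogonality weight. Applying \eqref{eq:eigenfunc_hermite} together with the identity $y = \tfrac12 H_1(y)$, the $i$-th factor is nonzero iff $k_i = 1$ and each $j\neq i$ factor iff $k_j = 0$, producing the Kronecker-delta product. For part $(iv)$, the weight $e^{-\theta\|\z\|^2}$ combines with $e^{-\beta z_i^2/(2\mu_H)}$ to give, after substitution, a weight $e^{-a y^2}$ with $a := (\beta + 2\theta\mu_H)/(2\beta)$. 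The same generating-function identity $\int_\R e^{2ty - t^2 - a y^2}\,\mathrm dy = \sqrt{\pi/a}\,\exp\bigl(t^2(1-a)/a\bigr)$ yields $\int H_{2m}(y) e^{-ay^2}\,\mathrm dy = \sqrt{\pi/a}\,(2m)!/m!\,\bigl((1-a)/a\bigr)^m$ (odd indices vanishing by parity); substituting the value of $a$ back gives $(1-a)/a = (\beta - 2\theta\mu_H)/(\beta + 2\theta\mu_H)$ and $\pi/a = 2\pi\mu_H/(\beta + 2\theta\mu_H)$, matching the claimed form after one more product over $i$.

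There is no real obstacle here: the entire lemma is bookkeeping around three elementary Hermite identities (the orthogonality \eqref{eq:eigenfunc_hermite}, the recursion $yH_k = \tfrac12 H_{k+1} + k H_{k-1}$, and the generating function $e^{2ty-t^2}$). The only step that demands genuine care is the simplification of the many constants: tracking the factor $(\mu_H/\beta)^{1/2}$ contributed by each Jacobian, the $\sqrt{2\pi}$'s from the Gaussian integrations, and the normalization constant $C_\k$ given by \eqref{eq:eigenfunc_cst}, in order to compress everything into the compact form printed in the statement.
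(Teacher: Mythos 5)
Your proof is correct and follows essentially the same route as the paper: reduce via the product structure of $\Gamma_\k$ and Fubini to products of one-dimensional Gaussian--Hermite integrals, evaluate those, and simplify the constants. The only difference is at the micro level --- you evaluate the one-dimensional integrals via the generating function $e^{2ty-t^2}$, whereas the paper invokes the three-term recurrences \eqref{rec-der-H} for parts $(i)$--$(ii)$ and derives and iterates the recursion $J_p=2(2p-1)\tfrac{\beta-2\theta\mu_H}{\beta+2\theta\mu_H}J_{p-1}$ for part $(iv)$; both are standard and interchangeable, and part $(iii)$ is the same in both (recognizing $y=\tfrac12 H_1(y)$ and applying orthogonality).
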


\begin{proof} From the well-known recursive relations on the Hermite polynomials
\begin{equation}\label{rec-der-H}
 H_{j+2}(x)=2x H_{j+1}(x)-2(j+1)H_{j}(x)\quad \hbox{ and } \quad H'_{j+1}(x)=2(j+1)H_j(x),
 \end{equation}
 one can check that
 \[
   I_j:= \int_{\R}H_j(x)e^{-\frac{x^2}{2}} \,\mathrm dx = \left\{\begin{array}{ll}
        \ds\sqrt{2\pi}\, {j!\over (j/2)!}, & \text{ if $j$ is even,} \\
        0, & \text{ if $j$ is odd,} 
    \end{array}\right.
    \]
    from which item $(i)$  follows from elementary computations, and that
   \[
 \int_{\R}x H_j(x)e^{-\frac{x^2}{2}} \,\mathrm dx = 2jI_{j-1},
    \]  
    from which item $(ii)$  follows from elementary computations.

Since $z_i\exp\left(-\,{\beta\over 2\mu_H} \|\z\|^2 \right)$ is nothing else than $\frac{1}{2 \sqrt{\frac{\beta}{\mu_H}}C_{\l}} \Gamma_{\l} (\z)$ where $\l=(0,\dots,0,1,0,\dots,0)$ with the 1 at the $i$-th position (because $H_1(x)=2x$), item $(iii)$ follows from the fact that $(\Gamma_{\k})_{\k \in\mathbb{N}^n}$ forms a Hilbert
basis of $L^{2}(\mathbb{R}^n)$.

Last, let us turn to the proof of item $(iv)$. From \eqref{def:eigenfunctionsGamma} and Fubini theorem, we easily see that the integral to be computed is equal to $C_{2\k} \left(\frac{\mu_H}{\beta}\right)^{n/2}\Pi _{i=1}^n J_{k_i}$ where
$$
J_p:=\int_\R H_{2p}(x)e^{-(\frac 12+\frac{\theta \mu_H}{\beta})x^2}\,dx, \quad p\in \N.
$$
Using the recursive relation in \eqref{rec-der-H}, integration by part and the second relation in \eqref{rec-der-H}, we straightforwardly reach
$$
J_p=2(2p-1)\frac{\beta-2\theta \mu_H}{\beta+2\theta \mu _H}J_{p-1},
$$
from which we deduce
$$
J_p=\left({\beta-2\theta\mu_H\over \beta +2\theta \mu_H}\right)^p\, {(2p)!\over p!}J_{0}=\left({\beta-2\theta\mu_H\over \beta +2\theta \mu_H}\right)^p\, {(2p)!\over p!} \sqrt{2\pi\beta \over \beta + 2\theta \mu_H },
$$
and the desired result.
\end{proof}

We also need the following $L^\infty$ and $L^1$ estimates which are less classical (since, obviously, the usual framework is $L^2(\R^n)$). 

\begin{lem}\label{lem:eigenfunc_control}  There is a constant $C=C(\mu_H,\beta)>0$ such that, for all $\k\in \N^n$,
\begin{align} 
 \|\Gamma_\k\|_{1} &\le C \prod_{i=1}^n k_i^{1/4},\label{norm_un:gamma_k}\\
   \|\Gamma_\k\|_{\infty} &\le C \prod_{i=1}^n k_i^{1/4}, \label{norm_infini:gamma_k}
\end{align}
and, for all indexes $1\le j,\, l\le n$,
\begin{equation} 
    \|\partial_{z_j}\Gamma_\k\|_{\infty}\le C k_j^{1/2}\prod_{i=1}^n k_i^{1/4}, \quad 
    \|\partial_{z_j}\partial_{z_l}\Gamma_\k\|_{\infty}\le C k_j^{1/2}k_l^{1/2}\prod_{i=1}^n k_i^{1/4},
\end{equation}
together with
\begin{equation}\label{norm_infini_a_poids}
||z_j^{2}\Gamma_{\k}||_{L^\infty}  \leq C k_j \prod_{i=1}^n k_i^{1/4},\quad  ||z_j^{4}\Gamma_{\k}||_{L^\infty}  \leq C k_j^2 \prod_{i=1}^n k_i^{1/4}.
\end{equation}
\end{lem}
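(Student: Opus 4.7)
The plan is to exploit the tensor-product structure of the eigenfunctions and reduce every estimate to a one-dimensional inequality on normalized Hermite functions. From \eqref{def:eigenfunctionsGamma}--\eqref{eq:eigenfunc_cst} one has the factorization $\Gamma_\k(\z)=\prod_{i=1}^n \gamma_{k_i}(z_i)$, where
$$\gamma_k(z):=\left(\tfrac{\beta}{\pi\mu_H}\right)^{1/4}\frac{1}{\sqrt{2^k k!}}\,e^{-\frac{\beta}{2\mu_H}z^2}H_k\!\left(\sqrt{\tfrac{\beta}{\mu_H}}\,z\right)$$
is the $L^2(\R)$-normalized one-dimensional Hermite function. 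Since every norm appearing in the statement splits multiplicatively across coordinates, it is enough to prove $\|\gamma_k\|_\infty+\|\gamma_k\|_1\le C(1+k)^{1/4}$ together with suitable controls on $\gamma_k'$, $\gamma_k''$, $z^2\gamma_k$ and $z^4\gamma_k$; the convention $k_i^{1/4}$ in the statement must then be interpreted as $(1+k_i)^{1/4}$ (with the constant $C$ absorbing the $k_i=0$ cases), since otherwise the right-hand side would vanish spuriously.

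For the $L^\infty$ bound I would invoke Cramer's inequality \eqref{cramers_ineq}, $|H_k(x)|\le C\sqrt{2^k k!}\,e^{x^2/2}$. After the change of variable $x=\sqrt{\beta/\mu_H}\,z$, the factor $e^{x^2/2}$ cancels exactly against the Gaussian weight in $\gamma_k$ and the factor $\sqrt{2^k k!}$ against the normalization constant, giving $\|\gamma_k\|_\infty\le C(\mu_H,\beta)$ uniformly in $k$. For the $L^1$ bound I would use Cauchy--Schwarz with a tunable weight: for every $A>0$,
$$\|\gamma_k\|_1^{\,2}\le \int_\R\frac{dz}{1+z^2/A^2}\cdot\int_\R\bigl(1+z^2/A^2\bigr)\gamma_k^{\,2}\,dz = \pi A\bigl(1+A^{-2}\langle z^2\rangle_k\bigr),$$
where $\langle z^2\rangle_k:=\int_\R z^2\gamma_k^{\,2}\,dz$. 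Since $\gamma_k$ is the $L^2$-normalized eigenfunction of $-\mu_H^2\partial_z^2+\beta^2 z^2$ with eigenvalue $(2k+1)\mu_H\beta$, testing this ODE against $\gamma_k$ yields $\beta^2\langle z^2\rangle_k\le (2k+1)\mu_H\beta$, i.e.\ $\langle z^2\rangle_k=O(k+1)$. Optimizing with $A=\sqrt{\langle z^2\rangle_k}\sim\sqrt{k+1}$ produces $\|\gamma_k\|_1\le C(1+k)^{1/4}$, which is the claimed rate.

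The derivative and weighted estimates then follow algebraically from Hermite recursions combined with the uniform $L^\infty$ bound just established. The paper already derives from \eqref{rec-der-H} the multidimensional identity
$$\partial_{z_j}\Gamma_\k=\sqrt{\tfrac{\beta}{\mu_H}}\left(\sqrt{\tfrac{k_j}{2}}\,\Gamma_{\k^-}-\sqrt{\tfrac{k_j+1}{2}}\,\Gamma_{\k^+}\right),$$
see \eqref{rel_deriv_Gamma_k}, so the uniform bound $\|\Gamma_\mathbf{j}\|_\infty\le C$ immediately gives $\|\partial_{z_j}\Gamma_\k\|_\infty\le C\sqrt{k_j+1}$, and iterating once yields $\|\partial_{z_j}\partial_{z_l}\Gamma_\k\|_\infty\le C\sqrt{(k_j+1)(k_l+1)}$. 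For the weighted bounds I would rely on the three-term recursion $xH_k(x)=\tfrac12 H_{k+1}(x)+kH_{k-1}(x)$, which translates into $z_j\Gamma_\k$ being a combination of $\Gamma_{\k\pm e_j}$ with coefficients $O(\sqrt{k_j+1})$; iterating yields $z_j^2\Gamma_\k$ (resp.\ $z_j^4\Gamma_\k$) as a finite combination of $\Gamma_{\k+pe_j}$, $|p|\le 2$ (resp.\ $|p|\le 4$), with coefficients $O(k_j+1)$ (resp.\ $O((k_j+1)^2)$), and the uniform $L^\infty$ bound closes the argument.

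The only step that requires genuine harmonic-oscillator information beyond manipulations of Hermite polynomials is the moment estimate $\langle z^2\rangle_k=O(k+1)$ used in the $L^1$ bound; everything else reduces to Cramer plus three-term recursions. I would expect no real obstacle: one must simply be careful to keep track of how the $\sqrt{k_j}$ factors coming from each recursion combine across coordinates so that the final bounds contain precisely one extra factor $k_j^{1/2}$ per derivative and one extra factor $k_j$ per power of $z_j^2$, matching the exponents in the statement.
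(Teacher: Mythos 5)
Your proof is correct and is, moreover, genuinely self-contained, whereas the paper's proof of Lemma~\ref{lem:eigenfunc_control} is a one-line reduction: it observes the tensor factorization $\Gamma_\k=\prod_i\gamma_{k_i}$ and then cites \cite[Lemma~3.2]{AlfPel21} (itself resting on \cite{AlfVer18}) for the one-dimensional bounds. You instead derive the one-dimensional estimates directly, which is worth comparing. For the $L^\infty$ bound you use Cramer's inequality \eqref{cramers_ineq}; the Gaussian factor $e^{x^2/2}$ cancels exactly and you obtain $\|\gamma_k\|_\infty\le C$ \emph{uniformly in $k$}, i.e.\ a bound that is actually stronger than the stated $\|\Gamma_\k\|_\infty\le C\prod_i k_i^{1/4}$ (the paper presumably retains the $1/4$ exponents only to make the statement uniform across all the inequalities collected in the lemma). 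For the $L^1$ bound your weighted Cauchy--Schwarz with a free scale $A$, combined with the variational inequality $\beta^2\langle z^2\rangle_k\le(2k+1)\mu_H\beta$ obtained directly from the eigenvalue equation, is a clean and elementary way to get the sharp rate $(1+k)^{1/4}$ without invoking Plancherel--Rotach/Airy asymptotics, which is what one might otherwise reach for; this is the only step that uses harmonic-oscillator structure beyond polynomial recursions. The derivative and weighted-moment estimates then follow from the three-term recursions exactly as in the paper (indeed you reuse \eqref{rel_deriv_Gamma_k}), and your remark that $k_i^{1/4}$ in the statement must be read as $(1+k_i)^{1/4}$ (so that the bound is not spuriously zero when some $k_i=0$) correctly identifies a notational convention that the paper inherits from \cite{AlfPel21} without restating. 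In short: same factorization strategy, but you replace the citation with an explicit argument, and in the process you obtain a uniform $L^\infty$ bound and a short proof of the $L^1$ rate.
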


\begin{proof} From \eqref{def:eigenfunctionsGamma}, we have $\Gamma _\k(\z)=\Pi_{i=1}^n \gamma_{k_i}(z_i)$ where $\gamma_{k_i}$ are the one dimensional eigenfunctions arising in  \cite[subsection 3.3]{AlfPel21} (with $A=\frac{\beta}{\mu_H}$). The conclusion is therefore a direct application of  \cite[Lemma 3.2]{AlfPel21}, which itself relies on \cite{AlfVer18}.
\end{proof}

Next, for $\gamma \in \R$, the Hermite polynomials are known to satisfy 
$$
H_k(\gamma x)=\sum_{i=0}^{\lfloor k/2\rfloor} \gamma^{k-2i} (\gamma^2 -1)^i {k!\over i!(k-2i)!}H_{k-2i}(x), \quad H_k(x+y)=\sum_{i=0}^k\left(\begin{matrix}
        k\\i
    \end{matrix}\right)(2y)^{k-i}H_i(x).
$$
As a result, for $0<\gamma<1$,
    \begin{align*}
        H_k(\gamma(x+y)) & =\sum_{i=0}^{\lfloor \frac k 2\rfloor}\gamma ^{k-2i}(\gamma^2-1)^i \, {k!\over i! (k-2i)!}\,\sum_{j=0}^{k-2i} {(k-2i)!\over j! (k-2i-j)!}(2y)^{k-2i-j}H_j(x)\\
        & = \sum_{j=0}^k \left[ \sum_{i=0}^{\lfloor {k-j\over 2}\rfloor}\gamma ^{k-2i}(\gamma^2-1)^i \, {k!\over i! j!(k-2i-j)!}(2y)^{k-2i-j}\right]H_j(x)\\
        & = \sum_{j=0}^k \left[{k!\over j!}\gamma ^k (2y)^{k-j}\sum_{i=0}^{\lfloor {k-j\over 2}\rfloor} {1\over  i!(k-j-2i)!}\left(\gamma^2-1\over 4y^2\gamma^2\right)^i \right]H_j(x)\\
        & = \sum_{j=0}^k \left[{k!\over j!(k-j)!}\, \gamma ^j  \left({1-\gamma^2}\right)^{(k-j)/2} H_{k-j} \left(y\gamma\over \sqrt{1-\gamma^2}\right) \right]H_j(x),
    \end{align*}
  since the Hermite polynomials are given by $
    H_k(x) = k!\sum_{i=0}^{\lfloor k/2\rfloor} {(-1)^i\over i!(k-2i)!}(2x)^{k-2i}$. We retain that, for all $k\in \N$, $0<\gamma <1$, $(x,y)\in \R^2$,
    \begin{equation}\label{mult_hermite_pol}   
 H_k(\gamma (x+y)) = \sum_{j=0}^k\left(\begin{matrix}
            k\\j
        \end{matrix}\right)\, \gamma ^j  \left({1-\gamma^2}\right)^{(k-j)/2} H_{k-j} \left(y\gamma\over \sqrt{1-\gamma^2}\right)H_j(x),
    \end{equation}
    which is useful to get the last following results. 

\begin{lem}\label{lem:prod_scal_Hk_Hj_with_exp}
    For all $j,\, k\in \N$ with $j\ge k$, and $\theta> 0$, $\kappa\in \R$, we have
\begin{multline*}
    \int_\R H_j(y)H_k(y)\exp\left(-y^2-\theta (y-\kappa)^2\right)\, \mathrm d y
        =\sqrt{\pi \over 1+\theta}\left(\theta\over 1+\theta\right)^{(j+k)/2}\exp\left(-\, {\theta\kappa^2\over 1+\theta}\right)
        \\ \times \sum_{l=0}^{k}{j!k!\over l! (j-l)!(k-l)!}\left(2\over \theta\right)^lH_{j-l}\left(\sqrt{\theta\over 1+\theta}\kappa\right)H_{k-l}\left(\sqrt{\theta\over 1+\theta}\kappa\right).
\end{multline*}
In particular, for $\kappa=0$, we get
\begin{multline*}
    \int_\R H_j(y)H_k(y)\exp\left(-(1+\theta)y^2\right)\, \mathrm d y \\
        =\left\{\begin{array}{ll}
       \ds  \sqrt{\pi\over 1+\theta}\left(-\theta\over 1+\theta\right)^{\frac{j+k}2}{{(-1)^k}}\sum_{i=0}^{\lfloor k/2 \rfloor}{j!k!\over i!(k-2i)! [(j-k)/2+i]!}\left(2\over \theta\right)^{k-2i},
 & \text{if $(j+k)$ is even,} \vspace{5pt}\\
            0, & \text{if $(j+k)$ is odd.}
        \end{array}\right.
\end{multline*}
\end{lem}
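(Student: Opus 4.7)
\bigskip

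\noindent\textbf{Proof plan for Lemma~\ref{lem:prod_scal_Hk_Hj_with_exp}.}

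The plan is to reduce the integral to a pure Gaussian weight against a product of two Hermite polynomials in a shifted and rescaled variable, then apply the multiplication formula \eqref{mult_hermite_pol} to both Hermite factors and exploit the $L^2(e^{-u^2}\,du)$-orthogonality \eqref{eq:eigenfunc_hermite} of the $H_i$'s.

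First, I would complete the square in the exponent:
\[
y^2+\theta(y-\kappa)^2=(1+\theta)\left(y-\frac{\theta\kappa}{1+\theta}\right)^{2}+\frac{\theta\kappa^{2}}{1+\theta},
\]
so that the factor $\exp\bigl(-\theta\kappa^2/(1+\theta)\bigr)$ comes out of the integral. I would then change variables via $u=\sqrt{1+\theta}\bigl(y-\theta\kappa/(1+\theta)\bigr)$, which gives $y=\gamma(u+\sigma)$ with the shortcuts
\[
\gamma:=\frac{1}{\sqrt{1+\theta}}\in(0,1),\qquad \sigma:=\frac{\theta\kappa}{\sqrt{1+\theta}},
\qquad 1-\gamma^{2}=\frac{\theta}{1+\theta},\qquad \frac{\sigma\gamma}{\sqrt{1-\gamma^{2}}}=\kappa\sqrt{\frac{\theta}{1+\theta}},
\]
and $dy=\gamma\,du$. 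The integral becomes
\[
\gamma\,e^{-\theta\kappa^{2}/(1+\theta)}\int_{\R}H_{j}\bigl(\gamma(u+\sigma)\bigr)\,H_{k}\bigl(\gamma(u+\sigma)\bigr)\,e^{-u^{2}}\,du.
\]

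Next, I would apply \eqref{mult_hermite_pol} independently to each Hermite factor, with $\alpha:=\sqrt{\theta/(1+\theta)}$, to obtain
\[
H_{j}\bigl(\gamma(u+\sigma)\bigr)=\sum_{n=0}^{j}\binom{j}{n}\gamma^{n}\alpha^{j-n}H_{j-n}(\kappa\alpha)H_{n}(u),
\]
and the analogous expansion for $H_{k}$. Multiplying, integrating termwise against $e^{-u^{2}}du$, and using \eqref{eq:eigenfunc_hermite} to collapse the double sum onto the diagonal $n=m$ (with $0\le m\le \min(j,k)=k$), one is left with
\[
\sqrt{\pi}\,\gamma\sum_{m=0}^{k}\binom{j}{m}\binom{k}{m}2^{m}m!\,\gamma^{2m}\,\alpha^{j+k-2m}\,H_{j-m}(\kappa\alpha)H_{k-m}(\kappa\alpha).
\]
A routine consolidation of powers gives $\gamma^{2m+1}\alpha^{j+k-2m}=\sqrt{\pi/(1+\theta)}\,\bigl(\theta/(1+\theta)\bigr)^{(j+k)/2}\theta^{-m}/\sqrt{\pi}$, while $\binom{j}{m}\binom{k}{m}m!=j!k!/\bigl(m!(j-m)!(k-m)!\bigr)$; reinserting the prefactor $\exp\bigl(-\theta\kappa^{2}/(1+\theta)\bigr)$ yields exactly the announced formula.

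For the particular case $\kappa=0$, I would substitute the classical values $H_{p}(0)=0$ when $p$ is odd and $H_{p}(0)=(-1)^{p/2}p!/(p/2)!$ when $p$ is even. Both Hermite factors at $0$ then force $j-l$ and $k-l$ to be even simultaneously, which requires $j+k$ to be even (otherwise the sum vanishes identically). When $j+k$ is even, the parametrization $l=k-2i$ with $i\in\{0,\dots,\lfloor k/2\rfloor\}$ turns the general formula into the stated closed form after a short bookkeeping of signs (using $(-1)^{(j-k)/2}=(-1)^{k}(-1)^{(j+k)/2}$).

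The only real obstacle is the bookkeeping of the powers of $\gamma$, $\alpha$ and $(1+\theta)$ and keeping the binomial identities organised; there are no analytic difficulties. The hypothesis $j\ge k$ is used solely to write $\min(j,k)=k$ as the upper summation index, and the convergence of all Gaussian integrals is automatic since $\theta>0$.
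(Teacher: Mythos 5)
Your proof is correct and follows essentially the same route as the paper: the same Gaussian change of variables, the same double application of the translation–dilation formula \eqref{mult_hermite_pol}, collapse of the resulting double sum via the orthogonality relation \eqref{eq:eigenfunc_hermite}, and evaluation of $H_p(0)$ for the $\kappa=0$ case. The only cosmetic differences are the extra prose around the power bookkeeping and the explicit sign check $(-1)^{(j-k)/2}=(-1)^k(-1)^{(j+k)/2}$, which the paper leaves implicit.
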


\begin{proof}
Changing the variable we have
    \begin{eqnarray*}
      &&  \int_\R H_j(y)H_k(y)\exp\left(-y^2-\theta (y-\kappa)^2\right)\, \mathrm d y\\
            &&\quad ={1\over \sqrt{1+\theta}}\exp\left(-\, {\theta\kappa^2\over 1+\theta}\right)\int_\R H_j\left(\frac x {\sqrt{1+\theta}}+\frac {\theta\kappa} {{1+\theta}}\right)H_k\left(\frac x {\sqrt{1+\theta}}+\frac {\theta\kappa} {{1+\theta}}\right)e^{-x^2}\, \mathrm d x\\
        &&\quad ={(1+\theta)^{-(j+k+1)/2}}\theta^{(j+k)/2}\exp\left(-\, {\theta\kappa^2\over 1+\theta}\right)\\
        &&\qquad \times \sum_{l_1=0}^j\sum_{l_2=0}^k\left(\begin{matrix}j\\l_1\end{matrix}\right)\,\left(\begin{matrix}k\\l_2\end{matrix}\right)\, \theta^{-(l_1+l_2)/2} 
      (H_{j-l_1}H_{k-l_2})\left(\sqrt{\theta\over 1+\theta}\kappa\right)\int_\R H_{l_1}(x)H_{l_2}(x)e^{-x^2}\mathrm d x,
\end{eqnarray*}
thanks to a double use of \eqref{mult_hermite_pol} (with $\gamma=\frac{1}{\sqrt{1+\theta}}$, $y=\frac{\theta \kappa}{\sqrt{1+\theta}}$). The orthogonality of the family $(H_i e^{-\, \bullet^2/2})_{i\in \N}$, more precisely \eqref{eq:eigenfunc_hermite}, completes the proof for a general $\kappa$. The case $\kappa =0$ is then a direct consequence of $H_{2k}(0)=(-1)^k{(2k)!\over k!}$ and $H_{2k+1}(0)=0$.
\end{proof}

\section{Proof of Lemma \ref{lem:taupe-plus}}\label{B:proof-taupe}

Recall that  $\lambda=\sqrt{\ot\over 1+\ot}$. In order to prove Lemma \ref{lem:taupe-plus}, in view of \eqref{rename-gamma}, we may change the definition of  $\gamma_j^k$  as
$$
\gamma_j^k= \frac{1}{(1+j)^b}\lambda^j\lambda^k \sum_{l=0}^{\min(k,j)}\frac{\sqrt{\binom{j}{l}\binom{k}{l}}}{\max(1,(k-l)^{1/4})\times \max(1,(j-l)^{1/4})}(\ot^{-1})^l.
$$
We set
$$\tg_j^k:= \frac{1}{(1+j)^b}\lambda^j\lambda^k \sum_{l=0}^{\min(k,j)}\sqrt{\binom{j}{ l }\binom{k}{l}}(\ot^{-1})^l.
$$
In what follows, $C$ denotes a generic positive constant, independent of $j$ and $k$, though its value may change from one occurrence to the next. Observe also that, the assumptions in Theorem \ref{th:lin_pursuit} and the definition of $\ot$ in \eqref{shortcuts} insure that $\ot\in(0,\sqrt{5}-2)$.

\noindent \paragraph{(i) We first show that, for some $0<q<1$, $\ds\sum_{0\le j\le \frac{k}{2}}\gamma_j^k=\mathcal O(q^k)$ as $k\to+\infty$.} For $0\le j \le \frac{k}{2}$,  using the crude estimates $\ds \binom{j}{l}\le 2^j$ and $\ds \binom{k}{l}\le 2^k$, we get
\begin{align*}
   \ds \gamma_j^k  \le \tg_j^k \le (\sqrt{2}\lambda)^{j+k}\sum_{l=0}^{j}(\ot^{-1})^l= (\sqrt{2}\lambda)^{j+k}\frac{(\ot^{-1})^{j+1}-1}{\ot^{-1}-1}\le C (\sqrt{2}\lambda)^{j+k} (\ot^{-1})^{j}.
\end{align*}
Summing over $0 \le j\le  k/2 $ (if $k/2$ is not an integer, this means that we sum over $0 \le j\le \lfloor k/2 \rfloor $), we obtain
\begin{align*}
   \ds \sum_{0\le j \le  \frac{k}{2}} \gamma_j^k  \le  C (\sqrt{2}\lambda)^{k} \sum_{0\le j \le \frac{k}{2}}(\sqrt{2}\lambda \ot^{-1})^{j}\le C (\sqrt{2}\lambda)^{k}(\sqrt{2}\lambda \ot^{-1})^{k/2}=C\lp\frac{2\sqrt{2}\lambda}{1+\ot} \rp^{k/2}.
\end{align*}
As $\ot<\sqrt{5}-2$, we have $\ds\frac{2\sqrt{2}\lambda}{1+\ot} <1$, and the conclusion of point (i) follows.

\noindent \paragraph{(ii) We next show a similar estimate for $\ds\sum_{j\ge \frac{3k}{2}}\gamma_j^k$.} For $j\ge \frac{3k}{2},$ using $\ds \binom{j}{l}\le 2^j$ and Cauchy-Schwarz inequality, we get 
\begin{align*}
    \gamma_j^k & \le \tg_j^k \le \frac{\lambda^{j+k}}{(1+j)^b}\sqrt{2}^j \sum_{l=0}^k \sqrt{\binom{k}{l}}(\ot^{-1})^l \nonumber \\
    & \le \frac{\lambda^{j+k}}{(1+j)^b}\sqrt{2}^j  \lp \sum_{l=0}^k \binom{k}{l}(\ot^{-1})^l \sum_{l=0}^k (\ot^{-1})^l\rp^{1/2} \nonumber\\
    & =\frac{\lambda^{j+k}}{(1+j)^b}\sqrt{2}^j  \lp \lp \frac{1+\ot}{\ot}\rp^k \frac{(\ot^{-1})^{k+1}-1}{\ot^{-1}-1}\rp^{1/2} \nonumber\\
    & \le \frac{C}{(1+j)^b} (\sqrt{2}\lambda)^j (\ot^{-1})^{\frac{k+1}{2}} \le C (\ot^{-1})^{\frac{k}{2}} (\sqrt{2}\lambda)^j. 
\end{align*}
Summing over $j\ge 3k/2$, we obtain
\begin{align*}
    \ds\sum_{j\ge \frac{3k}{2}}\gamma_j^k \le  C (\ot^{-1})^{\frac{k}{2}} \frac{(\sqrt{2}\lambda)^{\frac{3k}{2}}}{1 - \sqrt{2}\lambda} = C \lp \frac{(\sqrt{2}\lambda)^3}{\ot}\rp^{k/2}=C \lp \frac{2\sqrt{2}\lambda}{1+\ot} \rp^{k/2},
\end{align*}
and the conclusion of point (ii) follows.

\noindent \paragraph{(iii) Last, we show that $\ds\sum_{\frac{k}{2}<j<\frac{3k}{2}}\gamma_j^k\le C\frac{1}{(1+k)^{b-\frac{1}{2}}}$.} We begin with an inequality on the product of binomial coefficients.

\begin{lem}\label{lem:ineqbino}
Let  $j, \, k,\, l \in \N$ such that $l\le \min(j,k)$. Then
$$
 \binom{j}{l}\binom{k}{l}\le \binom{\frac{j+k}{2}}{l}^2.
 $$
\end{lem}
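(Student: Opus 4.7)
The plan is to reduce the claim to a termwise application of the arithmetic-geometric mean inequality. Interpreting the binomial coefficient $\binom{x}{l}$ for a real $x$ through the standard falling-factorial definition
$$
\binom{x}{l}=\frac{1}{l!}\prod_{i=0}^{l-1}(x-i),
$$
one has $\binom{j}{l}\binom{k}{l}=\frac{1}{(l!)^{2}}\prod_{i=0}^{l-1}(j-i)(k-i)$ and $\binom{(j+k)/2}{l}^{2}=\frac{1}{(l!)^{2}}\prod_{i=0}^{l-1}\bigl(\tfrac{j+k}{2}-i\bigr)^{2}$. The factors $(l!)^{2}$ cancel, so it is enough to show that, for every $0\leq i\leq l-1$,
$$
(j-i)(k-i)\leq \Bigl(\tfrac{j+k}{2}-i\Bigr)^{2}.
$$

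Since $l\leq \min(j,k)$, both $j-i$ and $k-i$ are nonnegative for each such $i$, so the AM-GM inequality gives
$$
\sqrt{(j-i)(k-i)}\leq \frac{(j-i)+(k-i)}{2}=\frac{j+k}{2}-i,
$$
and squaring yields the required termwise bound. Multiplying over $i=0,\dots,l-1$ concludes the proof.

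There is essentially no obstacle: the only point that deserves a sentence of care is the interpretation of $\binom{(j+k)/2}{l}$ when $j+k$ is odd, which is handled by the falling-factorial definition, and the verification that all factors $\tfrac{j+k}{2}-i$ remain positive (guaranteed by $i\leq l-1\leq \min(j,k)-1 \leq \tfrac{j+k}{2}-1$). Thus the entire argument is a one-line reduction to AM-GM applied independently to each of the $l$ pairs $(j-i,k-i)$.
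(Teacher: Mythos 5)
Your proof is correct and takes a genuinely more elementary route than the paper's. The paper proves the inequality by showing that $x \mapsto \binom{x}{l}$ is log-concave on $(l-1,+\infty)$: writing $\binom{x}{l}=\frac{\Gamma(x+1)}{\Gamma(l+1)\Gamma(x-l+1)}$, it computes $(\ln f)''(x)=\Psi'(x+1)-\Psi'(x-l+1)$ and invokes the concavity of the digamma function $\Psi$ to conclude $(\ln f)''\leq 0$, then applies midpoint concavity at $j$ and $k$. Your argument replaces all of this with a termwise AM--GM: since $\binom{x}{l}=\frac{1}{l!}\prod_{i=0}^{l-1}(x-i)$, it suffices to observe that $(j-i)(k-i)\leq\bigl(\frac{j+k}{2}-i\bigr)^2$ for each $i$, which holds because both factors are nonnegative when $i\leq l-1\leq\min(j,k)-1$. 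This avoids special-function theory entirely and is self-contained; the paper's approach buys the more general statement of full log-concavity (not just the midpoint inequality at integer endpoints), but for the lemma as stated your argument is simpler and equally rigorous.
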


\begin{proof}
    We define the function $\ds f: \ x\mapsto \binom{x}{l}:=\frac{\Gamma(x+1)}{\Gamma(l+1)\Gamma(x-l+1)}$ for $x> l-1$. Then, $$(\ln f)''(x)=\Psi'(x+1)-\Psi'(x-l+1),$$with $\ds \Psi:=\frac{\Gamma'}{\Gamma}$ the digamma function. As $\Psi$ is  concave on $(0,+\infty)$, $(\ln f)''(x) \le 0$ for all $x> l-1$. This implies that $\ln f$ is concave on $(l-1,+\infty)$, which in turns implies that $$
    \ln f\lp \frac{j+k}{2}\rp \ge \frac{1}{2}\ln f (j)+\frac{1}{2}\ln f (k),
    $$
    which provides the result.
\end{proof}

\Bk

Using Lemma \ref{lem:ineqbino}, we obtain
\begin{align*}
    \tg_j^k & \le  \frac{1}{(1+j)^b}\lambda^j\lambda^k \sum_{l=0}^{\min(k,j)}\binom{\frac{j+k}{2}}{ l }(\ot^{-1})^l  \\
    & \le  \frac{1}{(1+j)^b}\lambda^j\lambda^k \sum_{l=0}^{\frac{j+k}{2}}\binom{\frac{j+k}{2}}{ l }(\ot^{-1})^l \\
    & =\frac{1}{(1+j)^b} \lp \frac{\ot}{1+\ot}\rp^{\frac{j+k}{2}}\lp \frac{1+\ot}{\ot}\rp^{\frac{j+k}{2}} = \frac{1}{(1+j)^b}.
\end{align*}
As $\frac 12 k\leq j\leq \frac 32 k$, we get
\begin{equation} \label{eq:ineg_gamtilde}
    \tg_j^k \le \frac{2^b}{(1+k)^b} \le  \frac{C}{(1+k)^b}.
\end{equation}
In the sequel, we define
$$
a_{j,k,l}:=\sqrt{\binom{j}{l}\binom{k}{l}}(\ot^{-1})^l,
$$
so that  we may write
$$
\gamma_j^k= \frac{1}{(1+j)^b}\lambda^j\lambda^k \sum_{l=0}^{\min(k,j)}\frac{a_{j,k,l}}{\max(1,(k-l)^{1/4})\max(1,(j-l)^{1/4})},
$$
and
$$
\tg_j^k= \frac{1}{(1+j)^b}\lambda^j\lambda^k \sum_{l=0}^{\min(k,j)}a_{j,k,l}.
$$
Using Cauchy–Schwarz  inequality, we note that 
\begin{equation} \label{eq:jensen}
    \sum_{l=0}^{\min(k,j)-1}\frac{a_{j,k,l}}{(k-l)^{1/4} (j-l)^{1/4}} \le \lp\sum_{l=0}^{\min(k,j)-1}a_{j,k,l} \rp^{1/2}\lp \sum_{l=0}^{\min(k,j)-1}\frac{a_{j,k,l}}{(k-l)^{1/2} (j-l)^{1/2}}  \rp^{1/2}.
\end{equation}
For $l\le \frac{k}{4}$, we have $(k-l)^{1/2} (j-l)^{1/2}\ge(k-k/4)^{1/2} (k/2-k/4)^{1/2} \ge C k$. Thus, 
\begin{equation} \label{eq:sum1}
    \sum_{l=0}^{\lfloor k/4 \rfloor}\frac{a_{j,k,l}}{(k-l)^{1/2} (j-l)^{1/2}}\le \frac{C}{k} \lp\sum_{l=0}^{\lfloor k/4 \rfloor}a_{j,k,l} \rp.
\end{equation}
Next, for $l \ge \lfloor k/4 \rfloor +1$ and $l\le \min(j,k)-1$ we have
$$  \sqrt{\frac{\binom{j}{l} }{j-l}}  =  \sqrt{\frac{\binom{j}{l-1} }{j-l} \frac{j-l+1}{l}} = \sqrt{\binom{j}{l-1} \frac{1}{l} \lp1+ \frac{1}{j-l} \rp }\le \frac{C}{\sqrt{k}} \sqrt{\binom{j}{l-1}},
$$
and, similarly,
$$  \sqrt{\frac{\binom{k}{l} }{k-l}} \le \frac{C}{\sqrt{k}} \sqrt{\binom{k}{l-1}}.
$$
Thus, 
\begin{equation} \label{eq:sum2}
    \sum_{l=\lfloor k/4 \rfloor+1}^{\min(k,j)-1}\frac{a_{j,k,l}}{(k-l)^{1/2} (j-l)^{1/2}}\le \frac{C}{k} \lp \sum_{l=\lfloor k/4 \rfloor+1}^{\min(k,j)-1} a_{j,k,l-1} \rp= \frac{C}{k} \lp \sum_{l=\lfloor k/4 \rfloor}^{\min(k,j)-2} a_{j,k,l} \rp.
\end{equation}
Adding \eqref{eq:sum1} and \eqref{eq:sum2}, we obtain
\begin{equation} \label{eq:sum3}
    \sum_{l=0}^{\min(k,j)-1}\frac{a_{j,k,l}}{(k-l)^{1/2} (j-l)^{1/2}}\le \frac{C}{k} \lp a_{j,k,\lfloor k/4 \rfloor}+\sum_{l=0}^{\min(k,j)-2} a_{j,k,l} \rp \le \frac{C}{k} \lp \sum_{l=0}^{\min(k,j)-1} a_{j,k,l} \rp .
\end{equation}
Plugging this into \eqref{eq:jensen}, we get
\begin{equation} \label{eq:jensen2}
    \sum_{l=0}^{\min(k,j)-1}\frac{a_{j,k,l}}{(k-l)^{1/4} (j-l)^{1/4}} \le  \frac{C}{\sqrt{k}} \lp \sum_{l=0}^{\min(k,j)-1}a_{j,k,l}  \rp.
\end{equation}
Coming back to $\gamma_j^k$, we have
\begin{align*}
    \gamma_j^k & = \frac{1}{(1+j)^b}\lambda^j\lambda^k \sum_{l=0}^{\min(k,j)}\frac{a_{j,k,l}}{\max(1,(k-l)^{1/4}) \max(1,(j-l)^{1/4})} \\
    & \le \frac{1}{(1+j)^b}\lambda^j\lambda^k \left[a_{j,k,\min(j,k)}+\sum_{l=0}^{\min(k,j)-1}\frac{a_{j,k,l}}{(k-l)^{1/4}(j-l)^{1/4}}\right] \\
    & \le \frac{1}{(1+j)^b}\lambda^j\lambda^k \left[a_{j,k,\min(j,k)}+\frac{C}{\sqrt{k}} \lp \sum_{l=0}^{\min(k,j)-1}a_{j,k,l} \rp\right]
    \\
    & \le \frac{C}{\sqrt{k}} \frac{1}{(1+j)^b}\lambda^j\lambda^k \lp \sum_{l=0}^{\min(k,j)-1}a_{j,k,l} \rp \\ & \le \frac{C}{\sqrt{k}} \tg_j^k,
\end{align*}
where we have used \eqref{eq:jensen2} and $a_{j,k,\min(j,k)} \le \frac{C}{\sqrt{k}}a_{j,k,\min(j,k)-1}$ for $\frac 12 k\leq j\leq \frac 32 k$ and $k\ge 1$. With \eqref{eq:ineg_gamtilde}, we obtain 
\begin{equation}
     \gamma_j^k \le \frac{C}{(1+k)^{b+1/2}}.
\end{equation}
Finally, this shows that 
$\ds\sum_{\frac{k}{2}<j<\frac{3k}{2}}\gamma_j^k\le C\frac{1}{(1+k)^{b-\frac{1}{2}}}$, which concludes the proof of point (iii) and of Lemma \ref{lem:taupe-plus}. \qed

\paragraph*{Acknowledgement.}
Matthieu Alfaro is supported by  the \textit{région Normandie} project BIOMA-NORMAN 21E04343 and the ANR project DEEV ANR-20-CE40-0011-01. Florian Lavigne would like to acknowledge the \textit{région Normandie} for the financial support of his Post-Doctoral position.

\bibliographystyle{siam}
\bibliography{biblio_clean}

\begin{thebibliography}{10}

\bibitem{AlfBer17}
{\sc M.~Alfaro, H.~Berestycki, and G.~Raoul}, {\em The effect of climate shift
  on a species submitted to dispersion, evolution, growth, and nonlocal
  competition}, SIAM Journal on Mathematical Analysis, 49 (2017), pp.~562--596.

\bibitem{AlfCar17}
{\sc M.~Alfaro and R.~Carles}, {\em Replicator-mutator equations with quadratic
  fitness}, Proceedings of the American Mathematical Society, 145 (2017),
  pp.~5315--5327.

\bibitem{AlfPel21}
{\sc M.~Alfaro and G.~Peltier}, {\em Populations facing a nonlinear
  environmental gradient: steady states and pulsating fronts}, Mathematical
  Models and Methods in Applied Sciences, 32 (2022), pp.~209--290.

\bibitem{AlfVer18}
{\sc M.~Alfaro and M.~Veruete}, {\em Evolutionary branching via
  replicator-mutator equations}, Journal of Dynamics and Differential
  Equations,  (2018), pp.~1--24.

\bibitem{Bell19}
{\sc G.~Bell}, {\em The masterpiece of nature: the evolution and genetics of
  sexuality}, Routledge, 2019.

\bibitem{Bon-Cla-90}
{\sc S.~S. Bonan and D.~S. Clark}, {\em Estimates of the {H}ermite and the
  {F}reud polynomials}, Journal of Approximation Theory, 63 (1990),
  pp.~210--224.

\bibitem{Bor-Bor-86}
{\sc D.~Borwein and J.~M. Borwein}, {\em A note on alternating series in
  several dimensions}, American Mathematical Monthly, 93 (1986), pp.~531--539.

\bibitem{BroChap14}
{\sc M.~A. Brockhurst, T.~Chapman, K.~C. King, J.~E. Mank, S.~Paterson, and
  G.~D. Hurst}, {\em Running with the red queen: the role of biotic conflicts
  in evolution}, Proceedings of the Royal Society B: Biological Sciences, 281
  (2014), p.~20141382.

\bibitem{BucAsh22}
{\sc L.~J. Buckingham and B.~Ashby}, {\em Coevolutionary theory of hosts and
  parasites}, Journal of Evolutionary Biology, 35 (2022), pp.~205--224.

\bibitem{CalHen22}
{\sc V.~Calvez, B.~Henry, S.~M{\'e}l{\'e}ard, and V.~C. Tran}, {\em Dynamics of
  lineages in adaptation to a gradual environmental change}, Annales Henri
  Lebesgue, 5 (2022), pp.~729--777.

\bibitem{CarNad20}
{\sc C.~Carr{\`e}re and G.~Nadin}, {\em Influence of mutations in
  phenotypically-structured populations in time periodic environment}, Discrete
  \& Continuous Dynamical Systems-B, 22 (2020).

\bibitem{Car-1872}
{\sc L.~Carroll}, {\em Through the Looking-Glass}, Macmillan, 1872.

\bibitem{ChaFer06}
{\sc N.~Champagnat, R.~Ferri{\`e}re, and S.~M{\'e}l{\'e}ard}, {\em Unifying
  evolutionary dynamics: from individual stochastic processes to macroscopic
  models}, Theoretical Population Biology, 69 (2006), pp.~297--321.

\bibitem{DawKreb79}
{\sc R.~Dawkins and J.~R. Krebs}, {\em Arms races between and within species},
  Proceedings of the Royal Society of London. Series B. Biological Sciences,
  205 (1979), pp.~489--511.

\bibitem{Erd-60}
{\sc A.~Erd\'elyi}, {\em Asymptotic forms for {L}aguerre polynomials}, The
  Journal of the Indian Mathematical Society. New Series, 24 (1960),
  pp.~235--250.

\bibitem{FigMir19}
{\sc S.~Figueroa~Iglesias and S.~Mirrahimi}, {\em Selection and mutation in a
  shifting and fluctuating environment}, Communications in Mathematical
  Sciences, 19 (2021), pp.~1761--1798.

\bibitem{Gav97}
{\sc S.~Gavrilets}, {\em Coevolutionary chase in exploiter--victim systems with
  polygenic characters}, Journal of Theoretical Biology, 186 (1997),
  pp.~527--534.

\bibitem{HamLav20}
{\sc F.~Hamel, F.~Lavigne, G.~Martin, and L.~Roques}, {\em Dynamics of
  adaptation in an anisotropic phenotype-fitness landscape}, Nonlinear
  Analysis. Real World Applications., 54 (2020), pp.~103107, 33.

\bibitem{HamLavRoq20}
{\sc F.~Hamel, F.~Lavigne, and L.~Roques}, {\em Adaptation in a heterogeneous
  environment {I}: persistence versus extinction}, Journal of Mathematical
  Biology, 83 (2021), pp.~Paper No. 14, 42.

\bibitem{Kopp06}
{\sc M.~Kopp and S.~Gavrilets}, {\em Multilocus genetics and the coevolution of
  quantitative traits}, Evolution, 60 (2006), pp.~1321--1336.

\bibitem{Lav23}
{\sc F.~Lavigne}, {\em Adaptation of an asexual population with environmental
  changes}, Mathematical Modelling of Natural Phenomena, 18 (2023).

\bibitem{LorChi15}
{\sc T.~Lorenzi, R.~H. Chisholm, L.~Desvillettes, and B.~D. Hughes}, {\em
  Dissecting the dynamics of epigenetic changes in phenotype-structured
  populations exposed to fluctuating environments}, Journal of Theoretical
  Biology, 386 (2015), pp.~166--176.

\bibitem{MirGan20}
{\sc S.~Mirrahimi and S.~Gandon}, {\em Evolution of specialization in
  heterogeneous environments: equilibrium between selection, mutation and
  migration}, Genetics, 214 (2020), pp.~479--491.

\bibitem{NuiRid07}
{\sc S.~L. Nuismer, B.~J. Ridenhour, and B.~P. Oswald}, {\em Antagonistic
  coevolution mediated by phenotypic differences between quantitative traits},
  Evolution, 61 (2007), pp.~1823--1834.

\bibitem{PueTor18}
{\sc D.~Puertas-Centeno, I.~V. Toranzo, and J.~S. Dehesa}, {\em Exact
  {R}{\'e}nyi entropies of d-dimensional harmonic systems}, The European
  Physical Journal Special Topics, 227 (2018), pp.~345--352.

\bibitem{RoqPat20}
{\sc L.~Roques, F.~Patout, O.~Bonnefon, and G.~Martin}, {\em Adaptation in
  general temporally changing environments}, SIAM Journal on Applied
  Mathematics, 80 (2020), pp.~2420--2447.

\bibitem{Sas00}
{\sc A.~Sasaki}, {\em Host-parasite coevolution in a multilocus gene-for-gene
  system}, Proceedings of the Royal Society of London. Series B: Biological
  Sciences, 267 (2000), pp.~2183--2188.

\bibitem{Sir-et-al-15}
{\sc M.~Sironi, R.~Cagliani, D.~Forni, and M.~Clerici}, {\em Evolutionary
  insights into host--pathogen interactions from mammalian sequence data},
  Nature Reviews Genetics, 16 (2015), pp.~224--236.

\bibitem{Van73}
{\sc L.~Van~Valen}, {\em A new evolutionary law}, Evolutionary Theory, 1
  (1973), pp.~1--30.

\bibitem{Zei-86}
{\sc E.~Zeidler}, {\em Nonlinear functional analysis and its applications.
  {I}}, Springer-Verlag, New York, 1986.
\newblock Fixed-point theorems, Translated from the German by Peter R. Wadsack.

\end{thebibliography}

\end{document}